\newcommand{\addresseshere}{%
  \enddoc@text\let\enddoc@text\relax
}
\theoremstyle{plain}
\newtheorem{theorem}{Theorem}[section]
\newtheorem{proposition}[theorem]{Proposition}
\newtheorem{conjecture}[theorem]{Conjecture}
\newtheorem{lemma}[theorem]{Lemma}
\newtheorem{corollary}[theorem]{Corollary}
\theoremstyle{definition}
\newtheorem{remark}[theorem]{Remark}
\newtheorem{definition}[theorem]{Definition}
\newtheorem{observation}[theorem]{Observation}
\newtheorem{example}[theorem]{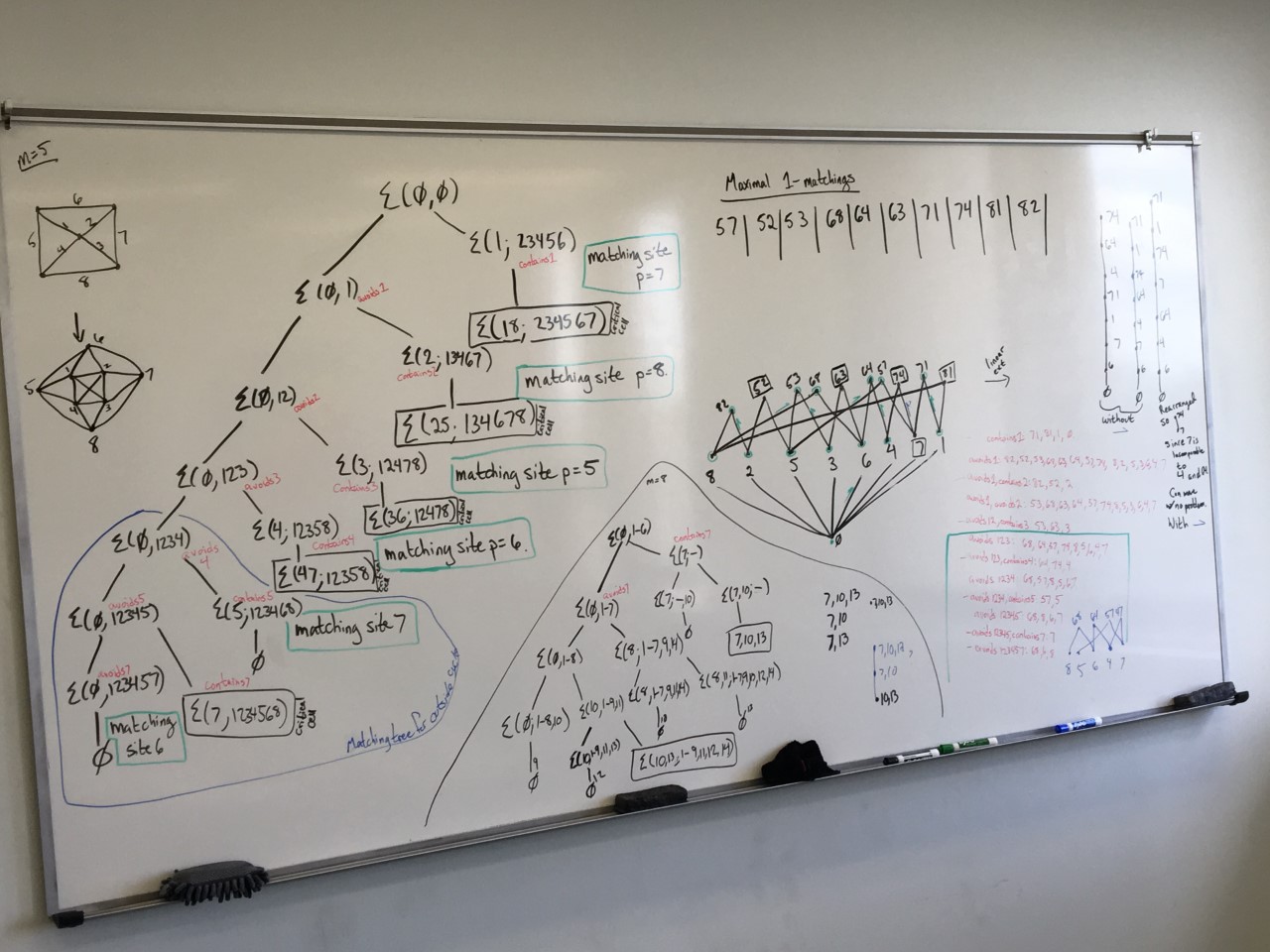}
\newtheorem{question}[theorem]{Question}
\definecolor{green}{RGB}{34, 139, 34}
\newcommand{\jnote}[1]{\par \noindent
  \framebox{\begin{minipage}[c]{0.95 \textwidth}\color{green} JULIE'S NOTE:
      #1 \color{black}\end{minipage}}\par}
\newcommand\commentout[1]{}
\begin{document}

\title{$2$-matching complexes}

\author{Julianne Vega}
\address{Department of Mathematics\\
         Kennesaw State University\\
         Marietta, GA 30060}
\email{jvega30@kennesaw.edu}

\date{\today}


\begin{abstract}
A $2$-matching complex is a simplicial complex which captures the relationship between $2$-matchings of a graph. In this paper, we will use discrete Morse Theory and the Matching Tree Algorithm to prove homotopical results. We will consider a class of graphs for which the homotopy type of the $2$-matching complex transforms from a sphere to a point with the addition of leaves. We end the paper by defining $k$-matching sequences and looking at the $1$- and $2$-matching complexes of wheel graphs and perfect caterpillar graphs. 

\end{abstract}

\maketitle

\section{Introduction} 

Matchings and matching complexes are objects that have been well studied, for example \cite{Bjorner_etal, Jakob, Shareshian_Wachs, Ziegler}. Matching complexes were introduced in the 70's through work done by Brown and Quillen as a way to study the structure of subgroups and provide interesting connections to several areas in mathematics. A \emph{matching complex} of a graph $G$, denoted $M_1(G)$, is a simplicial complex with vertices given by edges of $G$ and faces given by matchings of $G$, where a matching is a subset of edges $H \subseteq E(G)$ such that any vertex $ v \in V(H)$ has degree at most $1$. 
Some matching complexes that have been studied in detail are the full matching complex $M_1(K_n)$, where $K_n$ is the complete graph on $n$ vertices, and the chessboard complex $M_1(K_{m,n})$, where $K_{m,n}$ is the complete bipartite graph with block size $m$ and $n$. Results about $M_1(K_n)$ and $M_1(K_{m,n})$ include connectivity bounds and rational homology. For a general survey on matching complexes see \cite{Wachs}. The homotopy type of matching complexes is a bit more mysterious. The homotopy type of matching complexes for paths and cycles \cite{Kozlov}, for forests \cite{Marietti_Testa_forests}, and for the $\left( \lfloor \frac{n+m+1}{3} -1\rfloor \right) $ skeleton of  $M_1(K_{m,n})$ for all $m,n$ and $M_1(K_{m,n})$ when $2m-1 \leq n$~\cite{Ziegler} is known to be either a point, sphere, or wedge of spheres, but beyond these classes the homotopy type of matching complexes is unclear. In fact, we know that torsion arises in higher homology groups of $M_1(K_n)$ and $M_1(K_{m,n})$ \cite{Shareshian_Wachs}.

 In \cite{Jakob}, Jonsson defines the bounded degree complex $BD_n^\lambda(G)$ with $\lambda = (\lambda_1, \lambda_2, \dots ,\lambda_n)$ to be the complex of subgraphs of a graph $G$ with $n$ vertices such that the degree of vertex $x_i$ is at most $\lambda_i$ in the subgraph, which  is a natural generalization of matching complexes. When $\lambda = (d, \dots, d)$ we write $BD_n^d(G) := BD_n^{(d,d,..,d)}(G)$. The bounded degree complex $BD_n^1(K_n)$ is the matching complex on complete graphs, that is $M_1(K_n)$.  For $d \geq 2,~BD_n^d(G)$ is the \emph{$d$-matching complex} on $G$ with $0$-simplices given by edges in $G$ and faces by $d$-matchings in $G$, where a $d$-matching is a subset of edges $H \subseteq E(G)$ such that any vertex $v \in V(H)$ has degree at most $d$. Bounded degree complexes are generalizations of matching complexes that involve relaxing the incidence conditions on the vertices. Such a generalization may provide insight into the complexity of matching complexes. For example, in Section~\ref{sec:cat} we use bounded degree complexes to inductively study $k$-matching complexes. Jonsson primarily focuses on the connectivity of $BD_n^\lambda(K_n)$ the bounded degree complex of the complete graph and $\overline{BD}_n^\lambda(K_n)$ the bounded degree complex of the complete graphs with loops allowed at each vertex.  For a further survey of bounded degree complexes see~\cite{Wachs}.

In Section~\ref{sec:contractible}, we connect our results to these connectivity results. The focus of this paper will be the topology of $M_2(G) := BD_n^2(G)$ which we call the $2$-matching complex of $G$. Since a matching of $G$ is also a $2$-matching of $G$, the matching complex of $G$ is a subcomplex of the $2$-matching complex of $G$, with $M_1(G) \subset M_2(G)$. 

\subsection{Our contributions}
In this paper, we will use discrete Morse Theory and the Matching Tree Algorithm to prove homotopical results. In Section~\ref{sec:background}, we provide the necessary combinatorial and topological background for these techniques. In Section~\ref{sec:contractible}, we take a preliminary look at $2$-matching complexes and consider a class of graphs for which the homotopy type of the $2$-matching complex is contractible. Then, we look at graphs whose homotopy type of the $2$-matching complex changes from a sphere to a point with the addition of leaves. We end this section with a constructible algorithm to maximize the number of additional leaves that can be added to a certain family of graphs without changing the homotopy type of $M_2(G)$. In Section~\ref{sec:wheel}, we define $k$-matching sequences and look at wheel graphs as a first example. We conclude with perfect caterpillar graphs and future directions.



\section{Background}\label{sec:background}

\begin{definition}
An \emph{(abstract) simplicial complex} $\Delta$ on a set $X$ is a collection of subsets of $X$ such that
\begin{enumerate}
\item[(i)]$ \emptyset \in \Delta$ 
\item[(ii)] If $\sigma \in \Delta$ and $\tau \subseteq \sigma$, then $\tau \in \Delta$.
\end{enumerate}
\end{definition} 

The elements of a simplicial complex are called \emph{faces} and an \emph{$n-simplex$} is the collection of all subsets of $[n+1]$. A subcomplex $\Delta'$ of a complex $\Delta$ is a subcollection of $\Delta$ which satisfies (i) and (ii). For disjoint simplicial complexes $\Delta$ and $\Delta'$, the topological \emph{join} is $\Delta \ast \Delta' = \{ \sigma \cup \sigma': \sigma \in \Delta, \sigma' \in \Delta' \}$. A simplicial complex $\Delta$ is said to be a \emph{cone} with \emph{cone point} $\{x\} \in \Delta$ if for every face $\sigma \in \Delta$ we have $\sigma \cup \{x\} \in \Delta$, that is the simplicial complex $\Delta' \ast x$ for some $\Delta'$.  Note that every cone is contractible. The suspension of a space $\Delta$ is denoted $\Sigma(\Delta)$ and is the join of $\Delta$ with $2$ discrete points. A simplicial complex $\Delta$ is $k$-connected if the higher homotopy groups $\pi_i$ are trivial for $0<i< k$. The connectivity of $\Delta$ is the largest integer $k$ for which $\Delta$ is $k$-connected. 

We will let $G$ be a finite, simple graph with vertex set $V(G)$ and edge set $E(G)$. For a vertex $v \in V(G)$, the \emph{degree of $v$}, deg($v$) is the number of edges incident to $v$. If $V(G) \cap V(H) = \{x\}$, the 
\emph{wedge sum} $G \underset{x}{\vee} H$ of $G$ with $H$ over $x$ is the graph with vertex set $V(G) \cup V(H)$ and edge set $E(G) \cup E(H)$. Let $\{u,v\} \in E(G)$ and $w$ a new vertex not in $V(G)$. The \emph{subdivision of $\{u,v\} \in G$} is obtained by deleting $\{u,v\}$ and adding $w$ to $V(G)$ and $\{u,w\}, \{w,v\}$ to $ E(G)$. For an edge set $H \subseteq E(G)$, let $V(H)$ denote the set of vertices supported by $H$. That is, $V(H) := \bigcup\limits_{e \in H}V(e)$. A vertex $v \in V(G)$ is a leaf if its neighborhood contains exactly $1$ vertex. For a graph $G$ with $v \in V(G)$, \emph{attaching a leaf to $v$ in $G$} refers to the process of adding a new vertex $w$ to $V(G)$ and $\{v,w\}$ to $E(G)$. Given a graph $G$ with $2$ leaves $u,v$ and edges $\{v_1,u\}$ and $\{v_2,v\}$, define $G_{(u,v)}$ to be the graph obtained by identifying $u$ and $v$, labeled $uv$. That is $E(G_{(u,v)}) = E(G) \smallsetminus \{\{v_1,u\}, \{v_2,v\}\} \cup \{\{v_1,uv\}, \{v_2,uv\}\}$ and $V(G_{(u,v)}) = V(G) \smallsetminus \{u,v\} \cup \{uv\}$. 

\begin{definition} 
For a graph $G = (V(G), E(G))$ with max degree $3$, the \emph{clawed graph of $G$}, denoted $CG$ is the graph obtained by subdividing every $e \in E(G)$ and attaching a (possibly empty) set of leaves to every $v \in V(G)$ so that deg$(v) = 3$ for all $v$. The graph $G$ is called the \emph{core} of $CG$.  See Example~\ref{fig:claw_graph} for an example.
\end{definition}
If $|E(G)|$ and $|V_{\leq 2}(G)|$ denote the number of edges and the number of vertices with degree less than or equal to $2$ in a graph $G$, respectively, and $L$ is the number of leaves of $G$, the process of clawing  $G$ introduces $|E(G)| + |V_{\leq 2}(G)| + L$ new vertices and $|V_{\leq 2}(G)| + L$ new edges.
\begin{example}[Clawing a graph]
See figure~\ref{fig:claw_graph}. (A) Begin with a graph $G$, (B) Subdivide each edge (depicted with open circles), (C) attach a set of leaves to each vertex of $G$ so that deg($v$) $= 3$ for all $v \in V(G)$. Notice $|E(G)| = 4 = |V_{\leq2}(G)|$ and $L = 3$ so the total number of vertices added is $11$ and the total number of new (leaf) edges is $7$. 
\begin{figure}[h]
\begin{center}
\begin{tikzpicture}[scale = .75]

\filldraw (-1, 5) circle (0.1cm);
\filldraw (-1, 3) circle (0.1cm);
\filldraw (2, 4) circle (0.1cm);
\filldraw (0, 4) circle (0.1cm);
\filldraw (4,4) circle (0.1cm);

\draw (-1, 5) -- (0, 4) -- (2, 4) --(4,4);
\draw (-1, 3) -- (0, 4) ;

\node at (2, 2) {(A)};


\draw  (7,5) -- (8, 4);
\draw  (7,3) -- (8, 4) -- (12,4);

\filldraw (7, 5) circle (0.1cm);
\filldraw (7, 3) circle (0.1cm);
\filldraw[fill=white] (7.5, 3.5) circle (0.1cm);
\filldraw[fill=white] (7.5, 4.5) circle (0.1cm);
\filldraw (8, 4) circle (0.1cm);
\filldraw[fill=white] (9, 4) circle (0.1cm);
\filldraw (10,4) circle (0.1cm);
\filldraw[fill=white] (11,4) circle (0.1cm);
\filldraw (12,4) circle (0.1cm);

\node at (10, 2) {(B)};

\draw (6, 2) -- (7,1) -- (8, 0);
\draw (6, -2) -- (7,-1) -- (8, 0) -- (12,0) -- (13,1);
\draw (6, 0.5) -- (7,1) ;
\draw (6, -0.5) -- (7,-1) ;

\draw (10,0) -- (10,1) ;
\draw (12,0) -- (13,-1) ;

\filldraw (7, 1) circle (0.1cm);
\filldraw (7, -1) circle (0.1cm);
\filldraw[fill=white] (7.5, -0.5) circle (0.1cm);
\filldraw[fill=white] (7.5, 0.5) circle (0.1cm);
\filldraw (8, 0) circle (0.1cm);
\filldraw[fill=white] (9, 0) circle (0.1cm);
\filldraw (10,0) circle (0.1cm);
\filldraw[fill=white] (11,0) circle (0.1cm);
\filldraw (12,0) circle (0.1cm); 
\filldraw[fill=white] (13,1) circle (0.1cm); 
\filldraw[fill=white] (13,-1) circle (0.1cm);
\filldraw[fill=white] (10,1) circle (0.1cm); 
\filldraw[fill=white] (6,0.5) circle (0.1cm); 
\filldraw[fill=white] (6,-0.5) circle (0.1cm); 
\filldraw[fill=white] (6,2) circle (0.1cm); 
\filldraw[fill=white] (6,-2) circle (0.1cm);

\node at (10, -2) {(C)};

\end{tikzpicture}
\end{center}
\label{fig:claw_graph}
\end{figure}

\end{example}

\begin{definition}
An \emph{induced claw unit} of a graph is a $K_{1,3}$ subgraph with $1$ vertex of degree $3$ in $G$ and $3$ vertices of degree less than or equal to $2$ in $G$ (See figure~\ref{fig:claw_unit}.)
\end{definition}

Notice that a clawed graph decomposes into induced claw units that pairwise intersect at 1 vertex. As we will see in Section~\ref{sec:contractible} not all graphs with maximum degree 3 can decompose into claw units in this manner. For example $WC^d_3$ the fully whiskered $3$-cycle .  

We will be interested in deleting an induced claw unit in a graph. To do so, we consider an induced claw unit $c$ to be defined by the unique degree $3$ vertex, call it $v$. We abuse notation and use $G \smallsetminus c$ to denote the vertex deleted subgraph of $G \smallsetminus \{v\}$, the graph obtained by deleting $v$ and all the edges incident to it. (See figure~\ref{fig:claw_unit}.)


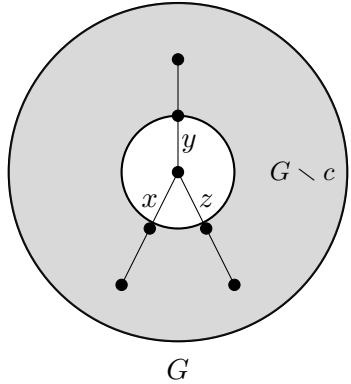
\begin{figure}[h]
\begin{center}
\begin{tikzpicture}[scale = .75]

\draw[thick] (0,0) circle (3);
\filldraw[thick, fill=gray, opacity=.3] (0,0) circle (3);

\filldraw[thick, fill=white] (0,0) circle (1);
\filldraw (0,0) circle (0.1cm);
\filldraw (1,-2) circle (0.1cm);
\filldraw (-1,-2) circle (0.1cm);
\filldraw (0,2) circle (0.1cm);
\filldraw (0,1) circle (0.1cm);
\filldraw (0.5,-1) circle (0.1cm);
\filldraw (-0.5,-1) circle (0.1cm);

\draw (0,2) -- (0,0) -- (1,-2);
\draw (0,0) -- (-1,-2);

\node at (2.2, 0) {\small $G \smallsetminus c$};
\node at (0.2, 0.5) {$y$};
\node at (0.5, -0.5) {$z$};
\node at (-0.5, -0.5) {$x$};
\node at (0, -3.5) {$G$};

\end{tikzpicture}
\end{center}
\caption{ The edge set $\{x,y,z\}$ defines an induced claw unit, call it $c$, of graph $G$. The shaded region is the graph $G \smallsetminus c$. }
\label{fig:claw_unit}
\end{figure}


\begin{definition}
A \emph{2-matching} of a graph $G$ is a subset of edges $H \subseteq E(G)$ such that any vertex $v \in V(H)$ has degree at most $2$. 
\end{definition}

\begin{definition}
A \emph{$2$-matching complex} of a graph $G$, denoted $M_2(G)$ is a simplicial complex with vertices given by edges of $G$ and faces given by $2$-matchings of $G$. 
\end{definition}

\begin{example}\label{ex:complexes}
See Figure~\ref{fig:complexes} consisting of the graph $G$, its matching complex $M_1(G)$, and its  $2$-matching complex $M_2(G)$. The $2$-matching complex of $G$ consists of 5 maximal faces. Namely, (1) $\{\textbf{a,c,d}\}$, (2) $\{\textbf{a,c,e}\}$, (3) $\{\textbf{a,b,d,e}\}$, (4)  $\{\textbf{b,c,d}\}$, (5) $\{\textbf{b,c,e}\}$.These maximal faces form a simplicial complex that is homotopy equivalent to $S^2$ a $2$-sphere. Notice that $M_1(G) \subseteq M_2(G)$.

\begin{figure}[!hbt]
\begin{center}
\begin{tikzpicture}[scale = .75]

\filldraw (-1, 1) circle (0.1cm);
\filldraw (-1, -1) circle (0.1cm);
\filldraw (2, 0) circle (0.1cm);
\filldraw (0, 0) circle (0.1cm);
\filldraw (3, 1) circle (0.1cm);
\filldraw (3, -1) circle (0.1cm);

\draw[thick, fill=gray, opacity=.8] (14,1)-- (13,0) -- (14,-1) -- (15,0) -- (14,1);
\filldraw (14, 1) circle (0.1cm);
\filldraw (13,0) circle (0.1cm);
\filldraw (14,-1) circle (0.1cm);
\filldraw (15,0) circle (0.1cm);
\filldraw (14, 3) circle (0.1cm);

\draw (13,0) -- (15,0);

\draw[thick, fill=gray, opacity=.5] (14,1)-- (14,3) -- (15,0) -- (14,1);

\draw[thick, fill=gray, opacity=.5] (14,1)-- (14,3) -- (13,0) -- (14,1);

\draw[thick, fill=gray, opacity=.3] (14,-1)-- (14,3) -- (15,0) -- (14,-1);

\draw[thick, fill=gray, opacity=.3] (14,-1)-- (14,3) -- (13,0) -- (14,-1);

\node[below] at (14, -2) {$M_2(G)$};

\node at (12.7, 0) {$a$};
\node at (15.3, 0) {$b$};
\node at (14.3, 1.15) {$d$};
\node at (14,3.3) {$c$};
\node at (14, -1.35) {$e$};

\draw (-1, 1) -- (0, 0) -- (2, 0) --(3, 1);
\draw (2, 0) -- (3, -1);
\draw (-1, -1) -- (0, 0) ;

\node[below] at (1, -2) {$G$};

\node at (-0.35,0.75) {$\textbf{a}$};
\node at (2.35,0.75) {$\textbf{d}$};
\node at (-0.35,-.75) {$\textbf{b}$};
\node at (1,0.25) {$\textbf{c}$};
\node at (2.35,-0.75) {$\textbf{e}$};


\filldraw (6, 0) circle (0.1cm);
\filldraw (7, 1) circle (0.1cm);
\filldraw (7, -1) circle (0.1cm);
\filldraw (8, 0) circle (0.1cm);
\filldraw (7, 3) circle (0.1cm);

\node at (5.7, 0) {$a$};
\node at (8.3, 0) {$b$};
\node at (7, 1.35) {$d$};
\node at (7, 3.3) {$c$};
\node at (7, -1.35) {$e$};

\draw (6, 0) -- (7, 1) -- (8, 0) --(7, -1) -- (6, 0);

\node[below] at (7, -2) {$M_1(G)$};

\end{tikzpicture}
\end{center}
\caption{}\label{fig:complexes}

\end{figure}

\end{example}


\subsection{Discrete Morse Theory} \label{Sec:DiscreteMorseTheory}
Robin Forman developed Discrete Morse Theory as a tool to study the homotopy type of simplicial complexes \cite{Forman}. The underlying idea of the theory is to pair faces in a simplicial complex to give rise to a sequence of collapses that yields a homotopy equivalent cell complex.

\begin{definition}\label{def:partialmatching}

A \textit{partial matching} in a poset $P$ is a partial matching in the underlying graph of the Hasse diagram
of $P$, i.e., it is a subset $M\subseteq P \times P$ such that
\begin{itemize}
\item
$(a,b)\in M$ implies $b \succ a;$ ,i.e., $a<b$ and no $c$ satisfies $a<c<b$.
\item
each $a\in P$ belongs to at most $1$ element in $M$.
\end{itemize}
When $(a,b) \in M$, we write $a=d(b)$ and $b=u(a)$.
\item
A partial matching on $P$ is called \emph{acyclic} if there does not exist a cycle
\[
a_1 \prec u(a_1) \succ a_2 \prec u(a_2) \succ \cdots \prec u(a_m) \succ a_1
\]
with $m\ge 2$ and all $a_i\in P$ being distinct.

\end{definition}

Given an acyclic partial matching $M$ on a poset $P$, an element $c$ is \emph{critical} if it is unmatched. If none of the critical cells can be further paired in the matching $M$ is called \emph{complete}. 
If every element is matched by $M$, $M$ is called \emph{perfect}.

The main theorem of discrete Morse theory as given in~\cite[Theorem 11.13]{Kozlov} is

\begin{theorem}\label{thm:main_mta}
Let $\Delta$ be a polyhedral cell complex and let $M$ be an acyclic matching on the face poset of $\Delta$.
Let $c_i$ denote the number of critical $i$-dimensional cells of $\Delta$.
The space $\Delta$ is homotopy equivalent to a cell complex $\Delta_c$ with $c_i$ cells of dimension $i$ for each $i\ge 0$, plus a single $0$-dimensional cell in the case where the empty set is paired in the matching.
\end{theorem}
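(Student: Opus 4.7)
The plan is to proceed by induction on the number of matched pairs $|M|$, reducing each inductive step to an elementary collapse on the polyhedral complex $\Delta$. For the base case $|M|=0$, every cell of $\Delta$ is critical, so $\Delta_c = \Delta$ and the identity is the desired homotopy equivalence. The empty-set clause is vacuous here.

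For the inductive step, the crucial task is to locate a matched pair $(\sigma,\tau) \in M$ that can be realized as an elementary collapse in $\Delta$, meaning that $\sigma$ is a \emph{free face} of $\tau$ (i.e., $\tau$ is the unique coface of $\sigma$ in $\Delta$). Once such a pair is found, I would collapse $\sigma$ and $\tau$ away to obtain a polyhedral subcomplex $\Delta' \subset \Delta$ with $\Delta \simeq \Delta'$. The restricted matching $M' = M \setminus \{(\sigma,\tau)\}$ is again an acyclic matching on the face poset of $\Delta'$, with the same set of critical cells as $M$, so the inductive hypothesis applied to $(\Delta', M')$ would finish the step.

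To produce the pair $(\sigma,\tau)$, I would orient the Hasse diagram of the face poset $P$ by directing each cover $\sigma \prec \tau$ downward from $\tau$ to $\sigma$, except when $(\sigma,\tau) \in M$, in which case the edge is reversed and points from $\sigma$ up to $\tau$. Acyclicity of $M$ as in Definition~\ref{def:partialmatching} is equivalent to the absence of directed cycles in this oriented Hasse diagram, so there is a consistent partial order; extending it to a linear order, I would select a matched pair $(\sigma,\tau)$ with $\tau$ appearing as late as possible. If $\sigma$ admitted a second coface $\tau' \neq \tau$ in $\Delta$, the downward edge $\tau' \to \sigma$ followed by the reversed matching edge $\sigma \to \tau$ would give a forward-directed path; chasing $\tau'$ through its own matching or non-matching relations would then produce either a directed cycle or a violation of the maximality of $\tau$ in the linear extension.

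The main obstacle is precisely this freeness lemma: acyclicity must be shown to force the existence of a matched pair $(\sigma,\tau)$ with $\sigma$ a free face of $\tau$. The bookkeeping in chasing alternating matching and non-matching arrows is delicate, and this is the only step where acyclicity, as opposed to being merely a partial matching, is genuinely used. The parenthetical clause about pairing the empty set is then handled by treating the empty face as a $(-1)$-dimensional cell of $\Delta$: if $\emptyset$ is matched with some $0$-cell, the naive induction on the reduced face poset undercounts the $0$-cells by one, and appending a single $0$-dimensional cell to $\Delta_c$ restores the correct homotopy type under this boundary convention.
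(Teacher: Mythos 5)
This theorem is quoted in the paper from Kozlov (Theorem 11.13) and not proved there, so your proposal must stand on its own; unfortunately the step you yourself flag as the main obstacle --- the ``freeness lemma'' --- is not merely delicate but false, and the whole collapsing strategy fails with it. Acyclicity does \emph{not} force the existence of a matched pair $(\sigma,\tau)$ with $\sigma$ a free face of $\tau$ in $\Delta$. Take $\Delta$ to be the boundary of a triangle with vertices $a,b,c$ and edges $ab,bc,ca$, and let $M=\{(a,ab),(b,bc)\}$, which is acyclic (a valid linear extension is $c,b,bc,a,ab,ca$) and leaves $c$ and $ca$ critical. Every vertex of this complex lies in two edges, so $\Delta\cong S^1$ has no free faces at all, and no elementary collapse is possible; yet the theorem correctly asserts $\Delta\simeq S^0 * \mathrm{pt}$-type CW complex with one $0$-cell and one $1$-cell. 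Your argument for the freeness of the latest matched pair breaks down precisely when the second coface $\tau'$ of $\sigma$ is a \emph{critical} cell occurring after $\tau$ in the linear extension: then chasing $\tau'$ produces neither a directed cycle nor a violation of maximality, because $\tau'$ participates in no matched pair. A second symptom of the same problem is that your induction implicitly terminates at the subcomplex of critical cells, but the critical cells need not form a subcomplex (in the example, $\{c,ca\}$ omits the face $a$ of $ca$).

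The repair requires reversing the direction of the construction: rather than collapsing $\Delta$ downward, build it upward along a linear extension $\mathcal{L}$ in which each matched pair is consecutive (Theorem~\ref{thm:linear_extension}). When a consecutive pair $(\sigma,\tau)$ is adjoined, $\sigma$ \emph{is} a free face of $\tau$ inside the partial complex of cells preceding $\tau$ in $\mathcal{L}$ --- all other cofaces of $\sigma$ come later --- so adjoining the pair does not change the homotopy type; when a critical cell is adjoined, it contributes one new cell, but its attaching map must be transported through the homotopy equivalences accumulated so far. This is why the conclusion is only that $\Delta$ is homotopy equivalent to \emph{some} CW complex $\Delta_c$ with the stated cell counts, not that $\Delta$ collapses onto a subcomplex; the two statements are genuinely different, and your proposal proves only the (false in general) stronger one.
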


A common way to obtain an acyclic matching is to \emph{toggle} on an element $x$ in the vertex set of a face poset $P$.
\begin{definition} 
Let $\mathcal{F}(\Delta)$ be the face poset of a simplicial complex $\Delta$ and $Q \subseteq \mathcal{F}(\Delta)$ a subposet. For $x$ an element in the vertex set of $\Delta$, \emph{toggling} on an element $x$ is a partial matching in $\mathcal{F}(\Delta)$ that pairs subsets $a \in Q$, $x \not\in a$ with $a \cup \{x\}$, whenever possible. We omit the subposet when clear from context. 
\end{definition}

It is often useful to create acyclic partial matchings on different sections of the face poset of a simplicial complex and then combine
them to form a larger acyclic partial matching on the entire poset.
This process is detailed in the following theorem known as the \textit{Cluster Lemma} in \cite{Jakob} and the \textit{Patchwork Theorem} in \cite{Kozlov}.

\begin{theorem}\label{thm:patchwork}
Assume that $\varphi : P \rightarrow Q$ is an order-preserving map.
For any collection of acyclic matchings on the subposets $\varphi^{-1}(q)$ for $q\in Q$, the union of these matchings is itself an acyclic matching on $P$.
\end{theorem}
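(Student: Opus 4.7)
The plan is to separate the conclusion into two claims about the union $M := \bigcup_{q \in Q} M_q$, where each $M_q$ is the given acyclic matching on the fiber $\varphi^{-1}(q)$: first that $M$ is a partial matching on the full poset $P$, and second that $M$ is acyclic in $P$.

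For the first claim, I would note that the fibers $\{\varphi^{-1}(q)\}_{q \in Q}$ partition $P$, so each element of $P$ appears in at most one $M_q$, hence in at most one pair of $M$. The subtle point is that a covering pair in $\varphi^{-1}(q)$ must still be a covering pair in $P$: if $(a,b) \in M_q$ but some $c \in P$ satisfied $a \prec c \prec b$ in $P$, then order-preservation of $\varphi$ gives $q = \varphi(a) \leq \varphi(c) \leq \varphi(b) = q$, so $\varphi(c) = q$ and $c \in \varphi^{-1}(q)$, contradicting that $(a,b)$ was a covering relation in the fiber.

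For acyclicity, I would argue by contradiction. Suppose
\[
a_1 \prec u(a_1) \succ a_2 \prec u(a_2) \succ \cdots \prec u(a_m) \succ a_1
\]
is a nontrivial alternating cycle in $M$ with all $a_i$ distinct. Each matched pair $(a_i, u(a_i))$ lies inside a single fiber, forcing $\varphi(a_i) = \varphi(u(a_i))$. Each down-step $u(a_i) \succ a_{i+1}$ (indices mod $m$) combined with order-preservation yields $\varphi(u(a_i)) \geq \varphi(a_{i+1})$. Concatenating these equalities and inequalities around the cycle gives
\[
\varphi(a_1) = \varphi(u(a_1)) \geq \varphi(a_2) = \varphi(u(a_2)) \geq \cdots \geq \varphi(u(a_m)) \geq \varphi(a_1),
\]
so every term collapses to a single value $q$. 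Thus the entire cycle lives in $\varphi^{-1}(q)$ and is a cycle in $M_q$, contradicting the acyclicity of $M_q$.

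The main obstacle I anticipate is the covering-relation check in the first claim: it is easy to overlook that a matching on a subposet is not automatically a matching on the ambient poset, and order-preservation of $\varphi$ is precisely what rules out inserting a strictly intermediate element outside the fiber. Once that observation is in hand, the acyclicity argument is essentially a monotonicity-around-a-cycle calculation, and the result follows cleanly.
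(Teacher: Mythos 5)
The paper states Theorem~\ref{thm:patchwork} as an imported result (Jonsson's Cluster Lemma, Kozlov's Patchwork Theorem) and gives no proof of its own, so there is no internal argument to compare against; your proof is correct and is essentially the standard one from those sources. Both halves are sound: order-preservation of $\varphi$ is exactly what guarantees that a covering pair inside a fiber remains a covering pair in $P$, and the monotonicity-around-the-cycle computation collapses all the values $\varphi(a_i)=\varphi(u(a_i))$ to a single $q$, placing the whole alternating cycle inside $\varphi^{-1}(q)$ (where a $P$-covering between two fiber elements is automatically a fiber covering, so the cycle genuinely contradicts the acyclicity of $M_q$).
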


The following theorem shows there is an intimate relationship between linear extensions and acyclic matchings \cite{Kozlov}. 
 
\begin{theorem}[Kozlov, Theorem 11.2] \label{thm:linear_extension}
A partial matching on a poset $P$ is acyclic if and only if there exists a linear extension of $\mathcal{L}$ of $P$ such that $x$ and $u(x)$ follow consecutively. 
\end{theorem}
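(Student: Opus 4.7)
The plan is to prove both directions, with the easy $(\Leftarrow)$ direction by direct contradiction and the $(\Rightarrow)$ direction by a quotient construction.

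For $(\Leftarrow)$, given a linear extension $\mathcal{L}$ of $P$ in which every matched pair $(x, u(x))$ sits in consecutive positions with $x$ immediately before $u(x)$, I would argue by contradiction. Suppose an $M$-cycle $a_1 \prec u(a_1) \succ a_2 \prec u(a_2) \succ \cdots \prec u(a_m) \succ a_1$ exists. The cover relation $a_{i+1} \prec u(a_i)$ forces $a_{i+1} <_\mathcal{L} u(a_i)$. Since $a_i$ occupies the slot immediately before $u(a_i)$ and $a_{i+1} \notin \{a_i, u(a_i)\}$ (by distinctness of the $a_j$'s together with the fact that each element belongs to at most one matched pair), we conclude $a_{i+1} <_\mathcal{L} a_i$. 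Iterating around the cycle yields $a_1 >_\mathcal{L} a_2 >_\mathcal{L} \cdots >_\mathcal{L} a_m >_\mathcal{L} a_1$, which is impossible in a linear order.

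For $(\Rightarrow)$, assuming $M$ is acyclic, I would construct the desired linear extension by passing to a quotient of $P$. Define an equivalence relation $\sim$ on $P$ by $x \sim y$ iff $x = y$ or $\{x,y\} \in M$, so each class is a singleton or a matched pair. Set $\tilde{P} := P/\sim$ and equip it with the relation $\leq_{\tilde P}$ obtained as the transitive closure of $[x] \leq^{0} [y] \Leftrightarrow \exists\, x' \in [x],\, y' \in [y]$ with $x' \leq_P y'$. The crucial claim is that acyclicity of $M$ forces $\leq_{\tilde P}$ to be antisymmetric, so $\tilde{P}$ is a finite poset. Granting this, take any linear extension of $\tilde{P}$ and lift it to $P$ by replacing each two-element class $\{a, u(a)\}$ by the consecutive pair $(a, u(a))$, leaving singleton classes as is. The result respects $<_P$ (across classes via the quotient order, within a matched class via $a <_P u(a)$) and puts every matched pair in consecutive positions by construction.

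The main obstacle is establishing antisymmetry of $\leq_{\tilde P}$. A would-be violation $[x] \neq [y]$ with $[x] \leq_{\tilde P} [y] \leq_{\tilde P} [x]$ unfolds, from the definition of the transitive closure, into a closed walk in $P$ alternating between $\leq_P$-chains and matched-pair switches inside equivalence classes. My strategy is to pick such a violation of minimal total length and extract an $M$-cycle from it: each matched class visited contributes a pair $a_j \prec u(a_j)$, while the intervening $\leq_P$-chains, decomposed via the Hasse diagram, supply the connecting relations $u(a_j) \succ a_{j+1}$. Minimality forces both distinctness of the $a_j$'s and collapse of each intervening chain to the single cover relation demanded by Definition~\ref{def:partialmatching}, producing a bona fide $M$-cycle and contradicting acyclicity. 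This will complete the proof of antisymmetry and hence of the theorem.
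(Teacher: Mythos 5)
The paper does not prove this statement---it is imported verbatim from Kozlov---so there is no in-paper argument to compare against, and I am assessing your proof on its own terms. Your $(\Leftarrow)$ direction is complete and correct; it is the standard argument. The $(\Rightarrow)$ direction correctly reduces the problem to antisymmetry of $\leq_{\tilde P}$, and the lifting step at the end is fine. The gap sits exactly where you flag the ``main obstacle,'' and what you wrote does not close it: the claim that minimality of an antisymmetry violation forces each intervening $\leq_P$-chain to collapse to a single cover relation $u(a_j)\succ a_{j+1}$ is unjustified, and for an arbitrary poset it is false. Concretely, take $P=\{a_1,a_2,b,u_1,u_2\}$ with cover relations $a_1\prec u_1$, $a_1\prec u_2$, $a_2\prec u_2$, $a_2\prec b$, $b\prec u_1$, and $M=\{(a_1,u_1),(a_2,u_2)\}$. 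The only candidate cycle in the sense of Definition~\ref{def:partialmatching} would require $u_1\succ a_2$, which fails because $a_2\prec b\prec u_1$; so $M$ is acyclic by that definition. Yet $[a_1]\leq^0[a_2]\leq^0[b]\leq^0[a_1]$ (witnessed by $a_1<u_2$, $a_2<b$, $b<u_1$) is a minimal antisymmetry violation in your quotient, the chain from $a_2$ up to $u_1$ has length two and does not collapse, and indeed no admissible linear extension exists: $a_2<_{\mathcal L}b<_{\mathcal L}a_1$ is forced because $a_1$ must sit immediately below $u_1$, while $a_1<_{\mathcal L}a_2$ is forced because $a_2$ must sit immediately below $u_2$ and $a_1<_P u_2$.

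The missing ingredient is gradedness. Every poset to which the paper applies Theorem~\ref{thm:linear_extension} is the face poset of a simplicial complex, where a cover raises dimension by exactly one. In a graded poset your minimality argument can be completed: first normalize a minimal violation so that every visited class is a matched pair traversed by a down-switch (classes traversed without a switch, or by an up-switch, can be spliced out), which yields relations $a_j<_P u(a_{j+1})$ cyclically; then summing ranks around the cycle and using $\mathrm{rk}(u(a_{j+1}))=\mathrm{rk}(a_{j+1})+1$ forces $\mathrm{rk}(a_j)=\mathrm{rk}(a_{j+1})$ for all $j$, so each $a_j<_P u(a_{j+1})$ is in fact a cover and you obtain a genuine cycle in the sense of Definition~\ref{def:partialmatching}. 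With that rank count inserted (and the theorem read, as the paper uses it, on face posets), your quotient strategy goes through; without it, the one step you defer is precisely where all of the content of the theorem lives.
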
 

Since $x$ and $u(x)$ follow consecutively in the linear extension, when we refer to these elements in the linear extension we will use the notation $(x,u(x))$ and consider them as a pair of consecutive elements in the poset. 

\begin{lemma}\label{lem:toggle}
  Let $Q \subseteq \mathcal{F}(\Delta)$ be a subposet. Toggling on $Q$ provides an acyclic partial matching. 
    \end{lemma}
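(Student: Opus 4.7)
The plan is to verify acyclicity directly from the definition, exploiting the very rigid structure of a toggling matching: every matched pair differs by the single element $x$. I would set up the proof by contradiction, assume we have a cycle
\[
a_1 \prec u(a_1) \succ a_2 \prec u(a_2) \succ \cdots \prec u(a_m) \succ a_1
\]
with $m \geq 2$ and all $a_i$ distinct, and then show that the requirement $a_{i+1} \neq a_i$ is incompatible with how toggling pairs elements.

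The key observation is that for each $i$, we have $u(a_i) = a_i \cup \{x\}$ with $x \notin a_i$, by the definition of toggling. Moreover, since $a_{i+1}$ is the bottom element of its own matched pair, we also know $x \notin a_{i+1}$. Now $u(a_i) \succ a_{i+1}$ is a covering relation in $\mathcal{F}(\Delta)$, so $a_{i+1}$ is obtained from $a_i \cup \{x\}$ by removing exactly one element $y$. If $y \neq x$ then $x \in a_{i+1}$, contradicting the previous sentence. Hence $y = x$ and therefore $a_{i+1} = a_i$, contradicting the distinctness built into the definition of an alternating cycle.

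I would then note that this suffices: the partial matching condition (each element appears in at most one pair) is immediate, because every face either contains $x$ or does not, and toggling pairs each face uniquely with its "$x$-complement" in $Q$ when that complement lies in $Q$. So the only content is acyclicity, which the paragraph above handles. There is essentially no main obstacle; the lemma is a clean application of the fact that toggling is a "one-coordinate flip," and the proof fits in a few lines. If desired, one could alternatively invoke Theorem~\ref{thm:linear_extension} by constructing an explicit linear extension in which each matched pair $(a, a \cup \{x\})$ is placed consecutively (for instance, order faces by cardinality and group each $a$ with its toggle partner), but the direct argument above is more transparent.
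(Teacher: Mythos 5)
Your proof is correct and takes essentially the same route as the paper: both arguments rest on the observation that the only element covered by $u(a_i) = a_i \cup \{x\}$ that fails to contain $x$ is $a_i$ itself, so no second downward step in an alternating cycle is possible. The paper phrases this as ``any $a_2 \prec u(a_1)$ with $a_2 \neq a_1$ contains $x$ and hence has no $u(a_2)$,'' while you phrase it as forcing $a_{i+1} = a_i$ against distinctness, but these are the same argument.
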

    \begin{proof}
    To see this suppose we toggle on the element $x$. Start with an element $a_1 \in P$ such that $x \not\in a_1$, $x \in u(a_1)$. Any element $a_2 \prec u(a_1)$ with $a_2 \neq a_1$ contains $x$ since $(a_1, u(a_1)) \in M$. Hence, there is no element $u(a_2)$, and a cycle cannot be created. 

    \end{proof}

Additionally, using the patchwork theorem, we see that performing repeated toggling yields an acyclic partial matching. 

\begin{lemma}
Let $\mathcal{F}(\Delta)$ be the face poset of a simplicial complex $\Delta$ and suppose $x_1, x_2, \dots, x_n$ is a sequence of vertices if $\Delta$. Repeatedly toggling on $x_1$, then $x_2$, and so on, in $\mathcal{F}(\Delta)$ yields an acyclic partial matching on $\mathcal{F}(\Delta)$. 
\end{lemma}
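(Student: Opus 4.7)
The plan is to proceed by induction on $n$, using the Patchwork Theorem (Theorem~\ref{thm:patchwork}) at each inductive step to glue the matching from the first toggle onto the matching from the remaining $n-1$ toggles. The base case $n = 1$ is exactly Lemma~\ref{lem:toggle}.

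For the inductive step, write $M = M_1 \cup M^*$, where $M_1$ is the matching produced by toggling on $x_1$ (acyclic by Lemma~\ref{lem:toggle}) and $M^* = M_2 \cup \cdots \cup M_n$ is the matching produced by sequentially toggling $x_2, \ldots, x_n$ on the subposet $P_1 \subseteq \mathcal{F}(\Delta)$ of elements left unmatched by $M_1$ (acyclic by the inductive hypothesis). I would then define $\varphi\colon \mathcal{F}(\Delta) \to \{0,1\}$ by $\varphi(a) = 0$ if $a$ is matched by $M_1$ and $\varphi(a) = 1$ if $a \in P_1$, so that $M_1$ lives entirely inside $\varphi^{-1}(0)$ and $M^*$ lives entirely inside $\varphi^{-1}(1)$. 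Once $\varphi$ is checked to be order-preserving, Patchwork immediately delivers the acyclicity of $M$ on $\mathcal{F}(\Delta)$.

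The key step is verifying that $\varphi$ is order-preserving, equivalently that $P_1$ is upward-closed in $\mathcal{F}(\Delta)$. This uses the simplicial closure of $\mathcal{F}(\Delta)$: suppose $a \in P_1$, so $x_1 \notin a$ and $a \cup \{x_1\} \notin \mathcal{F}(\Delta)$, and suppose $a \subsetneq b$ in $\mathcal{F}(\Delta)$. If $b \notin P_1$, then $b$ is matched by $M_1$, and in either subcase (whether $x_1 \in b$, so that $b \supseteq a \cup \{x_1\}$, or $x_1 \notin b$ with $b \cup \{x_1\} \in \mathcal{F}(\Delta)$, so that $a \cup \{x_1\} \subseteq b \cup \{x_1\}$) the set $a \cup \{x_1\}$ appears as a subset of a face of $\mathcal{F}(\Delta)$, which by simplicial closure makes $a \cup \{x_1\}$ itself a face, contradicting $a \in P_1$. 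Hence $b \in P_1$ and $\varphi$ is order-preserving.

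The main obstacle is that the inductive hypothesis must be applied to $M^*$ on $P_1$, and $P_1$ is not itself the face poset of a simplicial complex; small examples show that successive unmatched subposets $P_2, P_3, \ldots$ can fail to remain upward-closed inside the previous one, so the same Patchwork step does not naively iterate. The cleanest route I would take is to formulate a strengthened inductive hypothesis that works for arbitrary subposets $Q \subseteq \mathcal{F}(\Delta)$ (noting that Lemma~\ref{lem:toggle} already accommodates this generality as its base case), and then to observe that any hypothetical cycle in the combined matching cannot contain a pair from $M_1$ by the simplicial-closure argument above, so that it lives entirely within $P_1$ and is ruled out by the inductive hypothesis applied to the remaining $n-1$ toggles on $P_1$. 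Carefully tracking, at each level of the induction, which structural property of the ambient simplicial complex survives the toggle-and-restrict operation is the main technical burden of the proof.
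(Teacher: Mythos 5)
Your first Patchwork step is correct and your diagnosis of the obstacle is accurate: the set $D_1$ of faces involved with $x_1$ is downward-closed by simplicial closure, so $P_1$ is an upper set and the two-fiber application of Theorem~\ref{thm:patchwork} goes through once; but $P_1$ is no longer the face poset of a complex, the next unmatched set need not be an upper set of $P_1$, and the induction does not close. The genuine gap is in your proposed repair. The claim that a hypothetical cycle ``cannot contain a pair from $M_1$ by the simplicial-closure argument'' is only valid at the top level, where every face $b$ with $x_1 \in b$ has $b \smallsetminus \{x_1\}$ in the ambient poset and is therefore matched downward, hence cannot serve as the lower element of any pair. Once you pass to the strengthened hypothesis over an arbitrary subposet $Q$, an element $b \in Q$ with $x_1 \in b$ may have $b \smallsetminus \{x_1\} \notin Q$; such a $b$ survives the first toggle unmatched and can appear as the lower element of a later pair, so the cycle is not immediately broken. (Your own observation that $P_2$ need not be upward-closed in $P_1$ is exactly the phenomenon that defeats this step one level down.) Making the inductive step work requires a genuinely different cycle analysis --- for instance, following the cycle to the step at which $x_1$ is finally removed and deriving a contradiction with the first toggle there --- and you have not supplied it; you explicitly defer it as ``the main technical burden.''

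The paper avoids the induction entirely with a single application of Theorem~\ref{thm:patchwork} over the chain $Y_1 \prec Y_2 \prec \cdots \prec Y_n \prec \mathcal{R}$: the fiber $D_i$ consists of all faces $\alpha$ with $x_i \in \alpha$ or $\alpha \cup \{x_i\} \in \mathcal{F}(\Delta)$ that lie in no earlier $D_j$. Because ``$x_i \in \alpha$ or $\alpha \cup \{x_i\} \in \mathcal{F}(\Delta)$'' is a downward-closed condition on faces of $\Delta$ --- the same simplicial-closure observation you make for $x_1$, but applied to each $x_i$ in the ambient complex rather than in the shrinking unmatched posets --- the map sending $\alpha$ to the least $i$ with $\alpha \in D_i$ (and to $\mathcal{R}$ otherwise) is order-preserving; each fiber carries the acyclic toggle on $x_i$ from Lemma~\ref{lem:toggle}, and Patchwork finishes in one stroke. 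I recommend restructuring your argument along these lines: index the fibers by the first $x_i$ a face is involved with, not by the round a face survives to.
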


\begin{proof}
Let $Q$ be a poset with elements $\mathcal{R}$ and $Y_{i}$ for $i \in [n]$ with relations given by $Y_i \prec Y_{i+1}$ for all $i \in [n-1]$ and $Y_n \prec \mathcal{R}$. Let $D_1 = \{\alpha \in P | x_1 \in \alpha \text{ or } \alpha \cup \{x_1\} \in P\}$ and recursively define $D_i := \{\alpha \in P | x_i \in \alpha \text{ or } \alpha \cup \{x_i\} \in P, \text{ and } \alpha \not\in D_j \text{ for } j \leq i-1\}$ for $2\leq i \leq n$. Define $\varphi: \mathcal{F}(\Delta) \rightarrow Q$ by  $\varphi^{-1}(Y_i)  := D_i$, for $1 \leq i \leq n$, and the remaining elements to $\mathcal{R}$. The map $\varphi$ is well-defined and order-preserving. On $\varphi^{-1}(Y_{i})$ toggle on $x_i$, which is an acyclic matching by Lemma \ref{lem:toggle}. The union of these preimages forms an acyclic matching on $\mathcal{F}(\Delta)$ by Theorem~\ref{thm:patchwork}.  
\end{proof}

We will use discrete Morse theory to determine the homotopy type of clawed graphs. We observe now that induced claw units in graphs behave nicely with $2$-matching complexes.

\begin{proposition} \label{prop:bijections}
Let $c \in G$ be an induced claw unit with edge set $E(c) = \{x,y,z\}$. 
The following sets are in bijection with each other: 
\begin{enumerate} 
\item [(i)] The set of $2$-matchings of $G \smallsetminus c$,
\item [(ii)] The set of $2$-matchings containing $\{y,z\}$, and
\item [(iii)] The set of $2$-matchings containing $x$ and not $y$ or $z$.
\end{enumerate}
\end{proposition}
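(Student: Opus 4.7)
The plan is to exhibit explicit bijections
$\Phi\colon (i)\to(ii)$ given by $M\mapsto M\cup\{y,z\}$ and
$\Psi\colon (i)\to(iii)$ given by $M\mapsto M\cup\{x\}$, and to verify that each is a well-defined bijection with the obvious inverse (remove the added claw edge(s)). The content then reduces to a local degree check at the four vertices of the claw unit.

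First I would set up notation: let $v$ be the unique degree-$3$ vertex of the claw $c$, so that $E(c)=\{x,y,z\}$ with $x=\{v,v_x\}$, $y=\{v,v_y\}$, $z=\{v,v_z\}$, and each of $v_x,v_y,v_z$ has $\deg_G\leq 2$. The crucial observation is that in $G\smallsetminus c$ the vertex $v$ is deleted, so the degree of $v_x$ (resp.\ $v_y$, $v_z$) in $G\smallsetminus c$ is at most $1$. Consequently, in any $2$-matching $M$ of $G\smallsetminus c$, each of $v_x,v_y,v_z$ is incident to at most one edge of $M$.

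For the bijection $\Phi\colon(i)\to(ii)$, I would check that $M\cup\{y,z\}$ is a $2$-matching of $G$: at $v$ it has degree exactly $2$; at $v_y$ and $v_z$ the degree increases by $1$ from a value at most $1$, so is at most $2$; at every other vertex the degree is unchanged from $M$. Conversely, any $2$-matching $N$ of $G$ containing $\{y,z\}$ forces $\deg_N(v)=2$, so $x\notin N$; hence $N\smallsetminus\{y,z\}$ uses no edge incident to $v$ and lies in $E(G\smallsetminus c)$, and its degrees are bounded by those of $N$. The two operations $M\mapsto M\cup\{y,z\}$ and $N\mapsto N\smallsetminus\{y,z\}$ are mutually inverse.

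For $\Psi\colon(i)\to(iii)$, the same argument shows that $M\cup\{x\}$ is a $2$-matching of $G$ (degree at $v$ becomes $1$; degree at $v_x$ grows from at most $1$ to at most $2$), contains $x$, and contains neither $y$ nor $z$. Conversely, a $2$-matching of $G$ containing $x$ but neither $y$ nor $z$ has no other edges at $v$, so deleting $x$ returns an edge set in $E(G\smallsetminus c)$ satisfying the $2$-matching condition. Composing the inverse of $\Phi$ with $\Psi$ yields the bijection $(ii)\leftrightarrow(iii)$ directly, namely $N\mapsto (N\smallsetminus\{y,z\})\cup\{x\}$.

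There is no real obstacle here; the only subtlety is remembering that the claw's outer vertices are permitted to have degree up to $2$ in $G$, which is precisely what makes adding back a claw edge safe. I would spend one extra sentence emphasizing that the induced-claw hypothesis is what guarantees $\deg_{G\smallsetminus c}(v_x),\deg_{G\smallsetminus c}(v_y),\deg_{G\smallsetminus c}(v_z)\leq 1$, since without this the maps $\Phi$ and $\Psi$ could fail to land in the $2$-matching complex of $G$.
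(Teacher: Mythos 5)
Your proposal is correct and takes essentially the same approach as the paper's (much terser) proof: both establish the bijections by adding $\{y,z\}$ or $\{x\}$ to a $2$-matching of $G\smallsetminus c$ and removing them in the other direction, with the degree bound on the claw's outer vertices doing the work. Your version simply spells out the local degree checks and the mutual-inverse verification that the paper leaves implicit.
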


\begin{proof}
For any $2$-matching $m$ in $G \smallsetminus c$, both $m \cup \{x\}$ (not containing $y$ or $z$) and $m \cup \{y,z\}$ are $2$-matchings in $G$. Notice that $x$ and $\{y,z\}$ cannot be in a $2$-matching together since they all meet at a degree three vertex. 
\end{proof}

\begin{example}
Consider the graph in Figure~\ref{fig:paw}. There is exactly $1$ induced claw unit, call it $c$, given by the edge set $\{x,y,z\}$. The set $\{e\}$ is the only $2$-matching of $G \smallsetminus c$. Notice that the $2$-matchings containing $\{y,z\}$ consists of exactly $\{e,y,z\}$ and $2$-matchings containing $x$ and not $y$ or $z$ consists of exactly $\{e,x\}$.  
\end{example}

\begin{figure}[h]
\begin{center}
\begin{tikzpicture}[scale = .75]

\filldraw (2,2.5) circle (0.1cm);
\filldraw (2,4) circle (0.1cm);
\filldraw (1,5) circle (0.1cm);
\filldraw (3,5) circle (0.1cm);

\draw (2,4) -- (1,5) -- (3,5) --(2,4) -- (2,2.5);

\node at (2, 5.3) {$e$};
\node at (1.2, 4.3) {$y$};
\node at (2.8, 4.3) {$z$};
\node at (2.3, 3.3) {$x$};

\end{tikzpicture}
\end{center}
\caption{}
\label{fig:paw}
\end{figure}


We turn our attention to a general connectivity result of $M_2(G)$ for any graph $G$. Since $\mathcal{F}(M_2(G))$ the face poset of a $2$-matching complex of $G$ has vertex set consisting of faces of $M_2(G)$ with an order relation of containment, for $a,b \in \mathcal{F}(M_2(G))$, $a \prec b$ if $b = a \cup e$ for some $e \in E(G)$. In relation to Figure~\ref{fig:paw}, suppose we define a partial matching on $\mathcal{F}(M_2(G))$ by toggling on $x$, where $x \in E(G)$. Then, the matchings remaining after toggling are exactly those that contain $\{y,z\}$ and therefore are in bijection with $2$-matchings of $G \smallsetminus c$ by Proposition~\ref{prop:bijections}. Hence, if you have $2$ induced claw units $c_1$ and $c_2$ in $G$, the choice of toggle edge in $c_1$ and $c_2$ and the order in which one toggles is irrelevant.

\begin{lemma} \label{lem:claw}
Let $G$ be a simple, finite graph and $\mathcal{C} = \{c_1,...,c_n\}$ be a collection of induced claw units in $G$ with $E(c_i) := \{x_i, y_i, z_i\}$ for each $c_i$. Then the connectivity of $M_2(G)$ is at least $2|\mathcal{C}|-2$ and $M_2(G) \simeq S^{2|\mathcal{C}| -1} \ast M_2(G \smallsetminus \mathcal{C})$. Further, if we fix the toggle edge in each $c_i$, say $x_i$, then every critical cell remaining after toggling on all of the $x_i$'s will consist of $\{y_i,z_i\}$ for all $i$, regardless of order.  
\end{lemma}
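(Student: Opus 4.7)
The plan is to apply the repeated toggling lemma with the sequence $x_1, \ldots, x_n$ and identify the critical cells by direct analysis. For each induced claw unit $c_i$ with center $v_i$, toggling on $x_i$ attempts to pair a $2$-matching $a$ with $x_i \notin a$ to $a \cup \{x_i\}$; by the $K_{1,3}$ structure of $c_i$ this pairing fails precisely when $v_i$ would acquire degree $3$ in $a \cup \{x_i\}$, i.e.\ when $\{y_i, z_i\} \subseteq a$. Since the toggle on $x_i$ only affects the edge set $\{x_i, y_i, z_i\}$ and an analogous statement holds for each $c_j$ with $j \neq i$, iterating this analysis shows that the critical cells are precisely the $2$-matchings of $G$ containing $\{y_i, z_i\}$ for every $i$, regardless of the order of toggling. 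This immediately establishes the final sentence of the lemma.

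To extract the homotopy type, I would observe a simplicial join identification coming from the claw decomposition. For any $m \in M_2(G)$ write $m = \sigma_1 \cup \cdots \cup \sigma_n \cup \tau$ with $\sigma_i = m \cap \{x_i, y_i, z_i\}$ and $\tau$ containing no edges of any $c_i$; then each $\sigma_i$ is a proper subset of $\{x_i, y_i, z_i\}$ (else $v_i$ would have degree $3$), and $\tau$ is a $2$-matching of $G \smallsetminus \mathcal{C}$. Conversely, any such union is a $2$-matching of $G$, which is essentially the content of Proposition~\ref{prop:bijections} applied iteratively. This gives a simplicial isomorphism
\[
M_2(G) \;\cong\; \underbrace{(\partial \Delta^2) * \cdots * (\partial \Delta^2)}_{n \text{ copies}} * M_2(G \smallsetminus \mathcal{C}),
\]
where the $i$-th copy of $\partial \Delta^2$ is the boundary of the $2$-simplex on vertex set $\{x_i, y_i, z_i\}$.

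Since $\partial \Delta^2 \simeq S^1$ and the iterated join $(S^1)^{*n} \simeq S^{2n-1}$, this yields $M_2(G) \simeq S^{2n-1} * M_2(G \smallsetminus \mathcal{C})$, and the connectivity bound $2|\mathcal{C}| - 2$ follows since $S^{2n-1}$ is $(2n-2)$-connected and joining with another complex does not decrease connectivity. The step I expect to be most delicate is the converse direction of the join identification: one must show that combining any $(\sigma_1, \ldots, \sigma_n)$ with any $\tau \in M_2(G \smallsetminus \mathcal{C})$ never exceeds degree $2$ at a claw leaf. This is exactly where the induced-claw-unit hypothesis (leaf degree $\leq 2$ in $G$, hence $\leq 1$ in $G \smallsetminus \mathcal{C}$) is indispensable: without it a leaf could already support two edges of $\tau$, and adding its claw edge would create a degree-$3$ vertex and destroy the $2$-matching property. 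A secondary technicality is verifying that $c_2, \ldots, c_n$ remain induced claw units in the intermediate graphs $G \smallsetminus c_1, G \smallsetminus (c_1 \cup c_2), \ldots$; this is automatic because no center $v_j$ can be a leaf of any $c_i$ (centers have degree $3$ while leaves have degree at most $2$).
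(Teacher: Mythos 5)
Your proposal is correct and follows essentially the same route as the paper: the homotopy equivalence comes from recognizing $M_2(G)$ as the join of the boundaries $M_2(c_i)\simeq S^1$ with $M_2(G\smallsetminus\mathcal{C})$, and the critical cells are identified by iterated toggling on the $x_i$, with order-independence coming from the fact that each toggle only interacts with its own claw. The one difference is that you explicitly verify the converse direction of the join decomposition (that recombining arbitrary pieces stays a $2$-matching, using that claw leaves have degree at most $2$ in $G$), which the paper asserts without proof; this is a welcome addition rather than a divergence.
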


\begin{proof}
To see that $M_2(G) \simeq S^{2|\mathcal{C}| -1} \ast M_2(G \smallsetminus \mathcal{C})$, notice that for all $i$, $M_2(c_i) = M_2(K_{1,3}) \simeq S^1$ is the boundary of a $2$-simplex and any $2$-matching of $G$ is the union of a $2$-matching from $c_i$, for each $i$, and a $2$-matching of $G \smallsetminus \mathcal{C}$. Therefore,
\[ M_2(G) \simeq M_2(c_1) \ast M_2(c_2) \ast \cdots \ast M_2(c_n) \ast M_2(G \smallsetminus \mathcal{C}) \simeq S^{2|\mathcal{C}| -1} \ast M_2(G \smallsetminus \mathcal{C}).
\]  
It follows that the connectivity of $M_2(G)$ is at least $2|\mathcal{C}|-2$. 

 To see that the remaining critical cell will consist of $\{y_i,z_i\}$ for all $i$, regardless of order, let $P := \mathcal{F}(M_2(G))$ be the face poset of the $2$-matching complex of $G$. We define a partial (discrete Morse) matching on $P$ by (arbitrarily) fixing $x_i$ as the toggle edge for each $c_i$. Our claim is that for any permutation $\pi \in \mathfrak{S}_n$, the unmatched subposet that remains after toggling on $x_{\pi(1)}, x_{\pi(2)}, x_{\pi(3)}, \dots, x_{\pi(n)}$ is the upper-order ideal $P_{\geq \{y_{\pi(1)}, z_{\pi(1)}, y_{\pi(2)}, z_{\pi(2)}, \dots ,y_{\pi(n)}, z_{\pi(n)}\}}$. Since permutations can be generated by a sequence of transpositions, it suffices to consider the unmatched subposet obtained from toggling $x_1$, then $x_2$ and the unmatched subposet obtained from toggling $x_2$, then $x_1$. 

Suppose first that we toggle on $x_1$. The edge $x_1 \in E(G)$ forms a $2$-matching with all $2$-matchings of $G$ that do not contain both $y_1$ and $z_1$ so the unmatched cells of $P$ are precisely the elements containing both $y_1$ and $z_1$. That is, the unmatched subposet that remains is $P_{\geq \{y_1,z_1\}}$. Now, toggling on $x_2$ matches all of the $2$-matchings of $G$ that contain $y_1,z_1$, but do not contain $y_2,z_2$. All elements $b \in P_{\geq\{y_1,z_1\}}$ with $\{x_2\} \in b$ will be paired with $a := b \smallsetminus \{x_2\}$ through toggling on $x_2$ and all elements $a$ are in $ P_{\geq\{y_1,z_1\}}$ since $\{y_1,z_1\} \in b$. Notice that all matchings in $P_{ \geq \{y_1,z_1\}}$ are in bijection with $2$-matchings in $G \smallsetminus c_1$  by Proposition~\ref{prop:bijections} and $c_2 \in G \smallsetminus c_1$. 

Hence, the unmatched subposet that remains after toggling on $x_1$ then $x_2$ is precisely $P_{\geq \{y_1,z_1,y_2,z_2\}}$. An analogous argument shows that the same upper order ideal remains after toggling first on $x_2$ and then $x_1$.
By induction, we get that the unmatched subposet that remains after toggling on $x_{\pi(1)}, x_{\pi(2)}, x_{\pi(3)}, \dots x_{\pi(n)}$ is the upper-order ideal $P_{\geq \{y_{\pi(1)}, z_{\pi(1)}, y_{\pi(2)}, z_{\pi(2)}, \dots ,y_{\pi(n)}, z_{\pi(n)}\}}$. 

\end{proof}

\begin{definition}
Let $G$ be any graph. A \emph{claw-induced partial matching} is an acyclic partial matching on $\mathcal{F}(M_2(G))$ obtained by toggling on elements in the vertex set of  $\mathcal{F}(M_2(G))$ corresponding to edges in induced claw units of $G$, whenever possible.  
\end{definition}

\subsection{Matching Tree Algorithm (MTA)}

In \cite{Melou_Linusson_Nevo}, the authors detail the Matching Tree Algorithm which provides an acyclic discrete Morse matching on the face poset of an independence complex of a graph $G$. 
An \emph{independence complex} $Ind(G)$ of a graph $G$ is a simplicial complex in which the vertices are given by vertices of $G$ and faces are given by independent sets of vertices. The matching complex of a graph $G$ is equal to the independence complex of the line graph of $G$ where the vertices of the line graph are the edges of the graph and $2$ vertices are adjacent if and only if the corresponding edges are incident in the graph. 
In Section~\ref{sec:wheel}, we will use the Matching Tree Algorithm to find the homotopy type of the $1$-matching complex of a wheel graph, by looking at the independence complex of the line graph. 

 Let $G$ be a simple graph with vertex set $V=V(G)$. Bousquet-M\'elou, Linusson, and Nevo motivate the MTA with the following algorithm. Let $\Sigma$ denote the independence complex of $G$. Take a vertex $p \in V$ and denote $N(p)$ as the set of its neighbors. Define $\Delta = \{I \in \Sigma: I \cap N(p)=\emptyset\}$. For  $I \in \Delta$ and $p \not\in I$, the set of pairs $(I, I \cup \{p\})$ form a perfect matching of $\Delta$ and hence a partial matching of $\Sigma$. The vertex $p$ is called a \emph{pivot}. 
 
Notice that the unmatched elements of $\Sigma$ are those containing at least $1$ element of $N(p)$. Choose an unmatched vertex and continue the process as many times as possible. This algorithm will give rise to a rooted tree, called a \emph{matching tree of $\Sigma$}, whose nodes represent sets of unmatched elements. Some of the nodes are reduced to the empty set, and all others are of the form 
 \[
 \Sigma(A,B) = \{ I \in \Sigma: A \subseteq I \text{ and } B \cap I = \emptyset\},
 \]
 where 
 \[
 A \cap B = \emptyset \text{ and } N (A) := \bigcup\limits_{a \in A} N(a) \subseteq B. 
 \]
 
 The root of the tree is $\Sigma(\emptyset, \emptyset)$, which is equal to the set of all the independent sets of $G$. As we traverse the tree the sets $\Sigma(A,B)$ will become smaller and the leaves of the tree will have cardinality $0$ or $1$. 

The following presentation of the Matching Tree Algorithm follows \cite{JelicEtAl}. 
Begin with the root node $\Sigma(\emptyset, \emptyset)$ and at each node $\Sigma(A,B)$ where $A \cup B \neq V$ apply the following procedure: 
\begin{enumerate}
\item[(1)] If there is a vertex $v \in V \smallsetminus (A \cup B)$ such that $N(v) \smallsetminus (A \cup B) = \emptyset$, then $v$ is called a \emph{free vertex}. Give $\Sigma(A, B)$ a single child labeled $\emptyset$. 
\item[(2)] Otherwise, if there is a vertex $v \in V \smallsetminus (A \cup B)$ such that $N(v) \smallsetminus (A \cup B)$ is a single vertex $w$, then $v$ is called a \emph{pivot} and $w$ a \emph{matching vertex}. Give $\Sigma(A,B)$ a single child labeled $\Sigma(A \cup \{w\}, B \cup N(w))$. 
\item[(3)] When there is no vertex that satisfies $(1)$ or $(2)$ and $A \cup B \neq V$, choose a \emph{tentative pivot} in $V' = V \smallsetminus (A \cup B)$ and give $\Sigma(A,B)$ $2$ children $\Sigma(A \cup \{v\}, B \cup N(v))$, which we call the\emph{ right child}, and $\Sigma(A, B \cup \{v\})$, which we call the \emph{left child}.  
\end{enumerate} 

\begin{remark} Step (3) is motivated by the observation that if $v$ has at least $2$ neighbors, say $w$ and $w'$ then some of the unmatched sets $I$ contain $w$, and some others don't, but if they do not contain $w$ than they must contain $w'$. 
\end{remark}

The following theorem is the main theorem for the Matching Tree Algorithm, which is due to Bousquet-M\'elou, Linusson, and Nevo \cite{Melou_Linusson_Nevo}, but is stated as it appears in Braun and Hough \cite{Braun_Hough}. 

\begin{theorem}
A matching tree for $G$ yields an acyclic partial matching on the face poset of $Ind(G)$ whose critical cells are given by the non-empty sets $\Sigma(A,B)$ labeling non-root leaves of the matching tree. In particular, for each such set $\Sigma(A,B)$, the set $A$ yields a critical cell in Ind($G$). 
\end{theorem}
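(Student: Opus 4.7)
The plan is to construct the matching one node of the tree at a time, verify each local matching is acyclic using the toggling lemma (Lemma~\ref{lem:toggle}), and assemble them via the Patchwork Theorem (Theorem~\ref{thm:patchwork}). At each internal node labeled $\Sigma(A,B)$: if step (1) is applied with free vertex $v$, I toggle on $v$ within $\Sigma(A,B)$, which matches every element, since $N(v) \subseteq A \cup B$ together with the invariant $N(A) \subseteq B$ forces $N(v) \cap A = \emptyset$ (else $v \in N(A) \subseteq B$, contradicting $v \in V \setminus (A\cup B)$), hence $N(v) \subseteq B$, and so $I \cup \{v\}$ is always independent for $I \in \Sigma(A,B)$; if step (2) is applied with pivot $v$ and matching vertex $w$, toggling on $v$ pairs each $I$ with $I \cup \{v\}$ (or $I \setminus \{v\}$) exactly when $w \notin I$, since the same invariants give $N(v) \cap I \subseteq \{w\}$, leaving the unmatched elements to be precisely those of the child node $\Sigma(A \cup \{w\}, B \cup N(w))$; if step (3) is applied, no matching is created at this node, and $\Sigma(A,B)$ is partitioned into its two children according to whether $v \in I$ or not.

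To stitch these local matchings into a global one, I would define an order-preserving map $\varphi\colon \mathcal{F}(Ind(G)) \to Q$, where $Q$ is the set of step-(1) and step-(2) internal nodes together with the non-empty leaves of the tree, linearly ordered by a pre-order traversal. Each independent set $I$ is sent to the node at which it is first matched, or else to the leaf in which it finally resides. On a fiber over a step-(1) or step-(2) node, the partial matching is exactly the toggle above, which is acyclic by Lemma~\ref{lem:toggle}; on a leaf fiber there is no matching to define. Patchwork (Theorem~\ref{thm:patchwork}) then yields an acyclic partial matching on all of $\mathcal{F}(Ind(G))$.

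For the critical cells, observe that at any non-empty leaf $\Sigma(A,B)$ the algorithm must have halted with $A \cup B = V$ (otherwise one of steps (1), (2), or (3) would still apply), so combined with $A \cap B = \emptyset$ and $N(A) \subseteq B$ we obtain $\Sigma(A,B) = \{A\}$. Hence the unmatched elements of the constructed matching are exactly the sets $A$ labeling the non-empty non-root leaves, which is the claim.

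The main obstacle will be verifying that $\varphi$ is order-preserving, i.e., that whenever $I \subsetneq I'$ are both independent sets, $\varphi(I)$ precedes $\varphi(I')$ in the traversal order on $Q$. This reduces to tracking how containment interacts with the branching at step (3) nodes ($v \in I$ forces $v \in I'$, so both go right; $v \notin I'$ forces $v \notin I$, so both go left; in the mixed case $I$ and $I'$ split into different subtrees and the pre-order placement of these subtrees must be compatible with inclusion) together with a parallel check at step (2) nodes (if $w \in I$ then $w \in I'$, so $I'$ descends at least as far as $I$, and a matched $I$ and its descending partner $I'$ can only land at the current node or deeper). Once the linear order on $Q$ is arranged to respect this compatibility, the remainder is a routine application of the patchwork theorem; arranging this bookkeeping cleanly is the delicate step.
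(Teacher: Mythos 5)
The paper does not prove this theorem at all: it is imported verbatim from Bousquet-M\'elou--Linusson--Nevo (as restated by Braun and Hough), so there is no in-paper argument to compare against. Judged on its own, your reconstruction is sound and follows the standard proof strategy: the local analysis at each node type is correct (in particular the use of the invariants $A\cap B=\emptyset$ and $N(A)\subseteq B$ to show $N(v)\cap A=\emptyset$, hence that the toggle at a step-(1) node is perfect on its fiber and that the unmatched sets at a step-(2) node are exactly $\Sigma(A\cup\{w\},B\cup N(w))$), and the identification of non-empty leaves with singletons $\{A\}$ via $A\cup B=V$ is right, since $A$ is always independent by the invariants. The one piece you leave as ``delicate bookkeeping'' does in fact close cleanly, and you should say how: order $Q$ by the pre-order traversal that visits a node before its subtrees and visits the \emph{left} child $\Sigma(A,B\cup\{v\})$ before the right child $\Sigma(A\cup\{v\},B\cup N(v))$. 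Then order-preservation of $\varphi$ follows because whenever $I\subseteq I'$ separate at a step-(3) node the smaller set $I$ necessarily lacks the tentative pivot and goes left while $I'$ goes right (the reverse split is impossible since $v\in I$ forces $v\in I'$), and at a step-(2) node a set matched there has $w\notin I$, so any superset that descends lands strictly deeper, i.e.\ later in pre-order. With that choice fixed, each fiber carries a perfect toggle matching, acyclic by Lemma~\ref{lem:toggle}, and Theorem~\ref{thm:patchwork} assembles them as you describe; so your proof is complete once that ordering is made explicit.
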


 Thus far, we have provided combinatorial tools for determining the homotopy type of simplicial complexes. It is also possible to use more topological methods to approach homotopy type. This approach requires inductively determining the homotopy type of complexes of interest and appropriately ``gluing" these spaces over a common subspace. For a more detailed discussion see \cite[Section 4.G]{Hatcher} and \cite[Section 15.2]{Kozlov}.
The following lemma follows from \cite[Proposition 4G.1]{Hatcher}, where $X \vee Y$ is considered as a homotopy colimit. 

\begin{lemma}\label{lem:colim}
If $X$ and $Y$ are $2$ spaces such that $X \simeq_f X'$ and $Y \simeq_g Y'$, then $X \vee Y \simeq X' \vee Y'$. 
\end{lemma}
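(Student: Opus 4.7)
The plan is to exhibit $X \vee Y$ and $X' \vee Y'$ as homotopy colimits of comparable pushout-type diagrams and then invoke Hatcher's Proposition 4G.1, which is precisely the cited result. Concretely, I would view the wedge sum as the homotopy colimit of the diagram $\Gamma_X := (X \leftarrow \ast \rightarrow Y)$, where the two maps are the inclusions of the basepoint. Likewise, $X' \vee Y'$ is the homotopy colimit of $\Gamma_{X'} := (X' \leftarrow \ast \rightarrow Y')$.

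Next I would build a map between these diagrams over the common indexing graph (the span $\bullet \leftarrow \bullet \rightarrow \bullet$). On the apex vertex use the identity $\mathrm{id} : \ast \to \ast$, and on the two outer vertices use $f : X \to X'$ and $g : Y \to Y'$, respectively. The two required squares commute (up to homotopy) because the basepoints of $X$ and $Y$ are sent to the chosen basepoints of $X'$ and $Y'$; if the given homotopy equivalences $f$ and $g$ are not basepoint preserving on the nose, I would first compose with a homotopy moving $f(\mathrm{pt})$ and $g(\mathrm{pt})$ to the basepoints of $X'$ and $Y'$, which is harmless since we only work up to homotopy equivalence. By hypothesis, each of the three vertex maps $\mathrm{id}$, $f$, $g$ is a homotopy equivalence.

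Now I apply Hatcher's Proposition 4G.1: since each vertex map in the map of diagrams $\Gamma_X \to \Gamma_{X'}$ is a homotopy equivalence, the induced map
\[
\mathbf{hocolim}(\Gamma_X) \longrightarrow \mathbf{hocolim}(\Gamma_{X'})
\]
is itself a homotopy equivalence. Since $\mathbf{hocolim}(\Gamma_X) \simeq X \vee Y$ and $\mathbf{hocolim}(\Gamma_{X'}) \simeq X' \vee Y'$ by the standard identification recalled in Example~\ref{ex:StandardWedge} (or equivalently the observation that the wedge is the pushout along basepoint inclusions, which for cofibrant inclusions agrees with the homotopy pushout), we conclude $X \vee Y \simeq X' \vee Y'$.

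The main (and only) subtle point is the basepoint issue: Proposition 4G.1 is a statement about homotopy colimits, not strict colimits, so it is essential to interpret $X \vee Y$ as a homotopy pushout, and to handle the case where $f$ and $g$ are not strictly basepoint preserving. This is routinely addressed by adjusting $f$ and $g$ within their homotopy class so that basepoints match, which is possible because we only need the homotopy type of the resulting space. Once that bookkeeping is in place, the lemma is an immediate specialization of the cited general fact.
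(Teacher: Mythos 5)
Your proposal is correct and matches the paper's approach exactly: the paper gives no separate proof, simply noting that the lemma follows from Hatcher's Proposition 4G.1 by viewing $X \vee Y$ as the homotopy colimit of the span $X \leftarrow \ast \rightarrow Y$, which is precisely the diagram-level argument you spell out. Your additional care with the basepoint bookkeeping is a reasonable (and welcome) elaboration of a point the paper leaves implicit.
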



\section{Contractibility in $2$-matching complexes} \label{sec:contractible}

We begin this section by exploring graph properties that force contractible  $2$-matching complexes. 


\begin{observation} \label{obs:wedge}
 If $G$ and $H$ are $2$ disjoint graphs with leaves $v_1 \in V(G)$ and $v_2 \in V(H)$, it is immediate for $G \underset{v_1 \sim v_2}{\vee} H$ we have
\[ M_2(G \underset{v_1 \sim v_2}{\vee} H) = M_2(G) \ast M_2(H), \]
where $\ast$ denotes the topological join. 

\end{observation}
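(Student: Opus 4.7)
The plan is to show the equality of simplicial complexes directly at the level of faces, since the join $M_2(G)\ast M_2(H)$ is by definition $\{\sigma \cup \tau : \sigma \in M_2(G),\ \tau \in M_2(H)\}$. So it suffices to show that a subset $M \subseteq E(G) \cup E(H)$ is a $2$-matching of $G \underset{v_1 \sim v_2}{\vee} H$ if and only if $M \cap E(G)$ is a $2$-matching of $G$ and $M \cap E(H)$ is a $2$-matching of $H$.

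First I would set up the combinatorial bookkeeping. Let $v$ denote the vertex of the wedge obtained by identifying $v_1$ with $v_2$; then $V(G \vee H) = (V(G) \setminus \{v_1\}) \cup (V(H) \setminus \{v_2\}) \cup \{v\}$ and $E(G \vee H) = E(G) \cup E(H)$ (viewed as a disjoint union in which the unique edge $e_1$ of $G$ at $v_1$ and the unique edge $e_2$ of $H$ at $v_2$ are both reassigned to $v$). Because $v_1$ and $v_2$ were leaves, $\deg_{G\vee H}(v) = \deg_G(v_1) + \deg_H(v_2) = 2$, while every other vertex $u$ of the wedge has $\deg_{G \vee H}(u)$ equal to either $\deg_G(u)$ or $\deg_H(u)$.

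Next I would verify the equivalence. If $M_G \in M_2(G)$ and $M_H \in M_2(H)$, then for any vertex $u \neq v$ the degree of $u$ in $M_G \cup M_H$ equals its degree in whichever of $M_G, M_H$ it lies in, which is at most $2$; at $v$ the degree in $M_G \cup M_H$ is at most $\deg_{G \vee H}(v) = 2$, so $M_G \cup M_H$ is a $2$-matching of $G \vee H$. Conversely, any $2$-matching $M$ of $G \vee H$ restricts to edge sets $M \cap E(G)$ and $M \cap E(H)$ which are $2$-matchings of $G$ and $H$ respectively, since the degree constraints of $G$ and $H$ are the same as (or weaker than) those of $G \vee H$. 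Combined with the observation that $M_2(G)$ and $M_2(H)$ are simplicial complexes on disjoint vertex sets (namely $E(G)$ and $E(H)$), this yields precisely the definition of the join.

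The only potential subtlety is the edge at the wedge vertex $v$: one has to confirm that including both $e_1$ and $e_2$ in a single $2$-matching is permitted, and this is exactly where the leaf hypothesis is used, since $\deg_{G \vee H}(v) = 2$. This makes the statement truly immediate, and I do not anticipate any real obstacle beyond stating these identifications cleanly.
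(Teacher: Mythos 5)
Your proof is correct and is exactly the direct verification the paper has in mind when it calls the statement ``immediate'' (the paper supplies no written proof for this observation). The key point you isolate---that the leaf hypothesis makes the identified vertex have degree $2$, so both incident edges may coexist in a $2$-matching---is precisely what makes the face-level identification with the join go through.
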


Recall that $G_{(v_1, v_2)}$ is the graph obtained by identifying the vertices $v_1$ and $v_2$ in $G$.  

\begin{proposition} \label{prop:leafid}
Let $G$ be a graph with at least $3$ vertices. If $v_1, v_2 \in V(G)$ are $2$ leaves of a graph $G$, then $M_2(G) =  M_2(G_{(v_1,v_2)})$. 
\end{proposition}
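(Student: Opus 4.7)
The plan is to exhibit a natural bijection between the edge sets of $G$ and $G_{(v_1,v_2)}$ that carries $2$-matchings to $2$-matchings, yielding an identification of the two simplicial complexes. Following the notation introduced earlier in the paper, write $\{w_1, v_1\}$ and $\{w_2, v_2\}$ for the unique edges incident to the leaves $v_1$ and $v_2$, and define $\phi : E(G) \to E(G_{(v_1,v_2)})$ by
\[
\phi(\{w_1, v_1\}) = \{w_1, v_1v_2\}, \qquad \phi(\{w_2, v_2\}) = \{w_2, v_1v_2\},
\]
and $\phi(e) = e$ for every other edge. This is a bijection on vertex sets of the two $2$-matching complexes, and I would extend it to subsets of edges in the obvious way. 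The claim then reduces to showing $H \subseteq E(G)$ is a $2$-matching if and only if $\phi(H)$ is.

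For this, I would argue vertex by vertex. For any vertex $x \notin \{v_1, v_2, v_1v_2\}$, the incidences of $x$ in $G$ and in $G_{(v_1,v_2)}$ are identical under $\phi$, so $\deg_H(x) = \deg_{\phi(H)}(x)$, and the $2$-matching condition at $x$ transfers. The leaves $v_1, v_2 \in V(G)$ each have degree one in $G$, so $\deg_H(v_i) \le 1 \le 2$ holds trivially for every $H$. Dually, the identified vertex $v_1v_2 \in V(G_{(v_1,v_2)})$ has total degree two in $G_{(v_1,v_2)}$, so $\deg_{\phi(H)}(v_1v_2) \le 2$ holds trivially for every $H$. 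Hence the $2$-matching condition at the three ``special'' vertices imposes no constraint on either side, and the bijection $\phi$ restricts to a bijection between $2$-matchings of $G$ and $2$-matchings of $G_{(v_1,v_2)}$. This is precisely a simplicial isomorphism $M_2(G) \cong M_2(G_{(v_1,v_2)})$, which is the asserted equality after identifying vertices through $\phi$.

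I do not anticipate any real obstacle; the argument is essentially a bookkeeping check on degrees. The one subtlety worth flagging is that the identification must produce a simple graph, i.e., $w_1 \ne w_2$, since otherwise the two edges incident to the leaves would collapse to a single edge in $G_{(v_1,v_2)}$ and the bijection $\phi$ would fail to be defined (the vertex count of $M_2(G_{(v_1,v_2)})$ would drop by one relative to $M_2(G)$, and the face $\{\{w_1,v_1\},\{w_1,v_2\}\} \in M_2(G)$ would have no image). The hypothesis that $G$ has at least three vertices, together with the implicit standing assumption that $G$ and $G_{(v_1,v_2)}$ are both simple graphs in the author's framework, rules this out.
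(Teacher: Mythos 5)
Your proof is correct and follows essentially the same route as the paper's: both rest on the observation that the edge sets of $G$ and $G_{(v_1,v_2)}$ are naturally identified and that the degree-at-most-$2$ condition is unaffected, since the identified vertex has degree exactly $2$ while the leaves had degree $1$. Your version is simply more explicit (checking both directions vertex by vertex and flagging the degenerate case where the two leaves share a neighbor), whereas the paper states the one-line observation and stops.
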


\begin{proof}
Let $G$ be a graph with leaves $v_1, v_2$ and $H \subseteq E(G)$ a $2$-matching. Since all vertices $v \in V(H)$ have degree at most $2$, identifying the $2$ leaves $v_1, v_2$ does not affect $H$. So, $H \subseteq E(G) = E(G_{(v_1,v_2)})$ is also a $2$-matching of $G_{(v_1,v_2)}$. 
\end{proof}

\begin{theorem}\label{thm:bridge}
Let $G$ be a graph with $e=\{x,y\} \in E(G)$ such that deg$(x) \leq 2$ and deg$(y) \leq 2$. Then $M_2(G)$ is contractible. 
\end{theorem}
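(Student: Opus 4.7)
The plan is to exhibit $\{e\}$ as a cone point of $M_2(G)$. Recall from Section~\ref{sec:background} that if $\sigma \cup \{x\} \in \Delta$ for every face $\sigma$ of a simplicial complex $\Delta$, then $\Delta$ is a cone with cone point $\{x\}$, and every cone is contractible. So it suffices to show: for every $2$-matching $H$ of $G$, the set $H \cup \{e\}$ is again a $2$-matching of $G$.

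To verify this, I will argue vertex by vertex. The only vertices whose degree in $H \cup \{e\}$ differs from their degree in $H$ are the endpoints $x$ and $y$ of $e$, and only in the case $e \notin H$. Assume $e \notin H$. Since $\deg_G(x) \leq 2$ and $e$ is one of the edges incident to $x$ in $G$, the edges of $H$ incident to $x$ form a subset of $E(G) \setminus \{e\}$ restricted to those touching $x$, so the degree of $x$ in $H$ is at most $\deg_G(x) - 1 \leq 1$. Consequently the degree of $x$ in $H \cup \{e\}$ is at most $2$. The identical argument applied to $y$ gives $\deg_{H \cup \{e\}}(y) \leq 2$. All other vertices of $V(H \cup \{e\}) = V(H) \cup \{x,y\}$ retain their degree from $H$, which was already at most $2$ by hypothesis. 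Hence $H \cup \{e\}$ is a $2$-matching.

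Therefore $\{e\}$ is a cone point of $M_2(G)$, and the complex is contractible. There is essentially no obstacle here: the hypothesis $\deg(x), \deg(y) \leq 2$ is precisely what is needed to guarantee that adjoining $e$ never pushes the degree of either endpoint past $2$, which is the sole constraint in the definition of a $2$-matching. The argument is just a degree count and does not require discrete Morse theory or any of the more intricate machinery developed earlier in the background section.
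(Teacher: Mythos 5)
Your proof is correct and is essentially the paper's argument: both exhibit $e$ as a cone point of $M_2(G)$ (the paper phrases this as $M_2(G) \simeq e \ast M_2(G \smallsetminus e)$), the only difference being that you spell out the degree count that the paper leaves implicit. No further comment needed.
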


\begin{proof}
Since both endpoints of $e$ have degree at most $2$, $e$ may be included in any $2$-matching of $G \smallsetminus e$ and $M_2(G) \simeq e \ast M_2(G \smallsetminus e)$. Hence, $M_2(G)$ is a cone and therefore contractible. 
\end{proof}

Theorem~\ref{thm:bridge} gives rise to a large class of graphs that have contractible $2$-matching complexes. We will now explore $2$-matching complexes that are close to but not contractible. In particular, we turn our attention to clawed graphs. We begin by considering clawed paths. In the following proposition, we use the well-known fact that for $2$ spheres $S^m$ and $S^n$, $S^m \ast S^n \simeq S^{m+n+1}$.

\begin{proposition} \label{prop:clawed_paths}
For $n \geq 0$, let $CP_n$ be a clawed path with respect to a path of length $n$. Then, $M_2(CP_n) \simeq S^{2n+1}$.  
\end{proposition}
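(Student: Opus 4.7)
The plan is to apply Lemma~\ref{lem:claw} with $\mathcal{C}$ chosen to be the collection of all induced claw units of $CP_n$. First I would describe $CP_n$ explicitly: label the path $P_n$ by $v_0, v_1, \ldots, v_n$, so the clawing procedure subdivides each of its $n$ edges (inserting degree-$2$ vertices $w_1, \ldots, w_n$) and then attaches leaves so that each $v_i$ has degree exactly $3$ — two leaves to each endpoint and one to each internal vertex when $n \geq 1$, and three leaves to the single vertex when $n = 0$. In every case the vertices $v_0, \ldots, v_n$ are precisely the degree-$3$ vertices of $CP_n$, and every other vertex has degree at most $2$. Hence each $v_i$ is the center of an induced claw unit $c_i$, and $\mathcal{C} = \{c_0, c_1, \ldots, c_n\}$ has $n+1$ elements.

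Applying Lemma~\ref{lem:claw} then gives
\[ M_2(CP_n) \simeq S^{2(n+1)-1} \ast M_2(CP_n \smallsetminus \mathcal{C}) = S^{2n+1} \ast M_2(CP_n \smallsetminus \mathcal{C}). \]
Next I would observe that $CP_n \smallsetminus \mathcal{C}$ has no edges: every edge of $CP_n$ has at least one endpoint in $\{v_0, \ldots, v_n\}$, so deleting those central vertices leaves only isolated $w_i$'s and leaves. Consequently $M_2(CP_n \smallsetminus \mathcal{C})$ is the irrelevant complex $\{\emptyset\}$, which is the identity for the topological join, and we conclude $M_2(CP_n) \simeq S^{2n+1}$.

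The main obstacle I expect is purely a bookkeeping sanity check for Lemma~\ref{lem:claw}: when $n \geq 1$ the consecutive claws $c_{i-1}$ and $c_i$ share the subdivision vertex $w_i$, so $\mathcal{C}$ is not vertex-disjoint. However, the join decomposition in the proof of Lemma~\ref{lem:claw} only requires that any choice of $2$-matchings from the individual $c_i$ combine into a valid $2$-matching of $CP_n$; since $w_i$ lies in exactly one edge of each of $c_{i-1}$ and $c_i$, taking both contributes degree at most $2$ at $w_i$, so the decomposition still applies. Once this is confirmed the homotopy computation is immediate, and the base case $n = 0$ (where $CP_0 = K_{1,3}$ and $|\mathcal{C}| = 1$) is handled by the same formula, recovering $M_2(K_{1,3}) \simeq S^1$.
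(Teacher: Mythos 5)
Your proof is correct, but it takes a different route from the paper's. The paper proceeds by induction on $n$ using the wedge decomposition of Observation~\ref{obs:wedge}: it writes $CP_n = CP_{n-1} \vee CP_0$ (two graphs glued at a vertex that is a leaf of each), so that $M_2(CP_n) = M_2(CP_{n-1}) \ast M_2(CP_0) \simeq S^{2(n-1)+1} \ast S^1 \simeq S^{2n+1}$, with base case $CP_0 = K_{1,3}$ and $M_2(K_{1,3}) \simeq S^1$. You instead apply Lemma~\ref{lem:claw} in one shot to the full collection $\mathcal{C}$ of $n+1$ induced claw units centered at $v_0,\dots,v_n$, note that $CP_n \smallsetminus \mathcal{C}$ is edgeless so its $2$-matching complex $\{\emptyset\}$ is the identity for the join, and read off $S^{2(n+1)-1} = S^{2n+1}$. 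This is essentially the specialization to paths of the paper's later Theorem~\ref{thm:homotopyType} for general clawed graphs, and your sanity check about consecutive claws sharing a subdivision vertex $w_i$ is exactly the right point to verify: the join decomposition in Lemma~\ref{lem:claw} needs arbitrary unions of $2$-matchings from the individual claws to remain $2$-matchings, which holds because $w_i$ meets exactly one edge of each of $c_{i-1}$ and $c_i$. What the paper's inductive approach buys is that it only relies on the elementary Observation~\ref{obs:wedge} and avoids invoking the discrete-Morse machinery of Lemma~\ref{lem:claw} at all; what your approach buys is a uniform, induction-free argument that generalizes immediately beyond paths (indeed to all clawed graphs), at the cost of leaning on the stronger lemma. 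Both are valid derivations of $M_2(CP_n) \simeq S^{2n+1}$.
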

\begin{proof}
Since $P_0$ consists of $1$ vertex and no edges, we have $CP_0 = K_{3,1}$. See Figure~\ref{fig:CP01}. It follows that $M_2(CP_0) \simeq S^1$. Consider now a clawed path of length $1$, $CP_1$ consists of $2$ copies of $K_{3,1}$ intersecting at $1$ vertex. By Observation~\ref{obs:wedge} we have $M_2(CP_1) = M_2(CP_0 \vee CP_0) = M_2(CP_0) \ast M_2(CP_0) = S^1 \ast S^1 \simeq S^{1+1+1} = S^3$. Continuing inductively, we have $M_2(CP_n) = M_2(CP_{n-1} \vee CP_0) \simeq S^{2(n-1) +1} \ast S^1 \simeq S^{2n-2 +3} = S^{2n+1}$.  
\end{proof} 

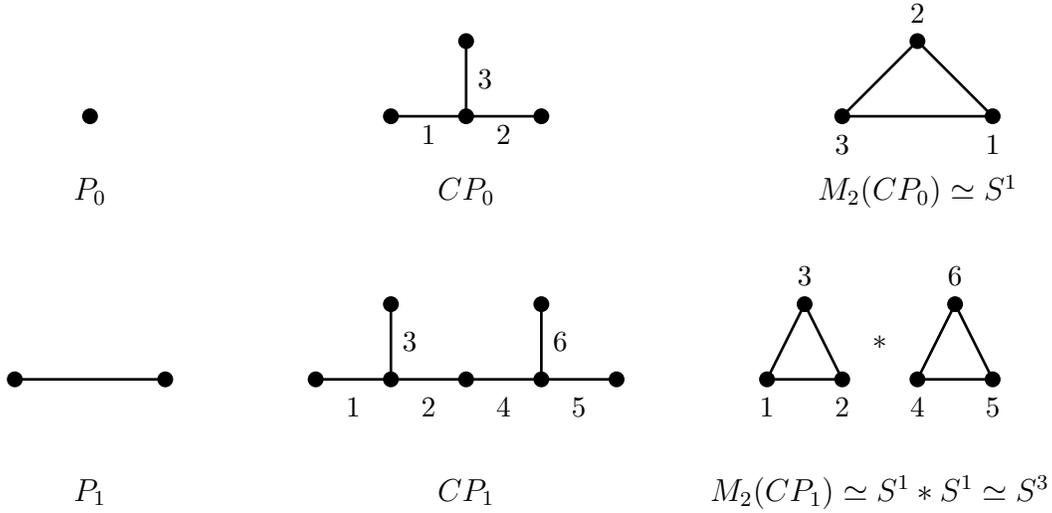
\begin{figure}
\begin{center}
\begin{tikzpicture}[scale = 0.5, every node/.style={}]

\fill[] (-10,5) circle (6pt);
\node at (-10,3) {\large{$P_0$}};

\node at (0,3) {\large{$CP_0$}};
\fill[] (-2,5) circle (6pt);
\fill[] (0,5) circle (6pt);
\fill[] (2,5) circle (6pt);
\fill[] (0,7) circle (6pt);

\draw[line width=1pt] (0,5) --
(-2,5);
\draw[line width=1pt] (0,5) --
(2,5);
\draw[line width=1pt] (0,5) --
(0,7);

\node at (-1,4.5) {$1$};
\node at (1,4.5) {$2$};
\node at (0.5,6) {$3$};

\node at (12,3) {\large{$M_2(CP_0) \simeq S^1$}};
\fill[] (10,5) circle (6pt);
\fill[] (14,5) circle (6pt);
\fill[] (12,7) circle (6pt);

\node at (10,4.25) {$3$};
\node at (12,7.75) {$2$};
\node at (14,4.25) {$1$};

\draw[line width=1pt] (10,5) --
(14,5);
\draw[line width=1pt] (10,5) --
(12,7);
\draw[line width=1pt] (12,7) --
(14,5);

\node at (-10,-5) {\large{$P_1$}};
\fill[] (-12,-2) circle (6pt);
\fill[] (-8,-2) circle (6pt);
\draw[line width=1pt] (-12,-2) --
(-8,-2);

\node at (0,-5) {\large{$CP_1$}};
\draw[line width=1pt] (-4,-2) --
(4,-2);
\fill[] (0,-2) circle (6pt);
\fill[] (-2,-2) circle (6pt);
\fill[] (2,-2) circle (6pt);
\fill[] (-4,-2) circle (6pt);
\fill[] (4,-2) circle (6pt);
\fill[] (2,0) circle (6pt);
\fill[] (-2,0) circle (6pt);
\draw[line width=1pt] (-2,-2) --
(-2,0);
\draw[line width=1pt] (2,-2) --
(2,0);

\node at (-3,-2.75) {$1$};
\node at (-1,-2.75) {$2$};
\node at (1,-2.75) {$4$};
\node at (3,-2.75) {$5$};
\node at (-1.5,-1) {$3$};
\node at (2.5,-1) {$6$};

\node at (11,-5) {\large{$M_2(CP_1) \simeq S^1 \ast S^1 \simeq S^3$}};
\fill[] (8,-2) circle (6pt);
\fill[] (10,-2) circle (6pt);
\fill[] (9,0) circle (6pt);
\draw[line width=1pt] (8,-2) --
(10,-2) -- (9,0) -- (8,-2);

\fill[] (12,-2) circle (6pt);
\fill[] (14,-2) circle (6pt);
\fill[] (13,0) circle (6pt);
\draw[line width=1pt] (12,-2) --
(14,-2) -- (13,0) -- (12,-2);

\node at (11,-1) {$\ast$};

\node at (8,-2.75) {$1$};
\node at (10,-2.75) {$2$};
\node at (9,0.75) {$3$};
\node at (12,-2.75) {$4$};
\node at (14,-2.75) {$5$};
\node at (13,0.75) {$6$};

\end{tikzpicture}
\caption{$M_2(CP_0)$ and $M_2(CP_1)$ as in proof of Proposition~\ref{prop:clawed_paths}}
\label{fig:CP01}
\end{center}
\end{figure}


\begin{corollary} 
$M_2(CP_{n-1}) \simeq M_2(CC_{n}) \simeq S^{2n-1}.$
\end{corollary}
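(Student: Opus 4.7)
The first equivalence $M_2(CP_{n-1}) \simeq S^{2n-1}$ is immediate from Proposition~\ref{prop:clawed_paths} by substituting $n-1$: we get $S^{2(n-1)+1} = S^{2n-1}$. So the real task is to establish $M_2(CC_n) \simeq S^{2n-1}$.

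The plan is to apply Lemma~\ref{lem:claw} to a natural collection of $n$ induced claw units in $CC_n$, one centered at each original cycle vertex. By definition, $CC_n$ is built from $C_n$ by subdividing each edge (producing $n$ new subdivision vertices of degree $2$) and attaching one leaf to each of the $n$ original cycle vertices (bringing their degree up to $3$). Each original vertex $v_i$ together with its three incident edges — two going to subdivision vertices, one to a new leaf — thus forms an induced claw unit $c_i$ whose three non-center endpoints have degrees $2,2,1$ in $CC_n$, satisfying the definition. I will note that although adjacent claws $c_i$ and $c_{i+1}$ share the subdivision vertex $w_{i,i+1}$, their edge sets are disjoint: every edge of $CC_n$ has exactly one original-cycle-vertex endpoint, so it belongs to exactly one claw.

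Feeding this collection $\mathcal{C}=\{c_1,\dots,c_n\}$ into Lemma~\ref{lem:claw} yields
\[
M_2(CC_n) \;\simeq\; S^{2n-1} \ast M_2(CC_n \smallsetminus \mathcal{C}).
\]
After deleting all $n$ original cycle vertices (and all edges incident to them), every surviving vertex — subdivision vertex or leaf — loses all of its neighbors, so $CC_n \smallsetminus \mathcal{C}$ is an edgeless graph. Its $2$-matching complex is then the trivial complex $\{\emptyset\}$, which is a join identity, and we conclude $M_2(CC_n) \simeq S^{2n-1}$.

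The one delicate point in this plan is confirming that Lemma~\ref{lem:claw} really does apply to claws that meet at subdivision vertices. I expect this to be routine: each claw $c_i$ remains an induced $K_{1,3}$ after any subset of the other centers is removed, since degrees of non-center endpoints of $c_i$ only drop; and in any $2$-matching of $CC_n$, a shared subdivision vertex $w_{i,i+1}$ has at most one incident edge from $c_i$ and at most one from $c_{i+1}$, automatically respecting the degree-two cap. Hence $2$-matchings of $CC_n$ decompose freely as choices of $2$-matchings in each $c_i$, exactly as needed for the join $M_2(c_1)\ast\cdots\ast M_2(c_n) \simeq (S^1)^{\ast n} \simeq S^{2n-1}$.
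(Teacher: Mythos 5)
Your argument is correct, but it takes a genuinely different route from the paper. The paper's proof is a one-liner: $CC_n$ is obtained from $CP_{n-1}$ by identifying two of its end-leaves, so Proposition~\ref{prop:leafid} gives an actual identity of simplicial complexes $M_2(CC_n) = M_2(CP_{n-1})$, and the sphere then comes for free from Proposition~\ref{prop:clawed_paths}. You instead re-derive the homotopy type of $M_2(CC_n)$ from scratch by exhibiting the canonical claw decomposition of $CC_n$ (one claw per original cycle vertex), checking edge-disjointness and the induced-$K_{1,3}$ condition, and invoking Lemma~\ref{lem:claw} together with the observation that deleting all $n$ centers leaves an edgeless graph whose $2$-matching complex is the join identity $\{\emptyset\}$. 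Your verification that shared degree-$2$ subdivision vertices do not obstruct the join decomposition is the right point to worry about and is handled correctly. What the paper's route buys is brevity and a stronger conclusion (equality of complexes, not just homotopy equivalence); what your route buys is that it is really a special case of Theorem~\ref{thm:homotopyType} (here $|E(CC_n)| = 3n$, giving $S^{2n-1}$), so it generalizes immediately to any clawed graph rather than relying on the specific path-to-cycle leaf identification.
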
 
\begin{proof}
The result follows from Proposition~\ref{prop:leafid}; see Figure~\ref{fig:path_to_cycle}.
\end{proof} 
In the next proposition, we see that, even further, the $2$-matching complex for a clawed cycle shares its homotopy type with the $2$-matching complex of a fully whiskered cycle. Although there are no induced claws in a fully whiskered cycle, we will see that, in this case, we can treat induced $K_{3,1}$ subgraphs in a similar manner.

\begin{definition}
A \emph{fully whiskered graph} $WG$ is a graph in which a leaf is attached to every vertex of the graph $G$. 
\end{definition}

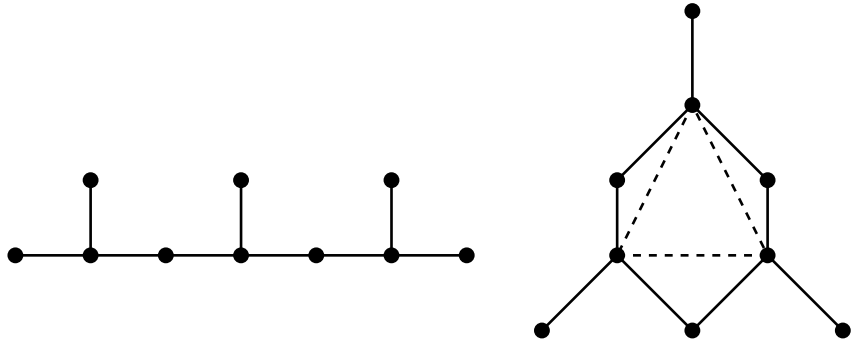
\begin{figure}[!hbt]
\begin{center}
\begin{tikzpicture}[scale = 0.5, every node/.style={}]


\draw[line width=1pt] (-4,-2) --
(4,-2);
\fill[] (0,-2) circle (6pt);
\fill[] (-2,-2) circle (6pt);
\fill[] (2,-2) circle (6pt);
\fill[] (-4,-2) circle (6pt);
\fill[] (4,-2) circle (6pt);
\fill[] (2,0) circle (6pt);
\fill[] (-2,0) circle (6pt);
\draw[line width=1pt] (-2,-2) --
(-2,0);
\draw[line width=1pt] (2,-2) --
(2,0);

\fill[] (-6,-2) circle (6pt);
\fill[] (-8,-2) circle (6pt);
\fill[] (-6,0) circle (6pt);
\draw[line width=1pt] (-6,-2) --
(-8,-2);
\draw[line width=1pt] (-4,-2) --
(-6,-2);
\draw[line width=1pt] (-6,0) --
(-6,-2);

\fill[] (12,0) circle (6pt);
\fill[] (12,-2) circle (6pt);
\fill[] (10,-4) circle (6pt);
\fill[] (8,-2) circle (6pt);
\fill[] (8,0) circle (6pt);
\fill[] (10,2) circle (6pt);

\draw[line width=1pt] (12,0) --
(12,-2) -- (10,-4) -- (8,-2) -- (8,0) -- (10, 2) -- (12,0);

\fill[] (6,-4) circle (6pt);
\fill[] (14,-4) circle (6pt);
\fill[] (10,4.5) circle (6pt);

\draw[line width=1pt] (8,-2) --
(6,-4);
\draw[line width=1pt] (12,-2) --
(14,-4);
\draw[line width=1pt] (10,4.5) --
(10,2);

\draw[dashed,line width=1pt] (8,-2) --
(12,-2) -- (10,2) -- (8,-2);

\end{tikzpicture}
\caption{On the left graph $CP_2$, the clawed path of length $2$ and on the right $CC_3$, the clawed $3$-cycle obtained by identifying the endpoints of $CP_2$. The core $3$-cycle is shown with dashed lines.}
\label{fig:path_to_cycle}
\end{center}

\end{figure}

\begin{proposition} \label{prop:full_even_cycle}
Let $WC_m$ denote a fully whiskered $2m$-cycle graph for $m \geq 3$. $M_2(WC_{m}) \simeq S^{2m-1}$. 
\end{proposition}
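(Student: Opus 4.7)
The strategy is to build a discrete Morse matching on $\mathcal{F}(M_2(WC_m))$ by toggling on the leaf edges one at a time, in direct analogy with the claw-induced matching of Lemma~\ref{lem:claw}. Although the induced $K_{3,1}$ subgraph sitting at each cycle vertex $v_i$ is not an induced claw unit (the cycle neighbors of $v_i$ have degree $3$, not $\leq 2$), the leaf edge at $v_i$ still plays the role of the toggle edge $x$ in Proposition~\ref{prop:bijections}: any $2$-matching that fails to extend by this leaf edge must already use both cycle edges at $v_i$.

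To set notation, I would label the cycle vertices $v_1, \dots, v_{2m}$ (indices read modulo $2m$), write $w_i$ for the leaf attached to $v_i$, and set $e_i = \{v_i, v_{i+1}\}$ and $\ell_i = \{v_i, w_i\}$. I would then toggle on $\ell_1, \ell_2, \dots, \ell_{2m}$ in this order. The key claim, proved by induction on $k$, is that after toggling on $\ell_1, \dots, \ell_k$ the unmatched subposet consists of exactly those $2$-matchings $H$ with
\[
\{e_{2m}, e_1, e_2, \dots, e_k\} \subseteq H \quad\text{and}\quad \{\ell_1, \dots, \ell_k\} \cap H = \emptyset.
\]
For the base case, pairing $H$ with $H \triangle \{\ell_1\}$ succeeds precisely when $\deg_H(v_1) \leq 1$, and failure forces both cycle edges $e_{2m}$ and $e_1$ at $v_1$ into $H$. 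For the inductive step, working inside the subposet obtained from the $(k-1)$st toggle, the edge $e_{k-1}$ is already in $H$, so $\deg_H(v_k) \geq 1$; toggling on $\ell_k$ then fails to pair $H$ exactly when $\deg_H(v_k) = 2$, which adds the new constraint $e_k \in H$. Acyclicity at each stage follows from Lemma~\ref{lem:toggle} combined with the Patchwork Theorem (Theorem~\ref{thm:patchwork}), exactly as in Lemma~\ref{lem:claw}.

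Applying the claim with $k = 2m$ shows the only unmatched $2$-matching is $\{e_1, \dots, e_{2m}\}$: every cycle edge is forced, and no leaf edge can be present since each $v_i$ already has degree $2$ from its two incident cycle edges. This yields a single critical cell of dimension $2m-1$, and the empty face is paired with $\{\ell_1\}$ by the first toggle, so Theorem~\ref{thm:main_mta} produces $M_2(WC_m) \simeq S^{2m-1}$. The main subtlety to watch for is the consistency of the cascading constraints as they wrap around the cycle — in particular, that each toggle forces exactly one additional cycle edge and that the loop closes up without contradiction with the very first constraint $e_{2m} \in H$. This is the reason for toggling in a fixed cyclic order and is where the cyclic structure of $WC_m$ (as opposed to a clawed path) genuinely enters.
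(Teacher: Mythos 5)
Your proof is correct and follows essentially the same route as the paper: a discrete Morse matching on $\mathcal{F}(M_2(WC_m))$ obtained by toggling on leaf edges, leaving a single critical cell consisting of all $2m$ cycle edges and hence $S^{2m-1}$. The only difference is that the paper toggles on just $m$ leaf edges at alternating cycle vertices (so each toggle independently forces both cycle edges at its vertex, exactly as in Lemma~\ref{lem:claw}), whereas you toggle on all $2m$ leaves in cyclic order via a cascading induction; both matchings produce the same critical cell and the same conclusion.
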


\begin{proof}
Label the edges of the cycle by $1,2,...,2m$ and each leaf edge by $x_{i,i+1}$ for  $i \in [2m-1]$, and $x_{1,2m}$, where the index corresponds to the incident edges in the cycle as in Figure~\ref{fig:discrete_morse_cycles}. Let the edge set $c_i := \{x_{i, i+1}, i, i+1\}$ for each $i \in \{ 1,3,5, \dots ,2m-1\}$ denote an induced $K_{3,1}$ subgraph of $WC_m$. Then the collection $\mathcal{C} = \{c_1,c_3, \dots, c_{2m-1} \}$ of subgraphs defines a family of $m$ induced  induced $K_{3,1}$ subgraphs that are edge disjoint. If this were not the case, then there would exist an edge $j \in E(WC_m)$ that would be an edge in $2$ claws, but by the labeling system this would mean that $j = j+1$ which is a contradiction to the edge labels on the cycle. Notice that the collection $\mathcal{C}$ does not fully partition the graph, nonetheless performing a toggle on $1$ edge of each  induced $K_{3,1}$ subgraph gives rise to a complete matching. 
Following the proof of Lemma~\ref{lem:claw}, for each $i \in \{ 1,3,5, \dots ,2m-1\}$ let $x_{i,i+1}$ be the toggle edge in the discrete Morse matching on the face poset of $M_2(WC_m)$. It follows that
$M_2(WC_m) \simeq S^{2m -1} *M_2(WC_m \smallsetminus \mathcal{C})$ and since $M_2(WC_m \smallsetminus \mathcal{C})$ is contractible, the result follows. The only critical cell is $\{1,2, \dots, 2m\}$.
\end{proof}

\begin{corollary}\label{cor:sim}
$M_2(WC_n) \simeq M_2(CC_n)$ for $n\geq3$.
\end{corollary}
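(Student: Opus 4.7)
The plan is to observe that both sides of the claimed equivalence have already been identified with the same sphere in earlier results, so the corollary is obtained by transitivity of $\simeq$. Specifically, the corollary immediately preceding Proposition~\ref{prop:full_even_cycle} established that $M_2(CC_n) \simeq S^{2n-1}$, and Proposition~\ref{prop:full_even_cycle} (applied with $m=n$, since $WC_n$ denotes the fully whiskered $2n$-cycle) gives $M_2(WC_n) \simeq S^{2n-1}$. Concatenating these yields
\[
M_2(WC_n) \;\simeq\; S^{2n-1} \;\simeq\; M_2(CC_n),
\]
which is exactly the statement. Because the work has already been done in the two preceding results, there is no real obstacle; the corollary is best read as a conceptual observation: the additional whiskers attached to the subdivision vertices in $WC_n$ (as compared to $CC_n$, which only has whiskers on the original cycle vertices) do not change the homotopy type of the $2$-matching complex.

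If one preferred a self-contained argument that bypasses invoking both earlier results, I would mimic the claw-induced matching used in the proof of Proposition~\ref{prop:full_even_cycle} in parallel on the two graphs. In each of $WC_n$ and $CC_n$, the $n$ induced $K_{1,3}$ subgraphs $c_i$ centered at the alternating cycle vertices $v_i$ (those that carry a whisker in both graphs) are edge-disjoint, with the same $3n$ edges $\{x_{i,i+1}, i, i{+}1\}$ appearing in $\bigcup c_i$. Toggling on the leaf edge $x_{i,i+1}$ in each $c_i$, Lemma~\ref{lem:claw} gives
\[
M_2(WC_n)\simeq S^{2n-1}\ast M_2(WC_n\smallsetminus \mathcal{C}), \qquad M_2(CC_n)\simeq S^{2n-1}\ast M_2(CC_n\smallsetminus \mathcal{C}).
\]
The residual graphs $WC_n\smallsetminus\mathcal{C}$ and $CC_n\smallsetminus\mathcal{C}$ are both edge-disjoint unions of pendant edges (and isolated vertices), so their $2$-matching complexes are simplices (in particular contractible), and the join with $S^{2n-1}$ preserves the sphere homotopy type. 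The expected main (minor) point in this alternative route is simply checking that in $CC_n$ the vertices stripped of the $c_i$'s leave a disjoint union with trivial $M_2$, which is immediate from the structure of the clawed cycle.
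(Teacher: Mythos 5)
Your first paragraph is exactly the paper's (implicit) argument: the corollary follows by transitivity from the preceding corollary giving $M_2(CC_n)\simeq S^{2n-1}$ and Proposition~\ref{prop:full_even_cycle} giving $M_2(WC_n)\simeq S^{2n-1}$, with the indexing handled correctly. The alternative parallel-matching sketch is unnecessary (and would need care, since the $K_{1,3}$ subgraphs centered at the whiskered cycle vertices of $WC_n$ are not \emph{induced claw units} in the paper's sense, so Lemma~\ref{lem:claw} does not apply verbatim there), but the primary route stands on its own.
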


In Proposition~\ref{prop:full_even_cycle}, we considered fully whiskered $2m$-cycle graphs because we are interested in aligning this result with clawed path graphs, but there is no reason why we could not apply the same reasoning for fully whiskered odd-cycle graphs.

\begin{theorem}\label{rmk:full_odd_cycle}
Let $WC_n^d$ denote a fully whiskered $n$-cycle graph for odd $n$. Then, $M_2(WC_n^d) \simeq S^{n-1}$. 
\end{theorem}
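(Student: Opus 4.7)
My plan is to mirror the toggle-based strategy of Proposition~\ref{prop:full_even_cycle}, but with one extra, overlapping, whisker-toggle to compensate for the fact that an odd-length cycle cannot be covered by pairs of adjacent edges.

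Label the cycle edges of $WC_n^d$ by $1,2,\ldots,n$ and, following the notation of Proposition~\ref{prop:full_even_cycle}, write $x_{i,i+1}$ for the whisker at the cycle vertex incident to edges $i$ and $i+1$ (subscripts modulo $n$). I would toggle in $\mathcal{F}(M_2(WC_n^d))$ on the $(n+1)/2$ whiskers $x_{1,2}, x_{3,4}, \ldots, x_{n-2,n-1}, x_{n,1}$, in that order. The first $(n-1)/2$ of these correspond to pairwise edge-disjoint $K_{1,3}$ subgraphs, exactly as in the even case, while the final whisker belongs to the $K_{1,3}$ $\{x_{n,1},n,1\}$ at the ``seam'' vertex, which overlaps with the first in the cycle edge $1$. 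By Lemma~\ref{lem:toggle} and Theorem~\ref{thm:patchwork}, the resulting partial matching is acyclic.

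A $2$-matching $a$ survives this sequence only if, for every toggled whisker $x$, we have $x\notin a$ and $a\cup\{x\}$ fails to be a $2$-matching. The latter occurs precisely when both cycle edges in the $K_{1,3}$ containing $x$ already lie in $a$; applying this to each of the $(n+1)/2$ toggled whiskers in turn forces every cycle edge $1,2,\ldots,n$ to lie in $a$. At each cycle vertex the degree in $a$ is then already $2$, so no whisker can be adjoined to $a$. Hence the unique critical cell is $\{1,2,\ldots,n\}$, of dimension $n-1$. Since the empty set is paired with $\{x_{1,2}\}$ at the first toggle, Theorem~\ref{thm:main_mta} identifies $M_2(WC_n^d)$ with a CW complex built from a single $0$-cell and a single $(n-1)$-cell, which has homotopy type $S^{n-1}$.

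The main obstacle is really just bookkeeping: because the final $K_{1,3}$ shares the cycle edge $1$ with the first, the setup does not fit the framework of Lemma~\ref{lem:claw} literally (the $K_{1,3}$'s are not edge-disjoint, and the join decomposition there does not apply). I would instead rely directly on Lemma~\ref{lem:toggle}, which only requires that the toggled elements be distinct vertices of the face poset---a condition satisfied since the $(n+1)/2$ toggled whiskers are all distinct edges of $WC_n^d$.
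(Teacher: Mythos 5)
Your argument is correct and is essentially the paper's own proof: both perform the claw-induced toggles on the $(n-1)/2$ edge-disjoint whiskers covering cycle edges $1,\dots,n-1$ and then make one additional overlapping whisker-toggle at a vertex of edge $n$ to kill everything except the single critical cell $\{1,\dots,n\}$ (the paper uses $x_{n-1,n}$ where you use $x_{n,1}$, which is the same move by symmetry). The only cosmetic difference is that you characterize the survivors by the general ``both incident cycle edges already present'' criterion, whereas the paper explicitly lists the five cells remaining before the last toggle.
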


\begin{proof}
Using the same $K_{3,1}$-induced partial matching as in the proof of Proposition~\ref{prop:full_even_cycle} for all $i \in \{1,3, \dots,n-2\}$, the remaining unmatched cells must contain $\{1,2, \dots, n-1\}$. These cells form an upper order ideal in the partially matched face poset of $M_2(WC_n^d)$ and include precisely $\{x_{n,1}, 1,2,...,n-1,x_{n-1,n}\}, \{1,2,...,n\}, \{x_{n,1}, 1,2,...,n-1\}, \{1,2,...,n-1,x_{n-1,n}\}$, and $\{1,2, ..., n-1\}$. Performing a final toggle on the edge $x_{n-1,n}$, we obtain $1$ critical cell, $\{1,2,...,n\}$ and hence $M_2(WC_n^d) \simeq S^{n-1}$. 
\end{proof}


\begin{figure}[!hbt]
\begin{center}
\begin{tikzpicture}[scale = 0.5, every node/.style={}]

\draw[double, line width=2pt] (2,0) --
(2,-2) -- (0,-4) -- (-2,-2) -- (-2,0) -- (0, 2) -- (2,0);
\fill[] (2,0) circle (6pt);
\fill[] (2,-2) circle (6pt);
\fill[] (0,-4) circle (6pt);
\fill[] (-2,-2) circle (6pt);
\fill[] (-2,0) circle (6pt);
\fill[] (0,2) circle (6pt);

\fill[] (-4,-4) circle (6pt);
\fill[] (4,-4) circle (6pt);
\fill[] (0,4.5) circle (6pt);
\fill[] (0,-6.5) circle (6pt);
\fill[] (-4,3) circle (6pt);
\fill[] (4,3) circle (6pt);

\draw[line width=1pt] (-2,-2) --
(-4,-4);
\draw[line width=1pt] (2,-2) --
(4,-4);
\draw[line width=1pt] (0,4.5) --
(0,2);
\draw[line width=1pt] (0,-6.5) --
(0,-4);
\draw[line width=1pt] (-4,3) --
(-2,0);
\draw[line width=1pt] (4,3) --
(2,0);

\node at (0.5,0.75) {$1$};
\node at (1.5,-1) {$2$};
\node at (0.75,-2.5) {$3$};
\node at (-0.75,-2.5) {$4$};
\node at (-1.5,-1) {$5$};
\node at (-0.75,0.65) {$6$};

\node at (0.75,3) {$x_{1,6}$};
\node at (3.5,0.75) {$x_{1,2}$};
\node at (2.5,-3.5) {$x_{2,3}$};
\node at (-0.75,-5) {$x_{3,4}$};
\node at (-2.5,-3.5) {$x_{4,5}$};
\node at (-3.5,0.75) {$x_{5,6}$};


\draw[double,line width=2pt] (10, 2) -- (12,0) -- (12,-2) -- (8,-2) -- (8,0);

\draw[line width=1pt] (8,0) -- (10, 2);
\fill[] (12,0) circle (6pt);
\fill[] (12,-2) circle (6pt);
\fill[] (8,-2) circle (6pt);
\fill[] (8,0) circle (6pt);
\fill[] (10,2) circle (6pt);

\fill[] (6,-4) circle (6pt);
\fill[] (14,-4) circle (6pt);
\fill[] (10,4.5) circle (6pt);
\fill[] (6,2) circle (6pt);
\fill[] (14,2) circle (6pt);

\draw[line width=1pt] (8,-2) --
(6,-4);
\draw[line width=1pt] (12,-2) --
(14,-4);
\draw[line width=1pt] (10,4.5) --
(10,2);
\draw[line width=1pt] (6,2) --
(8,0);
\draw[line width=1pt] (14,2) --
(12,0);

\node at (10.5,0.75) {$1$};
\node at (11.5,-1) {$2$};
\node at (10,-1.5) {$3$};
\node at (8.5,-1) {$4$};
\node at (9.25,0.75) {$5$};

\node at (10.75,3) {$x_{1,5}$};
\node at (13.5,0.75) {$x_{1,2}$};
\node at (12.5,-3.5) {$x_{2,3}$};
\node at (7.5,-3.5) {$x_{3,4}$};
\node at (6.5,0.75) {$x_{4,5}$};

\end{tikzpicture}
\caption{On the left a complete matching on $WC_3$ as in Proposition~\ref{prop:full_even_cycle} with edges in the critical cell highlighted with a double line and on the left the partial matching on $WC_5^d$ as in Remark~\ref{rmk:full_odd_cycle} with edges in the critical cell highlighted with a double line.}
\label{fig:discrete_morse_cycles}
\end{center}

\end{figure}
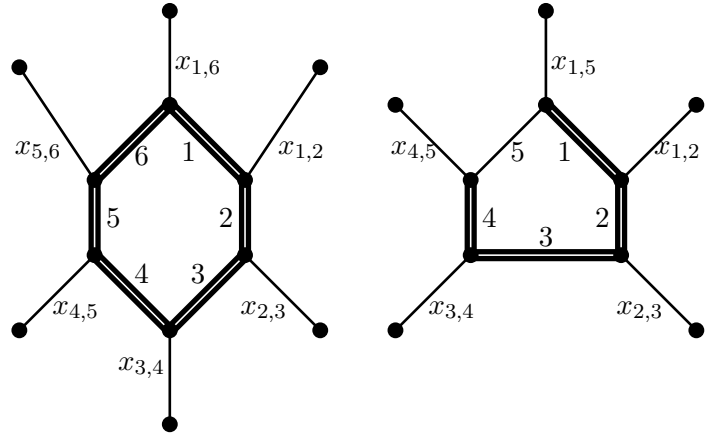




 We saw in Corollary~\ref{cor:sim} that $M_2(CC_n) \simeq M_2(WC_n) \simeq S^{2n-1}$ and it is no coincidence that $CC_n$ is a subgraph of $WC_n$. The next lemma shows that there are certain degree $2$ vertices such that attaching a leaf does not affect the homotopy type of the $2$-matching complex. We call such vertices \emph{attaching sites}.

\begin{lemma} \label{lem:clawed}
Let $CG$ be a clawed graph with vertex set $V(CG)$, edge set $E(CG)$, and $v \in V(CG)$ a degree $2$ vertex with $e_1, e_2 \in E(CG)$ the $2$ incident edges to $v$. Consider a complete claw-induced partial matching on $P$, the face poset of $M_2(CG)$. Then both edges $e_1$ and $e_2$ are in a critical cell if and only if attaching a leaf to $v$ does not change the homotopy type. Further, if at least $1$ edges from the set $\{e_1, e_2\}$  is not in any critical cell obtained from the complete claw-induced partial matching of $P$, the $2$-matching complex of $CG$ with a leaf attached to $v$ is contractible. 
\end{lemma}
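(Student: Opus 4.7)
Let $f = \{v, \ell\}$ denote the new leaf edge in $CG^+$, so $v$ now has degree $3$ with incident edges $e_1, e_2, f$, and write $c_u, c_w$ for the induced claw units of $CG$ containing $e_1$ and $e_2$ respectively, with centers $u, w \in V(G)$. The plan is to build a discrete Morse matching on $\mathcal{F}(M_2(CG^+))$ by first toggling on $f$ and then extending the given complete claw-induced matching $M$ to the resulting unmatched subposet. A $2$-matching $a \not\ni f$ admits $f$ as an extension in $CG^+$ iff $v$ has degree at most $1$ in $a$, equivalently $\{e_1, e_2\} \not\subseteq a$. Hence after toggling on $f$ the unmatched subposet is exactly the upper order ideal $\mathcal{U} := P_{\geq \{e_1, e_2\}} \subseteq \mathcal{F}(M_2(CG))$ consisting of $2$-matchings of $CG$ that contain both $e_1$ and $e_2$; in particular $\emptyset$ is paired with $\{f\}$.

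For the forward direction suppose both $e_1, e_2 \in \sigma^*$, i.e.\ $x_u \neq e_1$ and $x_w \neq e_2$, so every toggle edge $x_\alpha$ of $M$ differs from both $e_1$ and $e_2$. Then adding or removing $x_\alpha$ preserves containment of $\{e_1, e_2\}$, so each toggle from $M$ restricts cleanly to $\mathcal{U}$. Running the inductive argument of Lemma~\ref{lem:claw} inside $\mathcal{U}$ shows that the final unmatched subposet is $\mathcal{U} \cap P_{\geq \sigma^*}$. Since $\{e_1, e_2\} \subseteq \sigma^*$ and $\sigma^*$ is maximal in $\mathcal{F}(M_2(CG))$ (no edge may be added without violating the $2$-matching condition at some claw center), this intersection collapses to $\{\sigma^*\}$, leaving one critical cell of dimension $|\sigma^*| - 1 = 2|V(G)| - 1$. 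Theorem~\ref{thm:main_mta} then yields $M_2(CG^+) \simeq S^{2|V(G)|-1} \simeq M_2(CG)$, so the homotopy type is unchanged.

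For the converse, suppose without loss of generality that $x_u = e_1$, so $e_1 \notin \sigma^*$. Now the toggle on $x_u = e_1$ cannot restrict to $\mathcal{U}$ because every element of $\mathcal{U}$ already contains $e_1$ and no lower element is available. I would instead toggle on a different edge of $c_u$, say $y_u$: an element $a \in \mathcal{U}$ with $y_u, z_u \notin a$ then pairs with $a \cup \{y_u\}$. An analogous substitution is used at $c_w$ when $x_w = e_2$ as well, and the remaining claw toggles are applied as in $M$. The central claim is that, together with the pairing of $\emptyset$ with $\{f\}$ from the first step, this substituted sequence of toggles pairs every cell of $\mathcal{F}(M_2(CG^+))$ so that no critical cell remains, whence Theorem~\ref{thm:main_mta} gives $M_2(CG^+)$ contractible.

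The main obstacle is the converse direction. Replacing the toggle on $x_u = e_1$ by one on $y_u$ leaves behind the elements $a \in \mathcal{U}$ with $z_u \in a$ and $y_u \notin a$; these must be paired through the toggles on edges of the other claws. Verifying that these secondary pairings --- together with the symmetric substitution at $c_w$ in the case $x_w = e_2$ --- eliminate every remaining cell, rather than simply shifting a critical cell to another location, is the key delicate step in establishing contractibility.
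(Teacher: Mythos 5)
Your forward direction is sound and is only a mild reordering of the paper's argument: after toggling on the new leaf edge $f=\{v,\ell\}$ the unmatched cells are exactly those containing both $e_1$ and $e_2$, and when neither $e_1$ nor $e_2$ is a toggle edge the claw toggles restrict to this upper order ideal and leave the single critical cell $\sigma^*$, so the homotopy type is preserved. (The paper instead keeps the claw-induced matching unchanged on $M_2(CG^+)$ and observes that no cell containing $f$ can also contain $e_1,e_2$, so no new critical cells appear; the two arguments are essentially equivalent.)

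The converse direction, however, has a genuine gap, and you have identified it yourself: after toggling on $f$ first, every surviving cell already contains $e_1=x_u$, so the prescribed toggle at $c_u$ is unusable, and substituting $y_u$ leaves behind the cells of $\mathcal{U}$ containing $z_u$ (and, worse, cells where $y_u$ cannot be added because its \emph{other} endpoint already has degree $2$). You do not show these residual cells get paired, and there is no reason to expect the substituted toggles to produce an empty critical set rather than a displaced critical cell. The difficulty is an artifact of toggling on $f$ \emph{first}. Reverse the order: run the original complete claw-induced matching on $\mathcal{F}(M_2(CG^+))$ unchanged, with $e_1=x_u$ still the toggle edge at $c_u$. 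Its critical cells are the elements of the upper order ideal above $\{y_i,z_i\}_i$, and since no critical cell contains the toggle edge $e_1$, the vertex $v$ has degree at most $1$ in every critical cell $X$; hence $X\cup\{f\}$ is again a $2$-matching and is itself critical. In the fully partitioned clawed graph the critical cells are precisely $\sigma^*$ and $\sigma^*\cup\{f\}$, so one final toggle on $f$ pairs them (acyclicity follows from Theorem~\ref{thm:patchwork}), no critical cells remain, and Theorem~\ref{thm:main_mta} gives contractibility. This is exactly the paper's argument, and it closes the step you flagged as delicate.
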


\begin{proof}
Since $CG$ is a clawed graph and deg$(v) = 2$, $v$ is the intersection of $2$ claws $c_1$ and $c_2$. For each claw, $1$ of the edges is a toggle edge and $2$ are in critical cells. 
If $e_1$ and $e_2$ are in some critical cell; then they are in all critical cells since this would mean that $1$ of the other edges in $c_1$ and $c_2$ are toggled on. 
In this case, attaching a leaf $w$ to $v$ does not give rise to any additional critical cells since this would imply that $e_1, e_2$, and the edge $\{v,w\}$ are all in a $2$-matching together, but this is not possible because they are all incident a common vertex. 

Suppose now that no critical cell contains both $e_1$ and $e_2$ (but perhaps contains $1$). Then attaching a leaf $w$ to $v$ gives rise to several new critical cells, under the same matching $\mathcal{M}$. For each critical cell $X$ in the claw-induced partial matching on $P$, $X \cup \{w,v\}$ is a critical cell in the claw-induced partial matching on $\mathcal{F}(M_2(CG \cup \{w,v\}))$. Therefore, every critical cell can be further matched by toggling on $\{w,v\}$ and $M_2(CG \cup \{w,v\})$ is contractible. 
\end{proof}

\begin{theorem} \label{thm:homotopyType}
For a clawed graph $CG$, $M_2(CG) \simeq S^{\frac{2}{3}n -1}$ where $n = |E(CG)|$. 
\end{theorem}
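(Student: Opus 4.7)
The plan is to decompose $CG$ canonically into edge-disjoint induced claw units, apply Lemma~\ref{lem:claw}, and then count edges. By the construction of a clawed graph, every core vertex $v \in V(G)$ has degree exactly $3$ in $CG$, and each of its three neighbors in $CG$ is either a subdivision vertex (of degree $2$) or a newly attached leaf (of degree $1$). Hence the star of $v$ in $CG$ is an induced $K_{1,3}$ whose three legs have degree at most $2$, so it is an induced claw unit $c_v$ in the sense of the paper. Let $\mathcal{C} = \{c_v : v \in V(G)\}$.

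Next I would verify that $\mathcal{C}$ partitions $E(CG)$ into $|V(G)|$ edge-disjoint triples. Every edge of $CG$ is either one of the two halves of a subdivided core edge or a newly attached leaf edge; in either case, exactly one endpoint of the edge is a core vertex of $G$, and that endpoint uniquely determines the claw of $\mathcal{C}$ containing the edge. Consequently $CG \smallsetminus \mathcal{C}$ has no edges (the remaining vertices are isolated subdivision and leaf vertices), so $M_2(CG \smallsetminus \mathcal{C})$ consists only of the empty face, and its join with any space is the identity. Applying Lemma~\ref{lem:claw} then yields
\[
M_2(CG) \simeq S^{2|\mathcal{C}| - 1} \ast M_2(CG \smallsetminus \mathcal{C}) \simeq S^{2|\mathcal{C}| - 1}.
\]
Since each claw contains exactly $3$ edges and the claws are edge-disjoint, $n = |E(CG)| = 3|\mathcal{C}|$, so $|\mathcal{C}| = n/3$ and $M_2(CG) \simeq S^{\frac{2}{3}n - 1}$, as claimed.

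I do not foresee a serious obstacle. The only subtlety worth checking is that two neighboring claws $c_u$ and $c_v$, coming from adjacent core vertices $u,v \in V(G)$, share the subdivision vertex sitting on the former core edge $uv$; however, that shared vertex has degree $2$ in $CG$, so including one edge from each of the two claws at it never violates the $2$-matching condition. This is precisely what the hypothesis of Lemma~\ref{lem:claw} (the legs of each induced claw unit have degree at most $2$) is designed to accommodate, so the join decomposition applies without modification and the argument reduces to the two bookkeeping observations above.
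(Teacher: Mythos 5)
Your proposal is correct and follows essentially the same route as the paper: decompose $CG$ into the $n/3$ induced claw units centered at the core vertices, apply Lemma~\ref{lem:claw}, and observe that $CG \smallsetminus \mathcal{C}$ is edgeless so the join contributes nothing. The only difference is that you spell out the bookkeeping (every edge of $CG$ has exactly one core endpoint, hence the claws partition $E(CG)$) that the paper leaves implicit.
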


\begin{proof}
The clawed graph $CG$ consists of  $\mathcal{C}$ a collection of claws that have pairwise intersection of at most $1$ vertex, that is a collection of $\frac{1}{3} n$ induced claw units, which fully partitions $G$. By Lemma~\ref{lem:claw}, $M_2(CG)\simeq S^{\frac{2}{3}n -1} *M_2(CG \smallsetminus \mathcal{C})$. Since $CG$ is fully partitioned by $\mathcal{C}$, the result follows. 
\end{proof}

We can relate these findings back to~\cite[Theorem 12.5]{Jakob} which gives a general connectivity bound for these complexes. For a real number $\nu$, a family of sets $\Delta$ is $AM(\nu)$ if $\Delta$ admits an acyclic matching such that all unmatched sets are of dimension $\lceil \nu \rceil$. For $\lambda = (\lambda_1,...,\lambda_n)$ define $|\lambda| = \sum_{i=1}^{n} \lambda_i$. For a sequence $\mu = (\mu_1,...,\mu_n), n \geq 1,$ define 
\[
\alpha(n, \mu) = \text{min}\{\alpha: BD_n^\lambda \text{ is } AM(\frac{|\lambda| -\alpha}{2} -1)\}.
\]

\begin{theorem} (Thm 12.5, \cite{Jakob}) \label{thm:Jonsson}
Let $G$ be a graph on the vertex set $V$ and $n = |V|$. Let $\{U_1,..,U_t\}$ be a clique partition of $G$ and let $\lambda = (\lambda_1,...,\lambda_n)$ and $\mu = (\mu_1,...,\mu_n)$ be sequences of nonnegative integers such that $\lambda_i \leq \mu_i$ for all $i$. Then $BD_n^\lambda(G)$ is $(\lceil \nu \rceil - 1)$ connected, where 
\[
\nu = \frac{|\lambda|}{2} -\frac{1}{2} \sum\limits_{j=1}^t(\alpha(|U_j|, \mu_{U_j}) -1
\]
\end{theorem}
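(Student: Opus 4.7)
The plan is to prove the theorem by combining discrete Morse theory (specifically, the connectivity consequences of having an acyclic matching with unmatched cells only in high dimensions) with the Patchwork Theorem (Theorem~\ref{thm:patchwork}) applied to a filtration of $\mathcal{F}(BD_n^\lambda(G))$ induced by the clique partition $\{U_1,\ldots,U_t\}$. The key engine is the fact that a family of sets which is $AM(\nu)$ admits an acyclic matching whose critical cells all have dimension $\lceil \nu \rceil$; by Theorem~\ref{thm:main_mta}, this forces $BD_n^\lambda(G)$ to be homotopy equivalent to a CW complex with no cells in dimensions below $\lceil \nu \rceil$, and hence to be $(\lceil \nu \rceil - 1)$-connected.

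First I would recall (or restate) from Jonsson's development that for each clique $U_j$, the restricted complex $BD_{|U_j|}^{\mu_{U_j}}(K_{|U_j|})$ is $AM\!\left(\frac{|\mu_{U_j}| - \alpha(|U_j|,\mu_{U_j})}{2} - 1\right)$ by definition of $\alpha$. Since a subset of edges of $G$ is in $BD_n^\lambda(G)$ precisely when, for every clique $U_j$, its restriction to edges inside $U_j$ respects the degree bounds inherited from $\lambda_{U_j} \leq \mu_{U_j}$, we can write a face $\sigma \in BD_n^\lambda(G)$ uniquely as $\sigma = \bigsqcup_{j=1}^t \sigma_j$ where $\sigma_j$ is the part of $\sigma$ using edges inside $U_j$. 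This gives a natural product-like decomposition of the face poset that is compatible with containment.

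Next I would set up the order-preserving map $\varphi$ required by the Patchwork Theorem by sending each face $\sigma$ to the tuple $(|\sigma_1|,\ldots,|\sigma_t|)$ (ordered componentwise), or more flexibly to the appropriate partial record of which edges are forced vs.\ free in each clique block. On each fiber $\varphi^{-1}(q)$, I would install the acyclic matching coming from the clique-local $AM(\cdot)$ statement above. Theorem~\ref{thm:patchwork} then glues these into a single acyclic matching on $\mathcal{F}(BD_n^\lambda(G))$. The dimension of an unmatched face is the sum of the dimensions of its unmatched clique-pieces, so the critical cells are concentrated in dimension at least
\[
\sum_{j=1}^t \left( \left\lceil \frac{|\lambda_{U_j}| - (\alpha(|U_j|,\mu_{U_j})-1)}{2} \right\rceil - 1 \right),
\]
which after summation and the constraint $\sum_j |\lambda_{U_j}| = |\lambda|$ is bounded below by $\lceil \nu \rceil$ for $\nu$ as in the statement.

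The hard part will be two-fold. First, defining the local matchings on each clique fiber so that they actually assemble coherently: the $\lambda$-bound on $BD_n^\lambda(G)$ imposes global constraints on degrees that the $\mu$-bound used in the definition of $\alpha$ does not, so one must show the local acyclic matching restricts well to the $\lambda$-admissible sub-family without breaking acyclicity or raising the critical dimension. Second, the arithmetic that extracts the stated $\nu$ from the sum of the local contributions — in particular, the factor $\frac{1}{2}(\alpha(|U_j|,\mu_{U_j}) - 1)$ with the $-1$ adjustment — requires a careful accounting for how ceilings interact with summation and for the role of the edges of $G$ that lie between distinct cliques (which sit in a trivial clique-block under any refinement of the partition, contributing no $\alpha$-penalty). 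Handling these details cleanly is precisely the content of Jonsson's proof in \cite{Jakob}, and the cluster/patchwork framework developed earlier in Section~\ref{sec:background} is exactly the right scaffolding to carry it out.
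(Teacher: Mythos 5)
The first thing to note is that the paper offers no proof of this statement: it is quoted verbatim from Jonsson (\cite{Jakob}, Theorem 12.5) and used as a black box to compare against Theorem~\ref{thm:homotopyType}. So there is no in-paper argument to measure your proposal against; it has to stand on its own as a reconstruction of Jonsson's proof, and as such it is an outline with a genuine gap rather than a proof.

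Your high-level strategy is the right one -- install the $AM$-matchings guaranteed on each clique block by the definition of $\alpha(|U_j|,\mu_{U_j})$, glue them with the Cluster Lemma / Patchwork Theorem (Theorem~\ref{thm:patchwork}), and read off connectivity from Theorem~\ref{thm:main_mta} once all critical cells sit in dimension at least $\lceil\nu\rceil$. The gap is in the step you treat as bookkeeping: the claimed unique decomposition $\sigma=\bigsqcup_j\sigma_j$ of a face into its clique-pieces simply does not exist when $\sigma$ contains an edge joining two distinct blocks $U_i$ and $U_j$ of the partition. Such edges belong to no block, so your order-preserving map $\varphi$ is undefined on exactly the faces that make the theorem nontrivial, and the parenthetical fix (putting cross edges in ``a trivial clique-block under any refinement'') is not available, since the $U_j$ partition the vertex set and a cross edge cannot be adjoined as a new block. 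Handling those edges -- in Jonsson's treatment, essentially an induction on the number of edges outside the clique blocks, with a separate matching argument controlling how they interact with the degree bounds $\lambda$ -- is the actual content of the proof, and your proposal explicitly defers it (along with the ceiling arithmetic that produces the stated $\nu$) to the cited source. Until that step is carried out, the argument establishes the bound only for graphs whose edge set is entirely covered by the cliques, where the complex is a genuine join of the block complexes.
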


\begin{proposition}
Theorem~\ref{thm:homotopyType} is an example where Theorem \ref{thm:Jonsson} is not sharp.
\end{proposition}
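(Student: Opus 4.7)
The plan is to exhibit a specific clawed graph $CG$ for which the exact connectivity of $M_2(CG)$, as computed via Theorem~\ref{thm:homotopyType}, strictly exceeds the connectivity bound obtained from Theorem~\ref{thm:Jonsson} under every choice of clique partition. The natural first candidate is $CP_0 = K_{1,3}$, the smallest clawed graph, with $|V(K_{1,3})|=4$ and $|E(K_{1,3})|=3$. By Theorem~\ref{thm:homotopyType} (equivalently, Proposition~\ref{prop:clawed_paths}) we have $M_2(K_{1,3}) \simeq S^1$, so its connectivity is exactly $0$, and the task reduces to showing that $\lceil \nu \rceil - 1 < 0$ for every clique partition of $K_{1,3}$.

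To carry this out, I would first observe that $K_{1,3}$ is triangle-free, so any clique partition $\{U_1,\dots,U_t\}$ consists only of singleton cliques ($|U_j|=1$) and edge cliques ($|U_j|=2$); since all three edges share the central vertex, there are only two partition types to consider: four singleton cliques, or one edge clique with two singleton cliques. Taking $\lambda = \mu = (2,2,2,2)$ so $|\lambda|=8$, I would plug the values $\alpha(1,(2))$ and $\alpha(2,(2,2))$ recorded in Jonsson's paper into
\[
\nu = \frac{|\lambda|}{2} - \frac{1}{2}\sum_{j=1}^{t}\bigl(\alpha(|U_j|,\mu_{U_j}) - 1\bigr),
\]
compute $\lceil \nu \rceil - 1$ for each of the two partition types, take the maximum to obtain the best Jonsson bound, and compare it against the exact connectivity $0$ given by $S^1$.

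The main obstacle is extracting and correctly applying Jonsson's values of $\alpha$ for the very small cliques of sizes $1$ and $2$, since those determine whether $K_{1,3}$ is already sufficient to witness non-sharpness. If $K_{1,3}$ turns out to give the same bound as the sphere connectivity, I would scale up to a longer clawed path $CP_n$ (or equivalently an iterated wedge of copies of $K_{1,3}$ across leaves, via Observation~\ref{obs:wedge}). Here Theorem~\ref{thm:homotopyType} gives connectivity $\tfrac{2|E(CP_n)|}{3}-2 = 2n$, which grows linearly in $|E|$ with slope $\tfrac{2}{3}$. A clique partition of a triangle-free graph has at least $|V|/2$ parts, and each part contributes a nonnegative amount $\alpha - 1$ to the subtracted sum in $\nu$; a straightforward counting argument then bounds the slope of Jonsson's bound strictly below $\tfrac{2}{3}$, forcing a strict gap for $n$ sufficiently large and thus establishing non-sharpness.
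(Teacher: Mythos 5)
Your strategy---exhibit one explicit clawed graph, enumerate its clique partitions, and show every resulting Jonsson bound falls strictly below the true connectivity---is sound and would establish the proposition, but it is a different route from the paper's. The paper argues once for \emph{all} clawed graphs: since every edge of the core is subdivided, a clawed graph is triangle-free, so any clique partition uses only $1$- and $2$-cliques; with $\lambda=\mu=(2,\dots,2)$ Jonsson's Lemma~12.6 gives $\alpha=2$ for every part, the count $t=T+(|V|-2T)$ of parts yields $\nu=\frac{|E|}{3}-1$, and this is compared against the sphere $S^{\frac{2}{3}|E|-1}$ of Theorem~\ref{thm:homotopyType}. Your single-witness version with $CP_0=K_{1,3}$ does work (the two partition types give $\nu=-1$ and $\nu=0$, hence at best a guarantee of $(-1)$-connectedness, versus actual connectivity $0$ for $S^1$), and your fallback slope argument for $CP_n$ is consistent with the paper's computation, where the gap between $\frac{|E|}{3}-2$ and $\frac{2|E|}{3}-2$ indeed grows linearly. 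What the paper's version buys is uniformity over the whole family; what yours buys is a concrete, fully checkable minimal example.

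One concrete caution: the formula for $\nu$ as you have transcribed it, with the $-1$ inside the sum as $\sum_j\bigl(\alpha(|U_j|,\mu_{U_j})-1\bigr)$, is not the one the paper's proof actually uses (the displayed theorem has an unbalanced parenthesis). With your reading and $\alpha\equiv 2$, the partition of $K_{1,3}$ into one edge and two singletons gives $\nu=4-\tfrac{3}{2}=2.5$, i.e., a claimed $2$-connectedness guarantee, which would \emph{exceed} the true connectivity of $M_2(K_{1,3})\simeq S^1$ and thus contradict Jonsson's theorem rather than show it is not sharp. The computation the paper performs is $\nu=\frac{|\lambda|}{2}-\frac{1}{2}\sum_j\alpha(|U_j|,\mu_{U_j})-1$, with the $-1$ outside the sum; you must use that form (or verify the statement against Jonsson's original) before plugging in, after which your $K_{1,3}$ witness and the asymptotic argument both go through.
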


\begin{proof}
To show this we need to choose a clique partition. By construction of the clawed
graphs, the best we can do is choosing a partition of $2$- and $1$-cliques. Let $\lambda = (2,2,...,2) = \mu$.  By \cite[Lemma 12.6]{Jakob}, all values of $\alpha$ are $2$ and any $\mu$ with $\lambda_i < \mu_i$ for $i = 1,2$ would give rise to larger $\alpha$ values. So, the lower bound on connectivity is given by $ \nu = \frac{|\lambda|}{2} - \frac{1}{2} \sum\limits_{j=1}^{t} 2 -1$. Let $T$ denote the number of claws in $CG$. Since $|\lambda| = 2 |V(CG)|$ and $t = T + (|V| -2T) = \frac{|E|}{3} + (|V| -2\frac{|E|}{3})$, $\nu$ simplifies to $|V| -(\frac{|E|}{3} + |V| -2\frac{|E|}{3}) -1 = \frac{|E|}{3} -1$. From Theorem \ref{thm:homotopyType}, the actual dimension of the $2$-matching complex is $\frac{|E|}{3}$, greater than the lower bound obtained from Theorem \ref{thm:Jonsson}. 
\end{proof}


\section{ Clawed Non-separable Graphs}\label{sec:nonsep}

Suppose we have a graph with potential attaching sites, i.e., vertices of degree $2$. It is natural to ask, given some matching, which of these degree $2$ vertices are actually attaching sites. In addition, once we start attaching leaves, how many can we attach before the $2$-matching complex becomes contractible? To analyze these questions, we will focus our attention on clawed non-separable graphs. Our overall goal of this section will be to maximize the number of attaching sites in a clawed graph by pairing toggle edges in the graph.

\begin{definition}
A \emph{non-separable, i.e., $2$-connected, graph} is a connected graph in which the removal of any $1$ vertex results in a connected graph.  
\end{definition}

Non-separable graphs can be classified through the following construction \cite[Proposition 3.1.1]{Diestel}: 
\begin{enumerate} 
\item Begin with a graph $G:= n$-cycle 
\item Choose $2$ vertices of $G$, say $v_1$ and $v_2$. 
\item Identify the $2$ endpoints of a path of length at least $1$ to $v_1$ and $v_2$ respectively. 
\item Set $G$ to be this new graph and return to (2). 
\end{enumerate}
Stopping after any iteration yields a non-separable graph $G$. Using this construction we can define a clawed non-separable graph.
\begin{definition} 
A \emph{clawed non-separable graph} is a graph obtained through the following construction. 
\begin{enumerate} 
\item Begin with $G$ a clawed $n$-cycle, that is $G := C(C_n)$.  
\item Choose $2$ leaves of $G$, say $v_1$ and $v_2$. 
\item For each endpoint  $x$ in a path $P$ of length at least $1$, let $1$ of the leaves attached to $x$ be an endpoint of the clawed path, $CP$. Identify the $2$ endpoints of a clawed path to $v_1$ and $v_2$ respectively.
\item Set $G$ to be this new graph and return to (2). 
\end{enumerate}
Stopping after any iteration yields a clawed non-separable graph $G$. 
\end{definition} 

Notice that a clawed non-separable graph is the clawed graph of some non-separable graph. We can use the construction of clawed non-separable graphs to get an understanding of the relationship between the number of claws in a clawed non-separable graph and the number of leaves. This will eventually lead us to finding an upper bound for the number of attaching sites in such a graph. Recall that an \emph{attaching site} is a degree $2$ vertex such that attaching a leaf does not affect the homotopy type of the resulting $2$-matching complex. 

\begin{proposition}
Let $T$ be the number of claws in a clawed non-separable graph and $L$ the number of leaves. Then $T$ and $L$  have the same parity modulo $2$. 
\end{proposition}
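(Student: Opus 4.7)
My plan is to prove $T \equiv L \pmod 2$ via a one-line parity argument from the handshake lemma, once I establish that a clawed non-separable graph has degree sequence concentrated in $\{1,2,3\}$ with the degree-$3$ vertices being exactly the claw centers and the degree-$1$ vertices being exactly the leaves.

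First I would verify this degree structure by induction on the construction of a clawed non-separable graph. The base case, a clawed $n$-cycle $CC_n$, has $n$ core vertices of degree $3$ (each the center of a claw), $n$ subdivision vertices of degree $2$, and $n$ leaves of degree $1$. For the inductive step, suppose the current clawed non-separable graph $G$ has the claimed degree structure, and we attach a clawed path $CP$ by identifying two leaves $v_1,v_2$ of $G$ with two endpoint leaves of $CP$. Since $v_1$ and its partner endpoint leaf of $CP$ each have degree $1$, the identified vertex acquires degree $1+1=2$; the same holds for $v_2$. All other vertices of $G$ and $CP$ keep their degrees, so claw centers remain degree-$3$ claw centers, and the only leaves whose status changes are the four identified ones, which collapse into two new degree-$2$ vertices. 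Hence the enlarged graph again has degrees in $\{1,2,3\}$, the degree-$3$ vertices still correspond bijectively to claws, and the degree-$1$ vertices still correspond bijectively to leaves.

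With the degree structure in hand, let $S$ denote the number of degree-$2$ vertices of the clawed non-separable graph. The handshake lemma gives
\[
3T + 2S + L \;=\; 2|E(G)|,
\]
and reducing modulo $2$ yields $T + L \equiv 0 \pmod 2$, equivalently $T \equiv L \pmod 2$, which is the claimed parity identity.

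The main (and essentially only) obstacle is the inductive bookkeeping through the leaf-identification step: one must check carefully that the identification does not create a vertex of degree exceeding $3$, does not change which vertices are claw centers, and turns exactly four leaves into two degree-$2$ vertices. Once that verification is in place, the parity statement falls out immediately from the handshake lemma.
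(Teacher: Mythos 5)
Your proof is correct, but it takes a different route from the paper's. The paper argues directly by induction on the construction of the clawed non-separable graph: for the clawed $n$-cycle one has $T=L=n$, and each attachment of a clawed path removes leaves two at a time (via the two identifications) while the clawed path itself contributes claws and leaves in a parity-preserving way, so $T-L$ stays even throughout. You instead establish the degree structure (degrees in $\{1,2,3\}$, with degree-$3$ vertices exactly the claw centers and degree-$1$ vertices exactly the leaves) and then read off the parity identity $3T+2S+L=2|E|$ from the handshake lemma. Your inductive verification of the degree structure is sound, though it could be shortened: the paper already notes that a clawed non-separable graph is the clawed graph of some non-separable graph, and the definition of clawing immediately forces every core vertex to have degree $3$ and every added vertex to have degree $1$ or $2$. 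What your approach buys is generality and robustness: the handshake argument shows that $T\equiv L \pmod 2$ for \emph{any} clawed graph (indeed any graph with maximum degree $3$ whose degree-$3$ vertices are claw centers), independent of the particular non-separable construction, whereas the paper's increment-tracking argument is tied to the specific recursive build and is stated rather tersely. Both are valid; yours isolates the structural fact that actually drives the parity.
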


\begin{proof}
It is clear that for the clawed graph of a non-separable $n$-cycle the parity of $T$ and $L$ is the same. Then, by construction $2$ leaves are chosen, changing the number of leaves but keeping the parity the same. For each additional claw we add another leaf and the parity remains the same. 
\end{proof}

A consequence of this proposition is that there is an even number of possible toggle edges that are not in induced claw units that contain a leaf. Our strategy for obtaining an upper bound for the maximum number of attaching sites will be to \emph{pair} the toggle edges where we say $2$ toggle edges are \emph{paired} if the toggle edges are incident to one another. 

\begin{theorem}\label{thm:upperbound}
Let $CH$ be the clawed graph of a non-separable graph $H$ such that $CH$ has $T$ claws and consider a complete claw-induced partial matching on $CH$.Then, the upper bound for the maximum number of leaves that can be added before changing the homotopy type of $M_2(CH)$ the $2$-matching complex of a clawed non-separable graph is $T$.
\end{theorem}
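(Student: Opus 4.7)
The plan is to translate the counting of attaching sites into a double-counting problem involving the number of claws $T$, the number of leaves $L$, and the number of shared (degree $2$) vertices of $CH$, and then invoke the pairing strategy from the paragraph preceding the theorem to minimize the number of shared vertices ``spoiled'' by toggle edges.

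First I would reinterpret an attaching site in terms of toggle edges. By Lemma~\ref{lem:clawed}, a degree $2$ vertex $v$ of $CH$ is an attaching site iff both edges incident to $v$ lie in every critical cell of the claw-induced partial matching, which by Lemma~\ref{lem:claw} means neither incident edge is the toggle edge of its claw. Because $CH$ is clawed, every degree $2$ vertex sits at the intersection of exactly two claws; call such a vertex a \emph{shared vertex} and write $S$ for their total number. Double-counting the $3T$ non-center vertex slots contributed by the $T$ claws, noting that each leaf accounts for one slot and each shared vertex for two, yields $3T = L + 2S$, whence $S = (3T-L)/2$.

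Next I would track the toggle edges. Each claw contributes exactly one toggle edge, whose non-center endpoint is either a leaf or a shared vertex. Let $\ell$ denote the number of toggle edges ending at a leaf; since each leaf belongs to a unique claw, $\ell \leq L$. The remaining $T-\ell$ toggle edges end at shared vertices, and each shared vertex lies in at most two claws, so it can receive at most two toggle edges; when it receives exactly two, the corresponding toggle edges are ``paired'' in the sense of the paper's discussion. Letting $p$ be the number of shared vertices receiving two toggle edges and $q$ the number receiving exactly one, we have $q + 2p = T - \ell$, and the number of shared vertices spoiled by toggle edges is $p + q$. The number of attaching sites therefore equals $S - (p+q)$. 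Minimizing $p + q = (T-\ell) - p$ subject to $q = T - \ell - 2p \geq 0$ gives $p \leq (T-\ell)/2$, so $p + q \geq (T-\ell)/2 \geq (T-L)/2$ using $\ell \leq L$. Substituting yields
\[
\#\{\text{attaching sites}\} \leq S - \tfrac{T-L}{2} = \tfrac{3T-L}{2} - \tfrac{T-L}{2} = T.
\]

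The main obstacle is justifying the two implicit ingredients: that $L \leq T$ for every clawed non-separable graph (so $(T-L)/2 \geq 0$), and that attaching leaves at several distinct attaching sites simultaneously preserves the homotopy type, so that the count $T$ really bounds the number of leaves that can be added. The first point I would prove by induction on the construction of clawed non-separable graphs: the base case $CH = CC_n$ satisfies $L = T = n$, and each iteration that grafts a clawed path of length $k \geq 1$ onto two existing leaves adds $k+1$ claws but only $k-1$ new leaves (two leaves on each side are consumed in the identification), so $T - L$ is non-decreasing along the construction. The second point I would obtain by iterating Lemma~\ref{lem:clawed}: attaching a leaf at one attaching site introduces no new critical cell in the extended claw-induced matching, so every other attaching site remains an attaching site in the enlarged graph, and the claim follows inductively.
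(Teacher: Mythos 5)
Your argument is correct and follows essentially the same route as the paper's proof: count the $(3T-L)/2$ shared degree-$2$ vertices, observe that the non-leaf toggle edges spoil at least $(T-L)/2$ of them (with equality when toggle edges are paired), and subtract. You additionally make explicit two points the paper leaves implicit — the verification that $T-L$ is non-decreasing along the construction and that attaching leaves at several sites simultaneously preserves the homotopy type — which is a welcome tightening but not a different method.
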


\begin{proof}
The total number of possible attaching sites is given by $\frac{3T - L}{2}$ because each claw has three vertices with degree less than three, we need to remove the number of leaves, since the degree is $1$, and then divide by $2$ since all remaining vertices are the intersection of $2$ claws. Now to find the maximum number of attaching sites we subtract away the minimum number of vertices that have at least $1$ edge that is toggled on in the complete claw-induced partial matching. 

There is $1$ toggled edge per claw and for any claw that has a leaf we can choose the edge whose endpoint is a leaf as the toggle edge, which will maximize the number of attaching sites since no additional leaf can be added to either endpoint of a leaf edge. The most ideal partial matching pairs the toggled edges, so minimally we have $\frac{T-L}{2}$ vertices that cannot be sites. 

Hence, we have a maximum of $\frac{3T -L}{2} - \frac{T - L}{2} = \frac{2T}{2} = T$ attaching sites. 
\end{proof}

The strategy in the proof of Theorem~\ref{thm:upperbound} was to pair toggle edges as a way to maximize the number of attaching sites. We provide $2$ examples (Figures~\ref{fig:good_example} and ~\ref{fig:bad_example}) in which the toggle edges are depicted with a solid line and the edges in the critical cell are depicted as double lines. In Figure~\ref{fig:good_example}, we have an example of a clawed non-separable graph together with a partial matching which attains the maximum number of attaching sites, namely $5$. 

\begin{figure} [h!]
\begin{tikzpicture}[scale = 0.25, every node/.style={}]
\draw[double, line width=2pt] (0,7) --
(5,7)-- (4,2) -- (1,2) -- (-1,-1) -- (-5, -1) -- (-7,2) -- (-5,5) -- (-7,7) -- (-4,9) -- (0,7);
\draw[line width=1pt] (-4,9) --
(-4,12);
\draw[line width=1pt] (5,7) --
(7,9);
\draw[line width=1pt] (-5,5) --
(-1,5) -- (1,2);
\draw[line width=1pt] (-5,-1) --
(-7,-3);

\fill[] (-7,-3) circle (9pt);
\fill[] (-7,2) circle (9pt);
\fill[] (-7,7) circle (9pt);
\fill[] (-4,12) circle (9pt);
\fill[] (-4,9) circle (9pt);
\fill[] (-5,5) circle (9pt);

\fill[] (-5,-1) circle (9pt);
\fill[] (-1,-1) circle (9pt);
\fill[] (1,2) circle (9pt);
\fill[] (-1,5) circle (9pt);
\fill[] (0,7) circle (9pt);
\fill[] (7,9) circle (9pt);
\fill[] (5,7) circle (9pt);
\fill[] (4,2) circle (9pt);

\end{tikzpicture}
\caption{}
\label{fig:good_example}
\end{figure}


It is not always the case that we can achieve the upper bound for the number of attaching sites for clawed non-separable graphs. In Figure~\ref{fig:bad_example}, we see that after toggling on the leaf edges and doing our best to pair the inner toggle edges we are still left with $2$ independent induced claw units that are surrounded by edges that are already in the critical cell. By Lemma~\ref{lem:clawed} we see no matter which edge we choose in either of these induced claw units as the toggle edge, we will decrease the total number of possible attaching sites and thereby the number of possible attaching sites is less than the maximum.


\begin{figure}[h!]
\begin{tikzpicture}[scale = 0.25, every node/.style={}]

\draw[double, line width=2pt] (14,9.5) --
(11,10)-- (10,7) -- (13,6) -- (13,4) -- (14.5,2) -- (16.5,2);
\draw[double, line width=2pt] (18,9) --
(16.5,8) -- (18,6);
\draw[double, line width=2pt] (16,13) --
(22,6) -- (20,2);
\draw[line width=1pt] (13,6) --
(14.5,8) -- (16.5,8);
\draw[line width=1pt] (14.5,2) --
(13,0);
\draw[line width=1pt] (11,10) --
(10,12);
\draw[line width=1pt] (22,6) --
(24,7.5);
\draw[dashed, line width=1pt] (16,13) --
(16,11) -- (18,9);
\draw[dashed, line width=1pt] (16,11) --
(14,9.5);
\draw[dashed, line width=1pt] (18,6) --
(18,4) -- (20,2);
\draw[dashed, line width=1pt] (18,4) --
(16.5,2);

\fill[] (10,7) circle (9pt);
\fill[] (10,12) circle (9pt);
\fill[] (11,10) circle (9pt);
\fill[] (13,6) circle (9pt);
\fill[] (13,4) circle (9pt);
\fill[] (13,0) circle (9pt);

\fill[] (14.5,2) circle (9pt);
\fill[] (14.5,8) circle (9pt);
\fill[] (14,9.5) circle (9pt);
\fill[] (16,13) circle (9pt);
\fill[] (16,11) circle (9pt);
\fill[] (16.5,8) circle (9pt);
\fill[] (18,6) circle (9pt);
\fill[] (18,4) circle (9pt);
\fill[] (16.5,2) circle (9pt);
\fill[] (20,2) circle (9pt);
\fill[] (22,6) circle (9pt);
\fill[] (24,7.5) circle (9pt);
\fill[] (18,9) circle (9pt);

\end{tikzpicture}
\caption{}
\label{fig:bad_example}
\end{figure}

We end this section with a constructible algorithm to obtain a maximal number of attaching sites in a clawed non-separable graph.

This constructible algorithm to obtain a maximal number of attaching sites prioritizes using leaf edges as toggle edges followed by pairing non-leaf toggle edges. Using Lemma~\ref{lem:claw}, we may arbitrarily choose $1$ of the $3$ edges in each of our claws without changing the homotopy type generated by the claw-induced partial matching. At each step we are bringing together as many of the toggle edges as possible to attain the maximal number of attaching sites. Figure~\ref{fig:algorithm_example} provides an example.
\begin{enumerate}
\item Begin with a clawed $n$-cycle and a claw decomposition $C = \{c_1,...,c_n\}$. Choose all the leaf edges as toggle edges so that all edges in the cycle are in the critical cell.
\item Choose $2$ claws, $c_i$ and $c_j$ to attach the next clawed path. Notice that $c_i$ and $c_j$ are induced claw units that contain a leaf, which we call \emph{leaf-claws}. Modify the $2$-matching on these $2$ leaf-claws so that: 
	\begin{enumerate}
	\item For each of the chosen leaf claws $c_i$ and $c_j$: If the leaf claw is incident to a previously chosen or currently chosen leaf claw change the partial matching to pair the toggle edges of these $2$ leaf-claws, prioritizing the leaf claws incident to only $1$ previously chosen leaf-claw. In doing so the number of attaching sites will either remain the same or increase.
	\end{enumerate}
\item For the new clawed path, let all of the leaf edges be the toggle edges. 
\item Return to (2). 
\end{enumerate}
This algorithm returns the maximum number of attaching sites. Consider taking a claw-induced matching on a clawed non-separable graph. If it was possible to increase the number of attaching sites of by modifying this matching, $1$ of $2$ scenarios may be present: 
\begin{itemize}
\item[(i)] there exists a leaf-claw such that the toggle edge is not the leaf edge, or 
\item[(ii)] there exists a pair of incident claws such that neither $1$ has a toggle edge that is already incident to another toggle edge. 
\end{itemize}
Through this algorithm, all leaves are toggle edges so (i) is not present. Notice that if (ii) appeared in this construction it would arise from step (2) of the algorithm when we add a new clawed path, but during that step we are re-orienting so that whenever possible toggle edges are incident to each other.


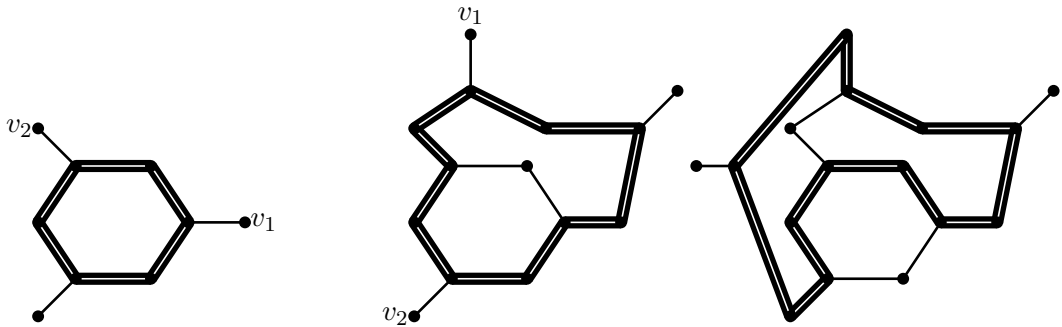
\begin{figure}[hbt!]
\begin{center}
\begin{tikzpicture}[scale = 0.25, every node/.style={}]


\draw[line width = 1pt] (-7,-3) -- (-5,-1); 
\draw[line width = 1pt](-5,5) -- (-7,7); 
\draw[double, line width = 2pt](-5,5) -- (-7,2); 
\draw[double, line width = 2pt](-7,2) -- (-5,-1); 
\draw[double, line width = 2pt](-5,-1) -- (-1, -1); 
\draw[double, line width = 2pt] (-1,-1) -- (1,2); 
\draw[line width = 1pt] (1,2) -- (4,2); 
\draw[double, line width = 2pt] (1,2) -- (-1,5);
\draw[double, line width = 2pt] (-1,5) -- (-5,5);

\fill[] (-7,-3) circle (9pt);
\fill[] (-7,2) circle (9pt);
\fill[] (-7,7) circle (9pt);
\fill[] (-5,5) circle (9pt);

\fill[] (-5,-1) circle (9pt);
\fill[] (-1,-1) circle (9pt);
\fill[] (1,2) circle (9pt);
\fill[] (-1,5) circle (9pt);
\fill[] (4,2) circle (9pt);

\node at (5,2) {$v_1$};
\node at (-8,7) {$v_2$};


\draw[ line width = 1pt] (13,-3) -- (15,-1); 
\draw[ double, line width = 2pt](15,5) -- (13,7); 
\draw[double, line width = 2pt](15,5) -- (13,2); 
\draw[double, line width = 2pt](13,2) -- (15,-1); 
\draw[double, line width = 2pt](15,-1) -- (19, -1); 
\draw[double, line width = 2pt] (19,-1) -- (21,2); 
\draw[double, line width = 2pt] (21,2) -- (24,2); 
\draw[line width = 1pt] (21,2) -- (19,5);
\draw[line width = 1pt] (19,5) -- (15,5); 
\draw[double, line width = 2pt] (13,7) -- (16,9); 
\draw [double, line width = 2pt] (16,9) -- (20,7); 
\draw [double, line width = 2pt] (20,7) -- (25,7); 
\draw [double, line width = 2pt] (25,7) -- (24,2); 
\draw[line width = 1pt] (16,12) -- (16,9); 
\draw[line width = 1pt] (25,7) -- (27,9);

\fill[] (13,-3) circle (9pt);
\fill[] (13,2) circle (9pt);
\fill[] (13,7) circle (9pt);
\fill[] (15,5) circle (9pt);

\fill[] (15,-1) circle (9pt);
\fill[] (19,-1) circle (9pt);
\fill[] (21,2) circle (9pt);
\fill[] (19,5) circle (9pt);
\fill[] (24,2) circle (9pt);

\fill[] (16,12) circle (9pt);
\fill[] (16,9) circle (9pt);
\fill[] (20,7) circle (9pt);
\fill[] (27,9) circle (9pt);
\fill[] (25,7) circle (9pt);

\node at (16,13) {$v_1$};
\node at (12,-3) {$v_2$};


\draw[double, line width = 2pt] (33,-3) -- (35,-1); 
\draw[line width = 1pt](35,5) -- (33,7); 
\draw[double, line width = 2pt](35,5) -- (33,2); 
\draw[double, line width = 2pt](33,2) -- (35,-1); 
\draw[line width = 1pt](35,-1) -- (39, -1); 
\draw[ line width = 1pt] (39,-1) -- (41,2); 
\draw[double, line width = 2pt] (41,2) -- (44,2); 
\draw[double, line width = 2pt] (41,2) -- (39,5);
\draw[double, line width = 2pt] (39,5) -- (35,5); 
\draw[ line width = 1pt] (33,7) -- (36,9); 
\draw [double, line width = 2pt] (36,9) -- (40,7); 
\draw [double, line width = 2pt] (40,7) -- (45,7); 
\draw [double, line width = 2pt] (45,7) -- (44,2); 
\draw[double, line width = 2pt] (36,12) -- (36,9); 
\draw[line width = 1pt] (45,7) -- (47,9); 
\draw[double, line width = 2pt] (33,-3) -- (30,5); 
\draw[double, line width = 2pt] (36,12) -- (30,5);
\draw[line width = 1pt] (30,5) -- (28,5);

\fill[] (33,-3) circle (9pt);
\fill[] (33,2) circle (9pt);
\fill[] (33,7) circle (9pt);
\fill[] (35,5) circle (9pt);

\fill[] (35,-1) circle (9pt);
\fill[] (39,-1) circle (9pt);
\fill[] (41,2) circle (9pt);
\fill[] (39,5) circle (9pt);
\fill[] (44,2) circle (9pt);

\fill[] (36,12) circle (9pt);
\fill[] (36,9) circle (9pt);
\fill[] (40,7) circle (9pt);
\fill[] (47,9) circle (9pt);
\fill[] (45,7) circle (9pt);
\fill[] (30,5) circle (9pt); 
\fill[] (28,5) circle (9pt);

\end{tikzpicture}
\caption{On the left most picture we start with a clawed cycle. Choosing $2$ points, $v_1$ and $v_2$ we attach a clawed path of length $2$. Since the $2$ chosen leaf claws are incident, we pair the toggle edges of each. Then we choose $2$ more vertices, $v_1$ and $v_2$ and continue. In this step there is $1$ claw unit that is incident to $2$ leaf claws and the other claw unit is incident to $1$.}
\label{fig:algorithm_example}
\end{center}
\end{figure}



\section{$k$-matching sequences}\label{sec:wheel}
We now turn our attention to the relationship between $1$-matchings and $2$-matchings. 
Let $G$ be a graph. Define a \emph{$k$-matching sequence} as the sequence $(M_1(G), M_2(G), M_3(G), 
\dots, M_n(G))$, up to homotopy, for $1 \leq k \leq n$ and where $M_n(G)$ is a contractible space. The $n$-matching complex $M_n(G)$ is a cone, hence contractible, precisely when there is an edge $e \in E(G)$ with both endpoints having max degree $n$. 
In this section we will look at the $k$-matching sequence for wheel graphs. 

Let $W_n$ be a wheel graph on $n$ vertices, that is a graph formed by connecting every vertex of a $n-1$ cycle to a single universal vertex. Label the edges of the cycle with $c_0,...,c_{n-2}$ and inner edges by $\ell_0, \ell_1, \dots, \ell_{n-2}$, where $\ell$ is used to symbolize ``leg'' edges, such that $c_i$ shares a vertex with $\ell_{i-1}$ and $\ell_{i}$ modulo $n-1$. See Figure~\ref{fig:wheel_label}.


\begin{figure}[h]
\begin{center}
\begin{tikzpicture}[scale = .75]

\filldraw (-3,3) circle (0.1cm);
\filldraw (-3,0) circle (0.1cm);
\filldraw (1,3) circle (0.1cm);
\filldraw (1,0) circle (0.1cm);
\filldraw (-1,1.5) circle (0.1cm);

\node at (-2,2) {$\ell_0$};
\node at (-0.25, 2.35) {$\ell_1$};
\node at (0.1, 1) {$\ell_2$};
\node at (-1.5,0.75) {$\ell_3$};

\node at (-3.2,1.5) {$c_0$};
\node at (-1, 3.2) {$c_1$};
\node at (1.2, 1.5) {$c_2$};
\node at (-1,-0.2) {$c_3$};

\draw (1,0) -- (-1,1.5) -- (-3,3) -- (-3,0) -- (1,0) -- (1,3) -- (-1,1.5) -- (-3,0) ;
\draw (-3,3) -- (1,3);

\end{tikzpicture}
\end{center}
\caption{$W_5$ and the labeling used in Theorems \ref{thm:wheel_1} and \ref{thm:wheel_2}.}
\label{fig:wheel_label}
\end{figure}
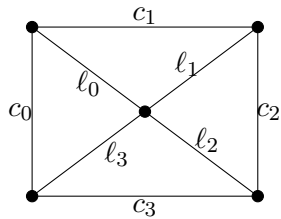


We will determine the homotopy type of the $1$-matching complex and $2$-matching complex of wheel graphs. In the proof of Theorem~\ref{thm:wheel_1}, we will first focus on the ``legs'' or spokes of the wheel and then on the outer cycle. In \cite{Kozlov_trees}, Kozlov proves the following proposition which will come in handy. 

\begin{proposition}[Kozlov,~\cite{Kozlov_trees} Proposition 5.2] \label{thm:Independence_Cycle}
For $n \geq 1$, let $C_n$ denote the cycle of length $n$. The homotopy type of the independence complex of the cycle graph is
\[
Ind(C_n) \simeq 
\begin{cases}
S^{\nu_n} \vee S^{\nu_n} & n \equiv 0 \text{ mod } 3\\
 S^{\nu_n} & n \not\equiv 0 \text{ mod } 3.
\end{cases}
\]
where $\nu_n = \lceil \frac{n-4}{3} \rceil$. 

\end{proposition}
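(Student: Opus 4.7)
The plan is to prove this by a parallel induction on $n$, simultaneously establishing the homotopy type of $Ind(P_n)$ for paths. Specifically, I will use the companion formula that $Ind(P_n)$ is contractible when $n \equiv 1 \pmod{3}$ and $Ind(P_n) \simeq S^{\lceil (n-3)/3 \rceil}$ otherwise. Both formulas flow from the classical cofibration at a vertex: for any vertex $v$ of a graph $G$, the faces of $Ind(G)$ containing $v$ form a cone on $Ind(G \setminus N[v])$, the faces avoiding $v$ form $Ind(G \setminus v)$, and the two pieces overlap on $Ind(G \setminus N[v])$. Hence $Ind(G)$ is homotopy equivalent to the mapping cone of the inclusion $Ind(G \setminus N[v]) \hookrightarrow Ind(G \setminus v)$. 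For $G = C_n$ with any vertex $v$, this produces the cofiber sequence $Ind(P_{n-3}) \hookrightarrow Ind(P_{n-1}) \to Ind(C_n)$; applying the same device at a leaf of a path yields the recursion that proves the path formula.

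With the path formula in hand, I split by $n \bmod 3$. When $n = 3k+1$, $Ind(P_{n-3}) = Ind(P_{3(k-1)+1})$ is contractible, so the mapping cone deformation retracts onto $Ind(P_{n-1}) \simeq S^{k-1} = S^{\nu_n}$. When $n = 3k+2$, the roles reverse: $Ind(P_{n-1}) = Ind(P_{3k+1})$ is contractible and the mapping cone is homotopy equivalent to the suspension $\Sigma Ind(P_{n-3}) \simeq \Sigma S^{k-1} = S^k = S^{\nu_n}$. The subtle case is $n = 3k$, where $Ind(P_{n-3}) \simeq S^{k-2}$ and $Ind(P_{n-1}) \simeq S^{k-1}$ are both genuine spheres; here the attaching map $S^{k-2} \to S^{k-1}$ is nullhomotopic by cellular approximation (since $\pi_{k-2}(S^{k-1}) = 0$ for $k \geq 2$), so the mapping cone splits as $S^{k-1} \vee \Sigma S^{k-2} = S^{k-1} \vee S^{k-1} = S^{\nu_n} \vee S^{\nu_n}$. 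The degenerate base case $k=1$, i.e.\ $C_3$, consists of three mutually adjacent vertices, so $Ind(C_3)$ is three isolated points, visibly $S^0 \vee S^0$.

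The main obstacle is the $n \equiv 0 \pmod{3}$ case, where the wedge of two spheres appears precisely because the cofiber sequence does not collapse trivially: neither the fiber nor the base is contractible, so one must argue that the attaching map of the cone cell is nullhomotopic rather than simply discarding one side. The argument is ultimately dimensional, but it requires tracking dimensions accurately through the induction so that cellular approximation applies, and handling the degenerate $k = 1$ step by direct inspection.
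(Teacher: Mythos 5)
This statement is quoted from Kozlov's work and the paper offers no proof of its own, so the only meaningful comparison is with the standard argument, which is exactly what you reconstruct: the star/link decomposition $Ind(G)=Ind(G\smallsetminus v)\cup_{Ind(G\smallsetminus N[v])}\bigl(v\ast Ind(G\smallsetminus N[v])\bigr)$, read as a mapping cone, together with the path formula and a dimension count showing the attaching map $S^{k-2}\to S^{k-1}$ is nullhomotopic when $n=3k$. Your treatment of the cycle step is correct in all three residue classes, including the identification of the mapping cone of a nullhomotopic map with $X\vee\Sigma A$ and the direct check for $C_3$; the numerology with $\nu_n=\lceil (n-4)/3\rceil$ also checks out.

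There is one genuine wrinkle, in the part you wave through: ``applying the same device at a leaf of a path yields the recursion that proves the path formula.'' If you literally pivot at a leaf $v_1$ of $P_n$, you get the mapping cone of $Ind(P_{n-2})\hookrightarrow Ind(P_{n-1})$, and in the residue class $n\equiv 1\pmod 3$ both of these are spheres of the \emph{same} dimension $k-1$. Dimensional reasoning then tells you nothing: a nullhomotopic inclusion would give $S^{k-1}\vee S^{k}$ rather than a point, so to close the induction you would have to prove that this inclusion is a homotopy equivalence (a degree computation), which your sketch does not supply. The standard fix is to pivot at the vertex $v_2$ adjacent to the leaf instead: then $P_n\smallsetminus v_2$ is the disjoint union of $\{v_1\}$ with $P_{n-2}$, so $Ind(P_n\smallsetminus v_2)=v_1\ast Ind(P_{n-2})$ is a cone, the mapping cone collapses to a suspension, and one obtains $Ind(P_n)\simeq\Sigma\, Ind(P_{n-3})$ outright; the path formula then follows from the base cases $P_1,P_2,P_3$ with no analysis of attaching maps. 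With that substitution your argument is complete and is essentially Kozlov's.
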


\begin{theorem} \label{thm:wheel_1}
Let $W_n$ be a wheel graph on $n$ vertices. Then, for $k \in \mathbb{N}$, the homotopy type of $M_1(W_n)$ is given by: 
\[
M_1(W_n) \simeq 
\begin{cases}
S^{\nu_n} \vee S^{\nu_n} & n \equiv 1 \text{ mod } 3\\
\bigvee\limits_{n-2} S^{\nu_n} & n \equiv 2 \text{ mod } 3\\
\bigvee\limits_{n} S^{\nu_n}  & n \equiv 0 \text{ mod } 3
\end{cases}
\]
where $\nu_n = \lceil \frac{n-4}{3} \rceil$.
\end{theorem}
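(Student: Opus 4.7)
The plan is to compute $M_1(W_n)=Ind(L(W_n))$ by applying the Matching Tree Algorithm to $L(W_n)$. In $L(W_n)$ the cycle edges $c_0,\ldots,c_{n-2}$ form a cycle on $n-1$ vertices, the spokes $\ell_0,\ldots,\ell_{n-2}$ form a clique $K_{n-1}$ (they all share the hub), and each $\ell_i$ is adjacent to exactly two cycle-edge vertices, namely $c_i$ and $c_{i+1}$.

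I would use $\ell_0,\ell_1,\ldots,\ell_{n-2}$ as tentative pivots in order. After tentatively pivoting on $\ell_i$, at the right child one has $A=\{\ell_i\}$ and $B$ already contains every other spoke together with $c_i$ and $c_{i+1}$, so the remaining vertex set $\{c_j:j\notin\{i,i+1\}\}$ induces a path $P_{n-3}$ in $L(W_n)$. Running MTA on this branch therefore produces critical cells of the form $\{\ell_i\}\cup\sigma$ where $\sigma$ ranges over critical cells of $Ind(P_{n-3})$. After all $n-1$ tentative spoke pivots are exhausted, the remaining left branch is $\Sigma(\emptyset,\{\ell_0,\ldots,\ell_{n-2}\})=Ind(C_{n-1})$, whose critical cells are counted directly by Proposition \ref{thm:Independence_Cycle}.

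The homotopy type then splits into three cases according to $n\bmod 3$, using the standard fact that $Ind(P_m)$ is contractible if $m\equiv 1\pmod 3$, $S^{(m-2)/3}$ if $m\equiv 2\pmod 3$, and $S^{(m-3)/3}$ if $m\equiv 0\pmod 3$. When $n\equiv 1\pmod 3$ the path $P_{n-3}$ is contractible and each spoke branch contributes no critical cells, while the cycle branch (with $n-1\equiv 0\pmod 3$) contributes two critical cells of dimension $\nu_n=\nu_{n-1}$, yielding $S^{\nu_n}\vee S^{\nu_n}$. When $n\equiv 0\pmod 3$ each spoke branch contributes one critical cell of dimension $\nu_n$ and the cycle branch contributes one more in the same dimension, giving $\bigvee_n S^{\nu_n}$. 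When $n\equiv 2\pmod 3$ each spoke branch contributes a critical cell of dimension $\nu_n$ while the cycle branch contributes one of dimension $\nu_n-1$; after cancelling this lone lower-dimensional cell against one of the $n-1$ spoke cells one is left with $n-2$ critical cells in dimension $\nu_n$, giving $\bigvee_{n-2}S^{\nu_n}$.

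The main obstacle will be the cancellation step in the case $n\equiv 2\pmod 3$, since MTA on its own does not pair up critical cells in consecutive dimensions. The cleanest resolution is probably to refine the tentative pivot inside the cycle branch so that the unique $(\nu_n-1)$-critical cell of $Ind(C_{n-1})$ is Morse-paired with one specific spoke critical cell before both become final; alternatively, one can compute the Morse boundary map $\mathbb{Z}^{n-1}\to\mathbb{Z}$ explicitly and exhibit an entry of $\pm 1$, which permits a single Morse cancellation. In all three cases, once the count is correct, the skeleton below the top dimension is contractible, so the top-dimensional attaching maps are nullhomotopic and the resulting CW complex of critical cells is exactly the claimed wedge of spheres.
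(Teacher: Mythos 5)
Your overall strategy is exactly the one the paper uses: run the MTA on $L(W_n)$ with the spokes $\ell_0,\dots,\ell_{n-2}$ as successive tentative pivots, observe that each right branch reduces to a path on $n-3$ cycle vertices contributing at most one critical cell, and leave $Ind(C_{n-1})$ as the final left branch. Your dimension bookkeeping in all three congruence classes is correct and agrees with the paper's.

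The gap is the cancellation in the case $n\equiv 2\pmod 3$, which you correctly flag as the main obstacle but do not resolve, and neither of your two proposed fixes works as stated. The first --- refining the tentative pivot inside the cycle branch so that the $(\nu_n-1)$-cell gets Morse-paired with a spoke cell ``before both become final'' --- cannot succeed, because the matching produced by the MTA only ever pairs cells lying in the same node $\Sigma(A,B)$ of the matching tree; the offending $(\nu_n-1)$-cell lives in the cycle branch and the spoke cells live in other branches, so no choice of pivots will pair them. The second --- exhibiting a $\pm 1$ entry in the Morse boundary map --- computes homology, but to cancel two critical cells at the level of homotopy type you need either a unique gradient path between them (Forman cancellation) or an appeal to simple connectivity plus Whitehead, and a $\pm1$ incidence number alone guarantees neither. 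The paper's resolution is different and concrete: it observes that the spoke-branch critical cell $\alpha=\{\ell_{n-2},c_{f(1)},\dots,c_{f(k)}\}$ and the cycle-branch critical cell $\beta=\{c_{f(1)},\dots,c_{f(k)}\}$ satisfy $\beta\prec\alpha$ with $\alpha$ a facet, adjoins the single pair $(\beta,\alpha)$ to the existing acyclic matching by hand, and verifies that the enlarged matching is still acyclic using the linear-extension criterion of Theorem~\ref{thm:linear_extension} (one shows any matched pair above $\beta$ is incomparable to $\alpha$, so the linear extension can be rearranged to place $\beta$ and $\alpha$ consecutively). You would need to supply an argument of this kind --- or a genuine unique-gradient-path count --- to close the $n\equiv 2\pmod 3$ case.
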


\begin{proof}
The strategy of this proof will be to apply the Matching Tree Algorithm on the line graph of $W_n$, see Figure~\ref{fig:linegraph_W_5}. The line graph of $W_n$, denoted $LW_n$ is given by a complete graph on $n-1$ vertices, labeled $\ell_0, \dots, \ell_{n-2}$ and an $(n-1)$-cycle graph $c_0, \dots, c_{n-2}$ with the additional edges $\{c_j, \ell_{j-1}\}$ and $\{c_j, \ell_j \}$ where $j$ is calculated modulo $n-1$. We derive the homotopy type of $M_1(W_n)$ by defining an acyclic (discrete Morse) matching on the face poset of the independence complex of $LW_n$ using the Matching Tree Algorithm.


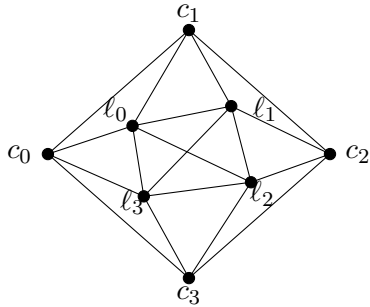
\begin{figure}[h]
\begin{center}
\begin{tikzpicture}[scale = .75]

\draw (-3.5,1.5) -- (-1,3.7) -- (1.5,1.5) -- (-1,-0.7) -- (-3.5,1.5);
\draw (-2,2) -- (-1.8, 0.75) -- (0.1,1) -- (-0.25, 2.35) -- (-1.8,0.75);
\draw(-0.25,2.35) -- (-2,2) -- (0.1,1); 
\draw(-2,2) -- (-1,3.7) -- (-0.25, 2.35) -- (1.5,1.5) -- (0.1,1) -- (-1,-0.7) -- (-1.8, 0.75) -- (-3.5, 1.5) -- (-2,2); 

\filldraw (-3.5,1.5) circle (0.1cm);
\filldraw (-1, 3.7) circle (0.1cm);
\filldraw (1.5, 1.5) circle (0.1cm);
\filldraw (-1,-0.7) circle (0.1cm);

\filldraw (-2,2) circle (0.1cm);
\filldraw (-0.25, 2.35) circle (0.1cm);
\filldraw (0.1, 1) circle (0.1cm);
\filldraw (-1.8,0.75) circle (0.1cm);

\node at (-2.3,2.3) {$\ell_0$};
\node at (0.35, 2.3) {$\ell_1$};
\node at (0.3, 0.8) {$\ell_2$};
\node at (-2,0.65) {$\ell_3$};

\node at (-4,1.5) {$c_0$};
\node at (-1, 4) {$c_1$};
\node at (2, 1.5) {$c_2$};
\node at (-1,-1) {$c_3$};

\end{tikzpicture}
\end{center}
\caption{$LW_5$, the line graph of $W_5$.}
\label{fig:linegraph_W_5}
\end{figure}


Let $P$ denote the face poset of $Ind(L(W_n))$. 
To begin we start with a tentative pivot $\ell_0$ which gives rise to $2$ children $\Sigma(\emptyset; \ell_0)$ and $\Sigma(\ell_0; \ell_1, \dots, \ell_{n-2}, c_0, c_1)$. We first address the right child  $\Sigma(\ell_0; \ell_1, \dots, \ell_{n-2}, c_0, c_1)$. The elements of $V \smallsetminus (A \cup B)$ are $c_2, c_3, \dots c_{n-2}$. Since $c_2$ has exactly $1$ neighbor in $V\smallsetminus (A \cup B)$, use $c_2$ as a pivot leading to $1$ child $\Sigma(\ell_0,c_3; \ell_1, \dots, \ell_{n-2}, c_0,c_1, c_2, c_4)$ where $c_3$ is the matching vertex. Continue in this fashion consecutively choosing the pivot $c_{f(2)}, c_{f(3)}, \dots, c_{f(k)}$ where $3k < n-1$ and $f(i) = j + 3i$ mod $n-1$, with $j$ the index on the tentative pivot of this branch, namely the index of $\ell_j$. At the moment, $j = 2$.  

Notice $\frac{n-1}{3}$ is the number of groups of $3$ that we can break the $(n-1)$ cycle into, where each group consists of $1$ pivot and $2$ neighbors of that pivot. Hence, for $n \equiv 0$ mod $3$ and $n \equiv 2$ mod $3$ $\frac{n-1}{3}$ is not a whole number meaning that all vertices in the outer cycle are either in $A$ or $B$ at the time we reach $c_{f(k)}$. Therefore $\ell_0, c_{f(1)}, c_{f(2)}, \dots, c_{f(k)}$ is the single critical cell of this branch. 

When $n \equiv 1$ mod $3$, $\frac{n-1}{3}$ is a whole number and we have a group of $3$ left over when we reach $c_{f(k)}$, $2$ of which are already in $A$. Therefore, we have an isolated vertex and an empty leaf results, i.e., there are no critical cells of this branch. 

Now, turning our attention to $\Sigma(\emptyset, \ell_0)$ we iterate this process using $\ell_1$ as our tentative vertex. Due to the symmetry of $LW_n$, each branch beginning with $\Sigma(\ell_j, N(\ell_j))$ will either result in an empty leaf or a single critical cell as described above. The general structure of our matching tree can be seen in Figure~\ref{fig:general_structure}.


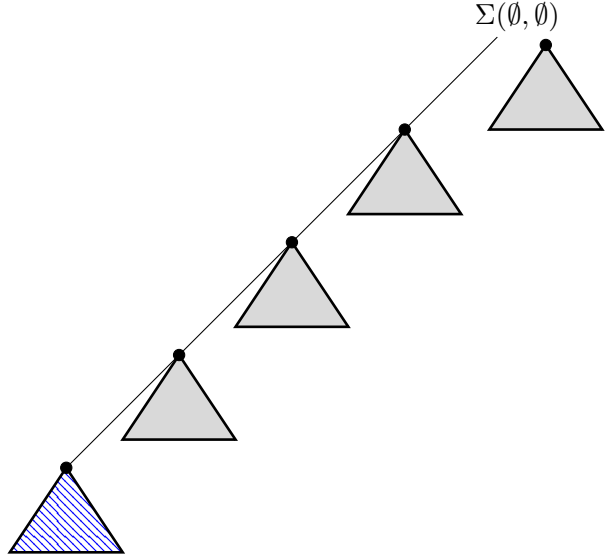
\begin{figure}[h]
\begin{center}
\begin{tikzpicture}[scale = .75]

\draw (3,3) -- (-5,-5);
\fill[pattern=north west lines, pattern color=blue] (-5,-5) -- (-6,-6.5) -- (-4,-6.5) -- (-5,-5);
\draw[line width = 1pt] (-5,-5) -- (-6,-6.5) -- (-4,-6.5) -- (-5,-5);

\fill[thick, gray, opacity=.3] (-3,-3) -- (-2,-4.5) -- (-4,-4.5) -- (-3,-3);
\draw[line width = 1pt] (-3,-3) -- (-2,-4.5) -- (-4,-4.5) -- (-3,-3);

\fill[thick, gray, opacity=.3] (-1,-1) -- (0,-2.5) -- (-2,-2.5) -- (-1,-1);
\draw[line width = 1pt] (-1,-1) -- (0,-2.5) -- (-2,-2.5) -- (-1,-1);

\fill[thick, gray, opacity=.3] (1,1) -- (0,-0.5) -- (2,-0.5) -- (1,1);
\draw[line width = 1pt] (1,1) -- (0,-0.5) -- (2,-0.5) -- (1,1);

\fill[thick, gray, opacity=.3] (3.5,2.5) -- (2.5,1) -- (4.5,1) -- (3.5,2.5);
\draw[line width = 1pt] (3.5,2.5) -- (2.5,1) -- (4.5,1) -- (3.5,2.5);
\filldraw (3.5,2.5) circle (0.1cm);

\filldraw (-5,-5) circle (0.1cm);
\filldraw (-3,-3) circle (0.1cm);
\filldraw (-1,-1) circle (0.1cm);
\filldraw (1,1) circle (0.1cm);
\filldraw[white] (3,3) circle (0.5cm);
\node at (3,3) {$\Sigma(\emptyset, \emptyset)$};

\end{tikzpicture}
\end{center}
\caption{The shaded branches have identical structure with the first element of each branch starting with $A = \emptyset$. The last stripped branch is representative of the outer cycle.}
\label{fig:general_structure}
\end{figure}


Once all vertices of the complete graph have been chosen as tentative vertices, we are left with $1$ child $\Sigma(\emptyset; \ell_0, \dots \ell_{n-2})$ and $V \smallsetminus (A\cup B)$ consists of only vertices on the outside cycle. When $n \equiv 0$ mod $3$, and $m=n-1 \equiv 2$ mod $3$, Proposition~\ref{thm:Independence_Cycle} states there exists a matching tree with $1$ critical cell of size $\nu_n +1$. Additionally, from each of the other branches we have critical cells of size $\nu_n +1$. By Theorem~\ref{thm:main_mta}, the homotopy type is a wedge of spheres, $M_1(W_n) \simeq \bigvee\limits_{n}S^{\nu_n}$ when $n \equiv 0$ mod $3$. 

When $n \equiv 1$ mod $3$, and $m = n-1 \equiv 0$ mod $3$, each of the branches resulting from vertices of the complete graph are empty.
Hence, $M_1(W_n) \simeq M_1(C_m) = Ind(C_m) \simeq S^{\nu_n}$. 

Finally, when $n \equiv 2$ mod $3$, and $m=n-1 \equiv 1$ mod $3$, a subtle shift occurs. Notice that $\nu_n = \nu_m +1$ when $m = n-1$ so Proposition~\ref{thm:Independence_Cycle} says we have $1$ critical cell of size $\nu_n$ and each of the $n-1$ branches gives rise to a critical cell of size $\nu_n +1$. We now argue that we can further match the cells $\alpha:= \{\ell_{n-2}, c_{f(1)}, \dots, c_{f(k)}\}$ and $\beta:=\{c_{f(1)}, \dots, c_{f(k)}\}$. We do so by showing that there exists a linear extension with $u(\beta) = \alpha$, which by Theorem~\ref{thm:linear_extension} gives us that there is an acyclic matching with $\alpha$ and $\beta$ paired, as desired.  

First note that $\{\ell_{n-2}, c_{f(1)}, \dots, c_{f(k)}\}$ is a facet in the independence complex of $LW_n$ for $n \equiv 2$ mod $3$ which means it is a maximal element of the face poset. Since $\beta \prec \alpha \in P$, $\beta$ is a coatom. 

We claim for any pair $(x, u(x))$ for which $\beta <_P x$ or $\beta <_Pu(x)$, i.e., $\beta <_{\mathcal{L}} (x, u(x))$, $\alpha$ is incomparable to $x$ and to $u(x)$. 
If $\beta \prec_P x \prec_P u(x)$, then $\alpha$ is incomparable to $x$ and incomparable to $u(x)$ since $\beta \prec \alpha$ and $\alpha$ is maximal. Suppose $\beta$ is incomparable to $x$ and $\beta <_P u(x)$. Since $\beta \prec \alpha$, $\alpha$ is incomparable to $u(x)$. Since $\beta$ is incomparable to $x$, $\beta < u(x)$, and $x \prec u(x)$ it must be that  $\beta \cup x \subseteq u(x)$. In addition, $\alpha$ and $\beta$ differ by $1$ element and if $x < \alpha$ this would mean $\alpha = \beta \cup x$ which is a contradiction to the incomparability of $u(x)$. 

This means that any pair $(x, u(x))$ in $\mathcal{L}$ such that $\beta < (x,u(x))$ can be moved above $\alpha$. The only concern is if there exists a pair of elements $(y,u(y))$ such that $(y,u(y)) <_{\mathcal{L}} \alpha$ and $(y,u(y)) >_{\mathcal{L}} (x,u(x))$ but this is not possible as this means $(y,u(y)) >_{\mathcal{L}} (x,u(x)) >_{\mathcal{L}} \beta$ and we have seen $(y,u(y))$ is incomparable to $\alpha$. 

Finally, we note that for any pair $(y,u(y))$ such that $ (y,u(y)) <_{\mathcal{L}} \alpha$, we have seen $\beta \not<_{\mathcal{L}} (y,u(y))$ and therefore it is either the case that $(y,u(y))$ is incomparable to $\beta$ or $(y,u(y)) <_{\mathcal{L}} \beta$. 

Hence, we can rearrange $\mathcal{L}$ so that $u(\beta) = \alpha$ which implies pairing $\alpha$ and $\beta$ forms an acyclic matching. It follows from Theorem \ref{thm:main_mta} that $M_1(W_n) \simeq \bigvee\limits_{n-2}S^{\nu_n}$. 
\end{proof}

The next theorem show that for $n \geq 6$, $M_2(W_n)$ is contractible. We need the following lemma \cite[ Lemma 4.3]{Jakob}: 
\begin{lemma} \label{lem:union_matching}
Let $\Delta_0$ and $\Delta_1$ be disjoint families of subsets of a finite set such that $\tau \nsubseteq \sigma$ if $\sigma \in \Delta_0$ and $\tau \in \Delta_1$. If $\mathcal{M}_i$ is an acyclic matching on $\Delta_i$ for $i=0,1$ then $\mathcal{M}_0 \cup \mathcal{M}_1$ is an acyclic matching on $\Delta_0 \cup \Delta_1$. 
\end{lemma}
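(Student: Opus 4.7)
The plan is to analyze how a hypothetical alternating cycle in $\mathcal{M}_0 \cup \mathcal{M}_1$ would be forced to stay inside a single $\Delta_i$, contradicting the assumed acyclicity of $\mathcal{M}_i$.

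First I would unpack what the hypothesis gives us. Since $\Delta_0$ and $\Delta_1$ are disjoint and each $\mathcal{M}_i \subseteq \Delta_i \times \Delta_i$, every element of $\Delta_0 \cup \Delta_1$ is matched by at most one of $\mathcal{M}_0, \mathcal{M}_1$, so $\mathcal{M}_0 \cup \mathcal{M}_1$ is at least a partial matching on the face poset $P$ of $\Delta_0 \cup \Delta_1$ ordered by inclusion. Every matched pair $(a, u(a))$ automatically satisfies $a, u(a) \in \Delta_i$ for the same $i$. The nontrivial content is acyclicity.

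Next I would translate the containment hypothesis into a statement about cover relations in $P$. The assumption ``$\tau \not\subseteq \sigma$ whenever $\sigma \in \Delta_0$ and $\tau \in \Delta_1$'' says precisely that there is no relation $\tau \preceq \sigma$ with $\sigma \in \Delta_0$, $\tau \in \Delta_1$. Equivalently, along any down-cover $b \succ a$ (so $a \subsetneq b$), if $b \in \Delta_0$ then $a \in \Delta_0$ as well, because $a \in \Delta_1$ would violate the hypothesis.

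Now suppose for contradiction there is a cycle
\[
a_1 \prec u(a_1) \succ a_2 \prec u(a_2) \succ \cdots \prec u(a_m) \succ a_1
\]
with $m \geq 2$ and all $a_i$ distinct. I would trace the types along the cycle. Each pair $(a_i, u(a_i))$ has both entries in a common $\Delta_{k_i}$. At each down-step $u(a_i) \succ a_{i+1}$, the preceding observation says that if $u(a_i) \in \Delta_0$ then $a_{i+1} \in \Delta_0$, and hence $u(a_{i+1}) \in \Delta_0$ as well. Applying this around the cycle, once any element lies in $\Delta_0$, every subsequent $a_j$ and $u(a_j)$ lies in $\Delta_0$; by the cyclic structure, the entire cycle lies in $\Delta_0$. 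Otherwise, every element of the cycle lies in $\Delta_1$. Either way, the cycle is an alternating cycle of $\mathcal{M}_i$ on $\Delta_i$ alone, contradicting the acyclicity of $\mathcal{M}_i$.

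The only mild subtlety, and what I would be most careful about, is the down-step argument: one must notice that the forbidden direction in the hypothesis is exactly the direction one needs (``no $\Delta_1$-set is a subset of a $\Delta_0$-set''), so an element of $\Delta_1$ cannot appear immediately below an element of $\Delta_0$ in the Hasse diagram. Once that observation is spelled out, the remainder of the argument is a direct case analysis, and no further machinery beyond Definition~\ref{def:partialmatching} is needed.
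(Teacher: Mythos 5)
Your argument is correct. Note that the paper does not actually prove this lemma---it is imported verbatim from Jonsson as \cite[Lemma 4.3]{Jakob}---so there is no in-paper proof to compare against; your cycle-propagation argument is the standard one and is essentially the proof in the cited source. The one step worth spelling out is the compatibility of cover relations between the subposets and their union: the same hypothesis $\tau \nsubseteq \sigma$ shows that a matched pair of $\mathcal{M}_i$ remains a cover relation in $\Delta_0 \cup \Delta_1$ (no element of the other family can lie strictly between the two sets of a pair), which is needed for $\mathcal{M}_0 \cup \mathcal{M}_1$ to be a partial matching in the sense of Definition~\ref{def:partialmatching} at all, and conversely a cover in $\Delta_0 \cup \Delta_1$ between two elements of $\Delta_i$ is trivially a cover in $\Delta_i$, which is what lets your restricted cycle contradict the acyclicity of $\mathcal{M}_i$. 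Both observations are one-liners from the containment hypothesis, so this is a presentational omission rather than a mathematical gap.
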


\begin{theorem} \label{thm:wheel_2}
Let $W_n$ be a wheel graph on $n$ vertices. Then, for $k \in \mathbb{N}$, the homotopy type of $M_2(W_n)$ is given by:
\[
M_2(W_n) \simeq 
\begin{cases}
S^2 \vee S^2 \vee S^2 & n = 4\\
S^3 \vee S^3 & n=5\\
\text{pt} & n \geq 6.
\end{cases}
\]
\end{theorem}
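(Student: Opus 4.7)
The proof splits into three cases according to the size of $n$, and in each the plan is to apply discrete Morse theory (Theorem~\ref{thm:main_mta}) to the face poset $P = \mathcal{F}(M_2(W_n))$. The tools developed earlier (bridges from Theorem~\ref{thm:bridge}, induced claw units, wedge decompositions) cannot be invoked directly because every vertex of $W_n$ has degree at least $3$ and $W_n$ has no leaves, so the matchings must be constructed from scratch.

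For $n \geq 6$, the plan is to exhibit an acyclic partial matching on $P$ with the empty set as its only critical cell. I would begin by toggling on the spoke $\ell_0$: this pairs every face $a$ with $\ell_0 \notin a$ for which $a \cup \{\ell_0\}$ is still a $2$-matching. The remaining unmatched faces split into two families, namely (A) those containing both cycle edges $c_{n-2}, c_0$ at the endpoint $v_0$ (saturating $v_0$), and (B) those containing at least two spokes $\ell_i, \ell_j$ distinct from $\ell_0$ (saturating the hub). I then iterate: toggle on $\ell_1$, then $\ell_2$, and so on, restricting each successive matching to the unmatched subposet from the previous step and using the Patchwork Theorem (Theorem~\ref{thm:patchwork}) to assemble everything into one acyclic matching on $P$. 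Because $n-1 \geq 5$, there are enough spokes and enough cycle edges that at every stage the family (B) can be killed by toggling on a further spoke, and a concluding sweep of toggles on the cycle edges $c_0, c_1, \dots$ eliminates family (A).

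For $n = 4$ we have $W_4 = K_4$, so $M_2(K_4)$ is the simplicial complex on the six edges of $K_4$ whose minimal non-faces are the four vertex-stars $K_{1,3}$. I would enumerate the maximal faces (the three Hamiltonian $4$-cycles as $3$-simplices, whose pairwise intersections are the three perfect matchings of $K_4$ and whose triple intersection is empty, together with the four $2$-simplices coming from the triangles of $K_4$) and then either build an explicit acyclic matching with three critical $2$-cells or compute reduced homology directly to conclude $M_2(K_4) \simeq S^2 \vee S^2 \vee S^2$. For $n = 5$ a parallel but larger direct analysis of the eight-edge complex, again via a toggle-based acyclic matching, produces exactly two critical $3$-cells and hence $M_2(W_5) \simeq S^3 \vee S^3$; here the finite symmetry group of $W_5$ (cyclic of order $4$) should help organize the case analysis.

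The main obstacle is the bookkeeping in the case $n \geq 6$. Each toggle step shrinks the unmatched subposet but also makes it combinatorially irregular, so one must check (i) that each individual toggling is acyclic (automatic by Lemma~\ref{lem:toggle}), (ii) that the sequence of subposets used in successive toggles fits into an order-preserving map to an auxiliary poset so that the Patchwork Theorem actually delivers a global acyclic matching, and (iii) that nothing is missed in the final accounting so that the empty face is the only critical cell. Cases $n = 4, 5$ are essentially finite computations and should present no serious difficulty once the right matching is written down.
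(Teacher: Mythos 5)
Your overall strategy (iterated toggling assembled by the Patchwork Theorem) is in the same spirit as the paper, but the specific mechanism you describe for $n\geq 6$ has a genuine gap. The paper does not toggle on spokes at all: it first toggles on the cycle edge $c_0$ (which perfectly matches everything that contains $c_0$ or can absorb it), then on $c_2$ over the faces containing $\{c_1,\ell_0\}$ or $\{c_{n-2},\ell_{n-2}\}$, using the observation that every maximal $2$-matching of $W_n$ contains $c_0$, $\{c_1,\ell_0\}$, or $\{c_{n-2},\ell_{n-2}\}$. What survives is a union of four disjoint subposets with explicit minimal elements, each of which is then collapsed by one or two further cycle-edge toggles. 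This reduction to a small, explicitly described residue is the engine of the proof, and it works uniformly for $n\geq 6$ with almost no bookkeeping.

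The concrete problem with your plan is the claim that family (B) -- the unmatched faces containing two spokes $\ell_i,\ell_j$ with $i,j\neq 0$ -- ``can be killed by toggling on a further spoke.'' It cannot: such a face saturates the hub, so no spoke can ever be added to it, and the only candidate spoke pairing is downward, $a\mapsto a\smallsetminus\{\ell_i\}$. But $a\smallsetminus\{\ell_i\}$ contains only one spoke besides $\ell_0$, hence (unless it happens to contain both cycle edges at the endpoint of $\ell_0$) it was already matched upward with $\ell_0$ at the first step, so the downward pairing is unavailable. For example, with $a=\{\ell_2,\ell_4\}$ and $n\geq 6$, no spoke toggle in your sequence ever matches $a$; it survives until your ``concluding sweep'' on cycle edges, as do many larger faces in family (B). Whether that final sweep matches all of them, does so consistently with the earlier pairings, and fits into an order-preserving map to an auxiliary poset so that the Patchwork Theorem applies, is precisely the content you would need to supply -- and it is not obviously true for an arbitrary ordering of the cycle-edge toggles. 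Your descriptions of the $n=4$ and $n=5$ cases are sound in outline (the face enumeration of $M_2(K_4)$ you give is correct, and the reduced Euler characteristic check is consistent with $S^2\vee S^2\vee S^2$), but they are finite computations you have not actually carried out.
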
  

\begin{proof}
Let $P_n$ be the face poset of $M_2(W_n)$. See figure~\ref{fig:wheel_label} for an example of the labeling of $W_n$.
Our strategy will be to define acyclic matchings on subposets of $P_n$ and then apply Theorem~\ref{thm:patchwork}. 
Define $Q_n$ to be a poset on the elements $\{\mathbf{c_0, c_2, \mathcal{R}}\}$ given by the relations $\mathbf{c_0} \prec \mathbf{c_2} \prec \mathbf{\mathcal{R}}$. The target elements in $Q_n$ are in bold to differentiate them from vertices of $W_n$. 
Now, we define the poset map $\Gamma_n: P_n \rightarrow Q_n$ by defining the preimage $\Gamma_n^{-1}(\alpha)$ for each $\alpha \in Q_n$.

\begin{itemize}

\item For $n=4$ let $\Gamma_n^{-1}(\mathbf{\mathcal{R}}) := \{ \{c_1,c_2,\ell_2\}, \{c_2,\ell_2\}, \{c_2,\ell_2, \ell_0\}, \{c_1,\ell_0, \ell_1\}, \{c_2,\ell_1, \ell_2\}\}$.

\item For $n\geq 5$ let $\Gamma_n^{-1}(\mathbf{\mathcal{R}}) := \{m \in M_2(W_n) | \{c_1,\ell_0, \ell_1\} \subseteq m \text{ or } \{c_1,\ell_0, c_3, \ell_2\} \subseteq m \text{ or }$\\
 $\{c_{n-2},\ell_{n-2}, c_1,\ell_1\}$ $\subseteq m$ $\text{ or }$ $\{c_{n-2},\ell_{n-2},\ell_2\} \subseteq m \}. $
\item  $\Gamma_n^{-1}(\mathbf{c_2}) := \{m \in M_2(W_n) | \{c_1, \ell_0\} \subseteq m \text{ or } \{c_{n-2}, \ell_{n-2}\} \subseteq m\} \smallsetminus \Gamma_n^{-1}(\mathbf{\mathcal{R}})$
\item $\Gamma_n^{-1}(\mathbf{c_0}) = \{m \in M_2(W_n) | \{c_0\} \subseteq m \text{ or } m \cup \{c_0\} \in M_2(W_n) \}. $
\end{itemize}
Since every maximal $2$-matching of $W_n$ either contains $c_0$, $\{c_1,\ell_0\}$, or $\{c_{n-2}, \ell_{n-2}\}$, elements of $P_n$ have been assigned an image under $\Gamma_n$ and, by definition, $\Gamma_n$ is order-preserving poset map. 
For the preimages $\Gamma_n^{-1}(\mathbf{c_0})$ and $\Gamma_n^{-1}(\mathbf{c_2})$ perform a toggle on $c_0$ and $c_2$, respectively. That is, for each $\sigma \in \Gamma_n^{-1}(\alpha)$ that does not contain $\alpha$, pair $\sigma$ with $ \sigma \cup \{\alpha\}$. By Lemma~\ref{lem:toggle}, these matchings are acyclic. In addition, both of these toggles result in a perfect (discrete Morse) matching.
Notice that what remains are the elements of $\Gamma_n^{-1}(\mathbf{\mathcal{R}})$ which is a set of disjoint subposets for $n \geq 5$ where each of the sets $\{c_1,\ell_0, \ell_1\}, \{c_1,\ell_0, c_3, \ell_2\}, \{c_{n-2},\ell_{n-2}, c_1,\ell_1\}$, and $\{c_{n-2},\ell_{n-2}, c_3,\ell_2\}$ are the unique minimal elements of the respective subposets. Since the (poset) join between any $2$ of these elements would contain more than $2$ leg edges, which is not possible in a $2$-matching, these posets are pairwise disjoint. 

\underline{Claim}: Each subposet either consists of $1$ element or is associated to a contractible subcomplex for $n \geq 4$. 

Recall that any subset of edges in a disjoint union of paths forms a $2$-matching. Each of the sets  $\{c_1,\ell_0, \ell_1\}, \{c_1,\ell_0, c_3, \ell_2\}, \{c_{n-2},\ell_{n-2}, c_1,\ell_1\}$, and $\{c_{n-2},\ell_{n-2}, c_3\ell_2\}$ contains at most $2$ leg edges and $2$ cycle edges. Hence the possible edges that we union with any of these elements to form a $2$-matching form a disjoint union of paths when $n \geq 6$. When $n \geq 7$, toggling on $\mathbf{c_4}$ will pair away all of the remaining cells since $\mathbf{c_4}$ can be in any $2$-matching containing the sets $\{c_1,\ell_0, \ell_1\}, \{c_1,\ell_0, c_3, \ell_2\}, \{c_{n-2},\ell_{n-2}, c_1,\ell_1\}$, and $\{c_{n-2},\ell_{n-2}, c_3\ell_2\}$. For $n = 6$, toggles can be made with $\mathbf{c_1, c_3}$ and $\mathbf{c_4}$. Therefore, by Lemma~\ref{lem:union_matching}, $M_2(W_n) \simeq \text{ pt}$ when $n \geq 6$. 

When $n=5$, $\Gamma_5^{-1}(\mathcal{R}) = \{\{c_1,\ell_1,\ell_0\},  \{c_1, \ell_0, \ell_1, c_4\}, \{ c_1,\ell_0, \ell_1, c_3\},$ 
$\{ c_1, \ell_0, \ell_1, c_3, c_4\},$ $\{c_1,\ell_0,\ell_2,c_3\},$ $\{ \ell_0, c_1, \ell_2, c_3, c_4\}, \{c_4, \ell_4, c_1, \ell_1\}, \{ c_4, \ell_4, c_3, \ell_2\}, \{c_4, \ell_4, c_3, \ell_2, c_1\} \}$. Toggling on $\mathbf{c_1}$ and $\mathbf{c_3}$ leaves $2$ critical $3$-cells, namely $\{c_1,\ell_0,\ell_2,c_3\}, \{c_1,\ell_1,\ell_3,c_3\}$. Hence, $M_2(W_5) \simeq S^3 \vee S^3$. 
When $n=4$, $\Gamma_4^{-1}(\mathbf{\mathcal{R}}) := \{ \{c_1,c_2,\ell_2\}, \{c_2,\ell_2\}, \{c_2,\ell_2, \ell_0\}, \{c_1,\ell_0, \ell_1\}, \{c_2,\ell_1, \ell_2\}\}$ and toggling on $\mathbf{c_1}$ leaves $3$ critical $2$-cells $\{c_1,\ell_0,\ell_1\}, \{c_2,\ell_1,\ell_2\},$ and $\{c_2,\ell_2,\ell_0\}$. Hence, $M_2(W_4) = S^2 \vee S^2 \vee S^2$. 

\end{proof}

Since $M_3(W_n)$ is contractible, we have the $k$-matching sequence of $W_4$ is $(S^{\nu_n} \vee S^{\nu_n}; S^2 \vee S^2 \vee S^2; \text {pt })$ and for $W_5$ is $(\underset{n-2}{\vee} S^{\nu_n}; S^3 \vee S^3; \text{ pt })$ where $\nu = \lceil\frac{n-4}{3} \rceil$.


\section{Caterpillar graphs}\label{sec:cat}

A \emph{caterpillar graph} is a tree in which every vertex is on a central path or only $1$ edge away from the path. A \emph{perfect $m$-caterpillar of length $n$}, denoted $G_n$ is a caterpillar graph with $m$ legs at each vertex on the central path of $n$ vertices (see Figure~\ref{fig:perfect_cat}). We conclude the paper with a derivation of $2$-matching complexes of perfect $m$-caterpillar graphs.


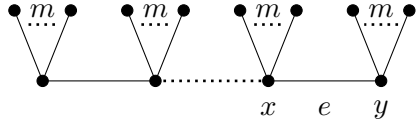
\begin{figure}[h]
\begin{center}
\begin{tikzpicture}[scale = .75]

\draw (-5.5,1.25) -- (-5,0) -- (-4.5,1.25);
\draw (-3.5,1.25) -- (-3,0) -- (-2.5,1.25);
\draw (-1.5,1.25) -- (-1,0) -- (-0.5,1.25);
\draw (0.5,1.25) -- (1,0) -- (1.5,1.25);
\draw (-5,0) -- (-3,0); 
\draw[dotted, line width = 1pt] (-3,0) -- (-1,0); 
\draw (-1,0) -- (1,0);

\draw[dotted, line width = 1pt](-5.25,1) -- (-4.7,1); 
\draw[dotted, line width = 1pt](-3.25,1) -- (-2.7,1); 
\draw[dotted, line width = 1pt](-1.25,1) -- (-0.7,1); 
\draw[dotted, line width = 1pt](1.3,1) -- (0.7,1);

\filldraw (-5,0) circle (0.1cm);
\filldraw (-3,0) circle (0.1cm);
\filldraw (-1,0) circle (0.1cm);
\filldraw (1,0) circle (0.1cm);
\filldraw (-5.5, 1.25) circle (0.1cm); 
\filldraw (-4.5, 1.25) circle (0.1cm); 
\filldraw (-3.5, 1.25) circle (0.1cm); 
\filldraw (-2.5, 1.25) circle (0.1cm);
\filldraw (-1.5, 1.25) circle (0.1cm);
\filldraw (-0.5, 1.25) circle (0.1cm); 
\filldraw (0.5, 1.25) circle (0.1cm); 
\filldraw (1.5, 1.25) circle (0.1cm);

\node at (-5,1.25) {$m$};
\node at (-3,1.25) {$m$};
\node at (-1,1.25) {$m$};
\node at (1,1.25) {$m$};

\node at (-1,-0.5) {$x$};
\node at (1,-0.5) {$y$};
\node at (0,-0.5) {$e$};

\end{tikzpicture}
\end{center}
\caption{A perfect $m$-caterpillar or length $n$.}
\label{fig:perfect_cat}
\end{figure}


In \cite{JelicEtAl}, Jeli\'c Milutinovi\'c et. al. calculate the homotopy type of $M_1(G_n)$ using topological techniques. 

\begin{theorem} \cite[Theorem 5.4]{JelicEtAl}\label{thm:main_M1_cat}
For $m \geq 2$, let $G_n$ be a perfect $m$-caterpillar graph of length $n \geq 1$. Then the homotopy type of $M_1(G_n)$ is given by: 
\begin{equation}
\label{eq:Tn}
    M_1(G_n) \simeq 
    \begin{cases}
        \bigvee\limits_{t=0}^{k} \bigvee\limits_{\alpha_t} S^{k-1+t} & \text{if } n = 2k \\
        \bigvee\limits_{t=0}^k \bigvee\limits_{\beta_t} S^{k+t} & \text{if } n = 2k+1
       \end{cases}
\end{equation} 
where $\alpha_t = \binom{k+t}{k-t}(m-1)^{2t}$ and $\beta_t = \binom{k+1+t}{k-t}(m-1)^{2t+1}$.
\end{theorem}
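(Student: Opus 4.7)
The plan is to induct on $n$, with base cases $n=0,1$ handled directly (taking $M_1(G_0) \simeq S^{-1}$ so that $\Sigma M_1(G_0) \simeq S^0$, and $M_1(G_1) = M_1(K_{1,m})$ is $m$ discrete points, i.e., $\bigvee_{m-1} S^0$). For the inductive step, cut off the last spine vertex $v_n$ together with its $m$ legs by analyzing the status of the final spine edge $e = \{v_{n-1}, v_n\}$. Decompose $M_1(G_n) = A \cup B$, where $A = M_1(G_n - e)$ is the subcomplex of matchings avoiding $e$ and $B$ is the closed star of the vertex $e$ in $M_1(G_n)$ (the subcomplex of faces containing $e$). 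Then $B$ is a cone with apex $e$, hence contractible, and the intersection $A \cap B$ consists of matchings compatible with $e$ but not containing it, i.e., matchings of $G_n$ that avoid both $v_{n-1}$ and $v_n$. Deleting these two spine vertices leaves $G_{n-2}$ together with $2m$ isolated leaves that contribute no edges, so $A \cap B = M_1(G_{n-2})$. On the other side, $G_n - e$ is the disjoint union of $G_{n-1}$ with a star $K_{1,m}$ centered at $v_n$, so $A = M_1(G_{n-1}) * M_1(K_{1,m})$. Since $M_1(K_{1,m})$ is $m$ discrete points and, for any CW complex $X$, the join $X * (m\text{ points}) \simeq \bigvee_{m-1} \Sigma X$ (collapse one of the $m$ cones on $X$), we obtain $A \simeq \bigvee_{m-1} \Sigma M_1(G_{n-1})$.

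Because $B$ is contractible, $M_1(G_n)$ is homotopy equivalent to the mapping cone of the inclusion $\iota : M_1(G_{n-2}) \hookrightarrow A$. The key claim is that $\iota$ is null-homotopic. It factors as $M_1(G_{n-2}) \hookrightarrow M_1(G_{n-1}) \hookrightarrow M_1(G_{n-1}) * M_1(K_{1,m})$, and inside $M_1(G_{n-1})$ every matching of $G_{n-2}$ avoids the vertex $v_{n-1}$, so $M_1(G_{n-2})$ sits inside the closed star of any leg edge at $v_{n-1}$, a cone and therefore contractible. The mapping cone of a null-homotopic map $f : X \to Y$ is homotopy equivalent to $Y \vee \Sigma X$, so
\[
M_1(G_n) \;\simeq\; \bigvee_{m-1} \Sigma M_1(G_{n-1}) \;\vee\; \Sigma M_1(G_{n-2}).
\]

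Plugging in the inductive hypothesis reduces the result to tracking binomial coefficients dimension by dimension. In the even case $n = 2k$, a sphere $S^{k-1+t}$ with $1 \le t \le k-1$ receives $(m-1)\binom{k+t-1}{k-t}(m-1)^{2t-1}$ copies from $\bigvee_{m-1} \Sigma M_1(G_{n-1})$ and $\binom{k+t-1}{k-1-t}(m-1)^{2t}$ copies from $\Sigma M_1(G_{n-2})$; Pascal's rule $\binom{k+t-1}{k-t} + \binom{k+t-1}{k-t-1} = \binom{k+t}{k-t}$ collapses the sum to exactly $\alpha_t = \binom{k+t}{k-t}(m-1)^{2t}$. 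The boundary terms $t=0$ and $t=k$ receive a single contribution each and match $\alpha_0 = 1$ and $\alpha_k = (m-1)^{2k}$ directly; the odd case $n = 2k+1$ is entirely analogous and reduces by the same Pascal identity to $\beta_t = \binom{k+1+t}{k-t}(m-1)^{2t+1}$. The main obstacle is the null-homotopy step: one must point to a specific leg edge at $v_{n-1}$ and invoke Lemma~\ref{lem:colim} (or a direct mapping-cone splitting) to guarantee that the cofiber of a null-homotopic inclusion indeed splits as a wedge. Once this is in place, the remaining bookkeeping is mechanical, with a small amount of care required for $n \le 2$ where the $(-1)$-sphere convention for the empty graph enters.
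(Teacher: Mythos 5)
This statement is imported from \cite{JelicEtAl} (Theorem 5.4); the paper gives no proof of its own, so there is nothing to match your argument against line by line. That said, your proof is correct, and it is the natural analogue of the technique the paper itself uses for the $M_2$ versions in Lemmas~\ref{lem:BD_homotopy} and~\ref{lem:M2_homotopy}: split along the last spine edge $e$ into the subcomplex $A$ of matchings avoiding $e$ and the closed star $B$ of $e$, identify $A\cap B$ with $M_1(G_{n-2})$ and $A$ with $M_1(G_{n-1})\ast M_1(K_{1,m})\simeq \bigvee_{m-1}\Sigma M_1(G_{n-1})$, collapse the contractible cone $B$, and split the resulting cofiber. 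Your key observation — that the inclusion $M_1(G_{n-2})\hookrightarrow M_1(G_{n-1})$ lands inside the closed star of a leg edge at $v_{n-1}$ and is therefore null-homotopic, so the cofiber splits as $A\vee\Sigma M_1(G_{n-2})$ — is exactly right and gives the recursion $M_1(G_n)\simeq\bigvee_{m-1}\Sigma M_1(G_{n-1})\vee\Sigma M_1(G_{n-2})$; I checked that the Pascal-rule bookkeeping then reproduces $\alpha_t$ and $\beta_t$ in both parities, including the boundary terms $t=0$ and $t=k$ and the degenerate cases $n\le 2$ with the $S^{-1}$ convention. Two cosmetic points: your parenthetical description of $B$ as ``the subcomplex of faces containing $e$'' is not literally a subcomplex — what you use (and should say) is the closed star $\{\sigma : \sigma\cup\{e\}\in M_1(G_n)\}$; and the cofiber-of-a-null-homotopic-map splitting is cleanest to justify via \cite[Proposition 4G.1]{Hatcher} applied to the pushout diagram, rather than via Lemma~\ref{lem:colim}, which as stated only covers wedges.
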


As we will now see the homotopy type of $M_2(G_n)$ is also a wedge of spheres.

\begin{definition} 
Let $G_n$ be a perfect $m$-caterpillar of length $n$ with the right most edge along the central path $e = \{x_0,x_1\}$, where $y$ is the endpoint of the central path. Define $BD(G_n)$ as the simplicial complex whose vertices are given by edges in $G_n$ and faces are given by subgraphs $H$ of $G_n$ such that the deg$(x_1) \leq 1$ and the degree of any other vertex is at most $2$ in $H$. 
\end{definition} 

 In order to obtain the $2$-matching complex of $G_n$, we will inductively use the bounded degree complex $BD(G_{n-1})$ to build up to $M_2(G_n)$. Namely, our progression will be: 
\[ 
M_2(G_{n-1}) \rightarrow BD(G_n) \rightarrow M_2(G_n) \rightarrow BD(G_{n+1}) \rightarrow M_2(G_{n+1}) \rightarrow \dots. 
\]
Notice that the only difference between $BD(G_{n})$ and $M_2(G_n)$ is the possible degree of the last vertex on the central path. This will allow us to build an inductive argument on $m$-perfect caterpillar graphs.  

For a simplicial complex $\Delta$, let $\Sigma_m(\Delta)$ denote the $m$-point suspension of $\Delta$, that is $\Delta$ join a set of $m$ discrete points.
\begin{lemma} \label{lem:BD_homotopy}
$BD(G_n) \cong \Sigma_m(M_2(G_{n-1})) \vee \Sigma(BD(G_{n-1}))$ 
\end{lemma}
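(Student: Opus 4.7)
The plan is to identify $BD(G_n)$ as the union of two subcomplexes and then recognize the resulting union as a mapping cone of a null-homotopic inclusion. Let $e=\{x_0,x_1\}$ be the rightmost central edge and $\ell_1,\dots,\ell_m$ the leaves at $x_1$; these are all edges of $G_n$ incident to the constrained vertex $x_1$. Partition the faces of $BD(G_n)$ by which of $\{e,\ell_1,\dots,\ell_m\}$ they contain. Since $\deg_F(x_1)\le 1$, at most one such edge appears. The three cases are: (a) $F\cap\{e,\ell_1,\dots,\ell_m\}=\emptyset$; (b) $F$ contains exactly one leaf $\ell_i$ and not $e$; (c) $F$ contains $e$ and no $\ell_i$. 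In cases (a) and (b), the edges of $F$ lying in $G_{n-1}$ form any $2$-matching of $G_{n-1}$, since every vertex of $G_{n-1}$ (including $x_0$) is only bounded by $2$. In case (c), $\deg_{F\setminus\{e\}}(x_0)\le 1$, so $F\setminus\{e\}\in BD(G_{n-1})$.

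With this classification, set
\[
A=\Sigma_m(M_2(G_{n-1}))=M_2(G_{n-1})\ast\{\ell_1,\dots,\ell_m\},\qquad B=\{e\}\ast BD(G_{n-1}).
\]
Cases (a) and (b) exhaust the faces of $A$, while case (c) together with the automatically included $BD(G_{n-1})\subseteq M_2(G_{n-1})$ gives all faces of $B$. Their intersection consists of those faces that do not contain $e$ yet are also in $B$, which is exactly $BD(G_{n-1})$. Hence
\[
BD(G_n)=A\cup_{BD(G_{n-1})} B,
\]
and since $B$ is the cone with apex $e$ over $BD(G_{n-1})$, this identifies $BD(G_n)$ as the mapping cone of the inclusion $\iota:BD(G_{n-1})\hookrightarrow \Sigma_m(M_2(G_{n-1}))$.

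Next, I will show that $\iota$ is null-homotopic. The inclusion factors as
\[
BD(G_{n-1})\hookrightarrow M_2(G_{n-1})\hookrightarrow \Sigma_m(M_2(G_{n-1})).
\]
The second arrow is null-homotopic because $\Sigma_m(M_2(G_{n-1}))$ contains the subcone $M_2(G_{n-1})\ast\{\ell_1\}$, in which $M_2(G_{n-1})$ sits as the base; a cone is contractible, so any inclusion of its base into it is null-homotopic. Therefore $\iota$ is null-homotopic as well. Applying the standard fact that the mapping cone of a null-homotopic map $f:X\to Y$ is homotopy equivalent to $Y\vee \Sigma X$ (which follows from the cofibration long exact/Puppe sequence, or concretely by collapsing the contractible cone attached to a point, cf.\ Lemma~\ref{lem:colim} and \cite[Section~4.G]{Hatcher}), we conclude
\[
BD(G_n)\simeq \Sigma_m(M_2(G_{n-1}))\vee \Sigma(BD(G_{n-1})).
\]

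The main obstacle I anticipate is the bookkeeping in step one: being careful that in cases (a) and (b) the face really is an arbitrary element of $M_2(G_{n-1})$ (not of $BD(G_{n-1})$), while in case (c) the removal of $e$ forces the stricter condition on $x_0$, landing us in $BD(G_{n-1})$. Once the decomposition $A\cup B$ with the stated intersection is established, identifying the cone attachment as a mapping cone and invoking the null-homotopy of the inclusion through the cone on $\{\ell_1\}$ is the quick topological finish.
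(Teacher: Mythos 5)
Your proof is correct and follows essentially the same route as the paper: both decompose $BD(G_n)$ by whether a face contains $e$, obtaining $\Sigma_m(M_2(G_{n-1}))$ glued to the cone $e \ast BD(G_{n-1})$ along $BD(G_{n-1})$, and both then exploit the contractibility of $BD(G_{n-1})$ inside the $m$-point suspension (via a cone over one suspension point) to split off the wedge summand $\Sigma(BD(G_{n-1}))$. Your phrasing in terms of the mapping cone of a null-homotopic inclusion is just the standard reformulation of the paper's collapse of $e \ast BD(G_{n-1})$ followed by $X/A \simeq X \vee \Sigma A$.
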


\begin{proof} 
Let $m$ denote the number of legs off of each vertex along the central path as seen in Figure \ref{fig:perfect_cat}. 
For a bounded degree complex $BD(G_n)$, let $e = \{x_0, x_1\}$ be the right most edge along the central path and consider subgraphs $H$ such that deg$(x_1) \leq 1$ in $H$. We can decompose these bounded degree subgraphs into those that contain $e$ and those that do not. Namely, if we exclude $e$, the bounded degree graphs are given by $M_2(G_{n-1}) \ast M_1(St_m)$ where $St_m$ is a star graph on $m$ edges,  and if we include $e$ the bounded degree subgraphs are given by $e \ast BD(G_{n-1})$. These $2$ complexes share $BD(G_{n-1})$ as a common subcomplex and hence 
\[
BD(G_n) \cong M_2(G_{n-1}) \ast M_1(St_m) \underset{BD(G_{n-1})}{\bigcup} e \ast BD(G_{n-1}).
\]
Since $e \ast BD(G_{n-1})$ is a contractible space we get 
\[
BD(G_n) \cong M_2(G_{n-1}) \ast M_1(St_m) \underset{BD(G_{n-1})}{\bigcup} e \ast BD(G_{n-1})/e \ast BD(G_{n-1}) \cong \Sigma_m(M_2(G_{n-1}))/BD(G_{n-1}).
\]
Since $BD(G_{n-1}) \subseteq M_2(G_{n-1})$ we see that  $BD(G_{n-1})$ is contractible in $\Sigma_m(M_2(G_{n-1}))$. Hence, 
\[ BD(G_n) \simeq \Sigma_m(M_2(G_{n-1})) \vee \Sigma(BD(G_{n-1})).\] 
\end{proof}

\begin{lemma}\label{lem:M2_homotopy}
$M_2(G_n) \cong M_2(G_{n-1}) \ast M_2(St_m) \vee \Sigma(\Sigma_m(BD(G_{n-1})))$
\end{lemma}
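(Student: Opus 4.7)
The approach is to mirror the decomposition used in Lemma~\ref{lem:BD_homotopy}, now allowing the rightmost central-path vertex $x_1$ to have degree up to $2$. Writing $e = \{x_0, x_1\}$ for the rightmost central edge, I would split $M_2(G_n)$ as the union of the subcomplex $A$ of 2-matchings not containing $e$ and the subcomplex $B$ of 2-matchings containing $e$.

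In $A$, the vertex $x_1$ can absorb up to $2$ of the $m$ legs at $x_1$ while the rest of the graph forms a 2-matching of $G_{n-1}$, giving $A = M_2(G_{n-1}) \ast M_2(St_m)$. In $B$, since $\deg(x_1) = 1$ from $e$ at most one further leg at $x_1$ may appear, and since $\deg(x_0) = 1$ from $e$ the portion restricted to $G_{n-1}$ satisfies the bounded-degree constraint at $x_0$, so it lies in $BD(G_{n-1})$. Hence $B = e \ast M_1(St_m) \ast BD(G_{n-1})$, which is a cone with apex $e$ and is therefore contractible. The intersection $A \cap B$ consists of 2-matchings avoiding $e$, using at most one leg at $x_1$, and restricting to $BD(G_{n-1})$ on $G_{n-1}$; this is exactly $M_1(St_m) \ast BD(G_{n-1}) = \Sigma_m(BD(G_{n-1}))$.

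Since $B$ is contractible, collapsing it yields $M_2(G_n) \simeq A / (A \cap B)$. To convert this quotient into the desired wedge $A \vee \Sigma(A \cap B)$, I will argue that $A \cap B$ is null-homotopic inside $A$. The complex $M_2(St_m)$ consists of all subsets of at most two edges of the star and is thus the 1-skeleton of an $(m-1)$-simplex, hence path-connected, so the $m$ discrete points comprising $M_1(St_m)$ can be contracted to a single point in $M_2(St_m)$; taking the join of this null-homotopy with the identity on $BD(G_{n-1})$ (and then including into $M_2(G_{n-1}) \ast M_2(St_m)$) shows that $M_1(St_m) \ast BD(G_{n-1})$ is null-homotopic in $A$, because an inclusion that factors up to homotopy through a point gives, after joining with $Y$, an inclusion factoring through the cone $\text{pt} \ast Y$. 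Applying the standard cofibration fact that $X / Y \simeq X \vee \Sigma Y$ when $Y \hookrightarrow X$ is null-homotopic then yields $M_2(G_n) \simeq M_2(G_{n-1}) \ast M_2(St_m) \vee \Sigma(\Sigma_m(BD(G_{n-1})))$, as claimed.

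The main obstacle I anticipate is being precise about the degree bookkeeping so that the decomposition $M_2(G_n) = A \cup B$ is genuinely a decomposition of simplicial complexes with exactly the stated intersection, and verifying cleanly that the null-homotopy of $M_1(St_m)$ in $M_2(St_m)$ extends across the join to the full ambient space $M_2(G_{n-1}) \ast M_2(St_m)$. Once these combinatorial and point-set details are pinned down, the topological portion is a direct upgrade of the argument used in Lemma~\ref{lem:BD_homotopy}.
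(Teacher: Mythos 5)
Your proposal is correct and follows essentially the same route as the paper: decompose $M_2(G_n)$ by whether a $2$-matching contains the rightmost central edge $e$, identify the pieces as $M_2(G_{n-1}) \ast M_2(St_m)$ and the contractible cone $e \ast BD(G_{n-1}) \ast M_1(St_m)$ glued along $BD(G_{n-1}) \ast M_1(St_m)$, collapse the cone, and apply $X/Y \simeq X \vee \Sigma Y$ for a null-homotopic inclusion. Your explicit verification that $M_1(St_m) \ast BD(G_{n-1})$ is null-homotopic in the ambient join (by contracting the $m$ points of $M_1(St_m)$ inside the connected complex $M_2(St_m)$ and joining with the identity) is a welcome elaboration of a step the paper states only briefly.
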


\begin{proof} 
Let $m$ be the number of legs off each vertex of the central path as seen in Figure \ref{fig:perfect_cat}.
For the $2$-matching complex $M_2(G_n)$, let $e = \{x_0, x_1\}$ be the right most edge along the central path and consider $2$-matchings of $G_n$. Following the argument analogously to Lemma~\ref{lem:BD_homotopy} we can decompose these $2$-matchings into those that contain $e$ and those that do not. Hence, 
\[
M_2(G_n) \cong M_2(G_{n-1}) \ast M_2(St_m) \underset{BD(G_{n-1}) \ast M_1(St_m)}{\bigcup} e \ast BD(G_{n-1}) \ast M_1(St_m). 
\]
Since $e \ast BD(G_{n-1}) \ast M_1(St_m)$ is contractible, we obtain 
\[ 
M_2(G_n) \cong M_2(G_{n-1}) \ast M_2(St_m) / BD(G_{n-1}) \ast M_1(St_m).
\] 
Further, since $BD(G_{n-1}) \subseteq M_2(G_{n-1})$ and $ M_1(St_m) \subseteq M_2(St_m)$ we get that  $BD(G_{n-1}) \ast M_1(St_m) \subseteq M_2(G_{n-1}) \ast M_2(St_m)$ is contractible and
$M_2(G_n) \cong M_2(G_{n-1}) \ast M_2(St_m) \vee \Sigma(\Sigma_m(BD(G_{n-1})))$. 
\end{proof}

\begin{theorem}\label{thm:main_M2_cat}
Let $G_n$ denote a perfect $m$-caterpillar graph of length $n$ with $m \geq 2$. Then, 
\begin{itemize}
\item[(i)]  the homotopy type of $BD(G_n)$ and $M_2(G_n)$ are wedges of spheres of varying dimensions for all $n \geq 1$,
\item[(ii)] the total number of spheres in $BD(G_{i+1})$ and $M_2(G_{i+1})$ is given by the coefficient of $t^i$ in the series
\[
\sum\limits_{i \geq 0} \mathcal{A}_it^i = \sum\limits_{j \geq 0} \mathcal{B}_jt^j = \frac{x}{1-(1+y)t - (x^2 -y)t^2} 
\]
where $x = (m-1)$ and $y = \binom{m-1}{2}$, and 
\item[(iii)] $M_2(G_i) \simeq \bigvee\limits_{j \geq 0} \underset{\beta_{i,j}}{\vee} S^{i+j}$ where $\beta_{i,j}$ the number of spheres of dimension $i +j$ is the coefficient of $r^it^j$ in $B(r,t,x,y) =\sum\limits_{i,j \geq 0} b_{i,j} r^it^j= \frac{x}{1-rt-(x^2-y)r^2t^3 -yrt^2}$ where $x = (m-1)$ and $y = \binom{m-1}{2}$. 
\end{itemize} 
\end{theorem}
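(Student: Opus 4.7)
The plan is to prove all three parts by simultaneous induction on $n$, leveraging the structural decompositions given in Lemmas~\ref{lem:BD_homotopy} and~\ref{lem:M2_homotopy} together with two elementary facts about wedges of spheres: for a wedge $X \simeq \bigvee_k S^{n_k}$ one has $\Sigma_m X \simeq \bigvee_k \bigvee_{m-1} S^{n_k+1}$, and for two such wedges the join satisfies $(\bigvee_i S^{p_i}) \ast (\bigvee_j S^{q_j}) \simeq \bigvee_{i,j} S^{p_i+q_j+1}$.

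First I would establish the base case $n=1$. Since $G_1 = St_m$, every $2$-matching contains at most two edges, so $M_2(St_m)$ is the $1$-skeleton of the $(m-1)$-simplex, i.e., $K_m$ viewed as a $1$-dimensional complex; its first Betti number is $\binom{m-1}{2}$, giving $M_2(G_1) \simeq \bigvee_{y} S^1$. Similarly, $BD(G_1) = M_1(St_m)$ is $m$ discrete points, i.e., $\bigvee_{x} S^0$. Part~(i) then follows by induction: inserting the inductive hypothesis into Lemmas~\ref{lem:BD_homotopy}--\ref{lem:M2_homotopy} and invoking the two facts above preserves the wedge-of-spheres property at each step.

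For part~(ii), let $\mathcal{A}_{n-1}$ and $\mathcal{B}_{n-1}$ denote the total sphere counts of $BD(G_n)$ and $M_2(G_n)$ respectively; using $M_2(St_m) \simeq \bigvee_{y} S^1$ together with the two lemmas, these satisfy
\begin{align*}
\mathcal{A}_n &= \mathcal{A}_{n-1} + x\,\mathcal{B}_{n-1}, \\
\mathcal{B}_n &= y\,\mathcal{B}_{n-1} + x\,\mathcal{A}_{n-1},
\end{align*}
with $\mathcal{A}_0 = x$ and $\mathcal{B}_0 = y$. Multiplying by $t^n$, summing, and eliminating one series from the resulting coupled system $(1-t)A(t) = x + xt\,B(t)$ and $(1-yt)B(t) = y + xt\,A(t)$ yields the claimed rational expression. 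For part~(iii) I would refine these counts by dimension: setting $\alpha_{n,d}$ and $\beta_{n,d}$ to be the numbers of $S^d$ summands in $BD(G_n)$ and $M_2(G_n)$, and tracking the dimension shifts (each $\Sigma$ or $\Sigma_m$ raises dimension by $1$, while the join with $M_2(St_m) \simeq \bigvee_y S^1$ raises it by $2$), gives
\begin{align*}
\alpha_{n,d} &= \alpha_{n-1,d-1} + x\,\beta_{n-1,d-1}, \\
\beta_{n,d} &= y\,\beta_{n-1,d-2} + x\,\alpha_{n-1,d-2},
\end{align*}
with $\alpha_{1,0} = x$ and $\beta_{1,1} = y$. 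Translating these into bivariate generating functions in $r$ (tracking the length index $n$) and $t$ (tracking the dimension offset) and solving the resulting algebraic system yields the closed form in the statement.

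The main obstacle will be the dimension bookkeeping in part~(iii): the two operations that enlarge $M_2(G_n)$---namely $\Sigma_m$ applied to $BD(G_{n-1})$, and the join with the $1$-sphere wedge $M_2(St_m)$---shift sphere dimensions by different amounts, so the rational function must simultaneously encode the recursion depth and the excess dimension above the baseline. Setting up the generating-function variables so that $r$ tracks the graph index while $t$ tracks the offset from the baseline, and then correctly reindexing when translating back to the statement's $\beta_{i,j}$, is the subtle step; I expect the closed form to drop out cleanly once the coupled bivariate system is eliminated, exactly mirroring the elimination in part~(ii).
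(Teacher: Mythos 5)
Your proposal is correct and follows essentially the same route as the paper: the same base cases $BD(G_1)=M_1(St_m)\simeq\bigvee_{m-1}S^0$ and $M_2(G_1)=M_2(St_m)\simeq\bigvee_{\binom{m-1}{2}}S^1$, induction through Lemmas~\ref{lem:BD_homotopy} and~\ref{lem:M2_homotopy} using the standard wedge/join/suspension identities, and the identical coupled recursions $\mathcal{A}_n=\mathcal{A}_{n-1}+x\mathcal{B}_{n-1}$, $\mathcal{B}_n=x\mathcal{A}_{n-1}+y\mathcal{B}_{n-1}$ and their dimension-graded refinements, solved by elimination in (bivariate) generating functions. The only caveat, which afflicts the paper's statement as much as your write-up, is that eliminating the coupled system gives $B(t)=\bigl(y+(x^2-y)t\bigr)/D(t)$ rather than $x/D(t)$, so $A(t)$ and $B(t)$ share a denominator but not a numerator.
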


\begin{proof}
(i) Since $BD(G_1) = M_1(St_m) \simeq \underset{(m-1)}{\vee} S^0$ and $M_2(G_1) = M_2(St_m) \simeq \underset{\binom{m-1}{2}}{\vee} S^1$, (i) follows from Lemmas~\ref{lem:BD_homotopy}, \ref{lem:M2_homotopy}, and \ref{lem:colim}.

(ii) Let $\mathcal{A}_i$ denote the total number of spheres in the homotopy type of $BD(G_{i+1})$ and $\mathcal{B}_i$ be the total number of spheres in the homotopy type of $M_2(G_{i+1})$. From Lemmas \ref{lem:BD_homotopy}, \ref{lem:M2_homotopy}, and \ref{lem:colim} we know $\mathcal{A}_0 = x := (m-1), \mathcal{B}_0 = y:= \binom{m-1}{2},$ and $\mathcal{A}$, $\mathcal{B}$ follow the recursions: 
\begin{equation}\label{eq:alpha}
\mathcal{A}_i = \mathcal{A}_{i-1} + x\mathcal{B}_{i-1} 
\end{equation}
\begin{equation} \label{eq:beta}
\mathcal{B}_i = x\mathcal{A}_{i-1} + y\mathcal{B}_{i-1}. 
 \end{equation}
 Using equations \ref{eq:alpha} and \ref{eq:beta}, we see that $\mathcal{A}_i = (1+y)\mathcal{A}_{i-1} + (x^2-y)\mathcal{A}_{i-2}$. 
 Let $A(t) = \sum\limits_{i \geq 0} \mathcal{A}_it^i$. Multiplying by $(1-(1+y)t - (x^2 -y)t^2)$ and solving we obtain
 \[
 A(t) = \frac{x}{1-(1+y)t + (x^2 -y)t^2}.
 \]
 The argument for $B(t) = \sum\limits_{j \geq 0} \mathcal{B}_it^i$ is analogous.  
 
 (iii) Let $\alpha_{i,j}$ be the total number of spheres of dimension $j$ in $BD(G_{i+1})$ and $\beta_{i,j}$ the total number of spheres of dimension $j$ in $M_2(G_{i+1})$. Using that $BD(G_1) \simeq \underset{(m-1)}{\vee}S^0$ and $M_2(G_1) \simeq \underset{\binom{m-1}{2}}{\vee} S^1$, and Lemmas \ref{lem:BD_homotopy}, \ref{lem:M2_homotopy}, and \ref{lem:colim} we obtain the following initial conditions 
 \[
 \alpha_{0,0} = x:= (m-1) 
 \]
 \[
 \beta_{0,1} = y := \binom{m-1}{2}
 \]
 \[
 \alpha_{0,j} = 0 \text{ for } j \geq 1
 \]
 \[
 \beta_{0,j} = 0 \text{ for } j \geq 2
 \]
 \[
 \alpha_{i,0} = 0 \text{ for } i \geq 1
 \]
 \[
 \beta_{i, 0} = 0 \text{ for } i \geq 0 
 \]
 Additionally, $\alpha_{i,j}$ and $\beta_{i,j}$ follow the recursions 
 \begin{equation}\label{eq:alpha_2}
 \alpha_{i,j} = \alpha_{i-1, j-1} + x(\beta_{i-1, j-1})  
 \end{equation}
 \begin{equation}\label{eq:beta_2}
 \beta_{i,j} = x\alpha_{i-1,j-2} + y\beta_{i-1,j-2}. 
 \end{equation} 
 Using equations \ref{eq:alpha_2} and \ref{eq:beta_2}, we can see that 
 \[
 \beta_{i,j} = \beta_{i-1, j-1} + (x^2 -y)(\beta_{i-2,j-3} + y(\beta_{i-2,j-3}). 
 \]
 Let $B(r,t,x,y) = \sum\limits_{i,j \geq 0}b_{i,j}r^it^j$ and multiply by $1-rt-(x^2-y)r^2t^3 -yrt^2$. When we solve and use the initial conditions we find that 
 \[
 B(r,t,x,y) = \frac{x}{1-rt-(x^2-y)r^2t^3-yrt^2}
 \] 
 and the result follows from substituting $(m-1)$ for $x$ and $\binom{m-1}{2}$ for $y$.  
\end{proof}

\begin{remark}
From Theorem~\ref{thm:main_M2_cat} (iii), notice that the number of spheres in each dimension is given by a polynomial in $x$ and $y$. If we set $x = y = 1$, we can see that the number of terms in the sum given by the coefficient of $r^it^j$ is a binomial coefficient: 
\[
B(r,t,1,1) = \frac{1}{1-rt(1+t)} = \sum\limits_{k \geq 0} r^kt^k (1+t)^k 
\]
and the coefficient of $[r^it^j] = \binom{i}{j-i}$. 

\end{remark}

\section{Future directions.} 
The original motivation for this project was to study $1$-matching complexes through the lens of $k$-matching complexes for $k \geq 2$. We end with a few open questions. Our exploration of $2$-matching complexes led to observations about the flexibility of the homotopy type and how the homotopy type of clawed non-separable graphs changes (or doesn't change) as new leaves are added. One avenue to explore with this problem involves understanding the interaction between clawed non-separable graphs and additional leaves. 
\begin{question}
Ranging over all clawed non-separable graphs, what is the average maximum number of leaves that can be added without affecting the homotopy type of the resulting $2$-matching complex? 
\end{question}

We have already seen that there are some graphs in which the maximum can be obtained and other graphs in where there is an obstruction to doing so. It would be interesting to know if clawed non-separable graphs tend to have structural properties that obstruct obtaining the maximum and can we expect the maximum number of leaves to be evenly distributed over all such graphs. 

We can also ask about properties of graphs more generally. 
\begin{question} 
Given a graph, how can we determine when leaves can be attached without affecting the resulting homotopy type of the 2-matching complex? 
\end{question} 

In Section~\ref{sec:wheel}, we defined the $k$-matching complex of a graph and explored $2$ examples, wheel graphs and perfect caterpillar graphs. Theorems~\ref{thm:main_M1_cat} and \ref{thm:main_M2_cat} show that the homotopy type of $M_1(G_n)$ and $M_2(G_n)$ are both wedges of spheres with combinatorial structure. A future direction of this work would be to further understand the $k$-matching complex of perfect caterpillar graphs and caterpillar graphs in general.

\begin{conjecture}
The $k$-matching complex of every caterpillar graphs are homotopy equivalent to a wedge of spheres. 
\end{conjecture}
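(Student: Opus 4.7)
The plan is to prove the conjecture by induction on $|V(G)|$, generalizing the decomposition of Lemmas~\ref{lem:BD_homotopy} and~\ref{lem:M2_homotopy} to arbitrary $k \geq 1$ and arbitrary (not necessarily perfect) caterpillars. The $k=1$ case is already known since the matching complex of a forest is a wedge of spheres (Marietti--Testa), so I focus on $k \geq 2$. To make the recursion close, I would strengthen the hypothesis to simultaneously handle bounded-degree variants: for any caterpillar $G$, any vertex $v \in V(G)$, and any $0 \leq j \leq k$, the complex
\[
X_k(G,v,j) := \bigl\{H \subseteq E(G) : \deg_H(v) \leq k-j \text{ and } \deg_H(w) \leq k \text{ for } w \neq v\bigr\}
\]
is homotopy equivalent to a wedge of spheres; note $X_k(G,v,0) = M_k(G)$. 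The base case is when $G$ is a star $St_m$: here $M_k(St_m)$ is the $(k-1)$-skeleton of $\Delta^{m-1}$, which is homotopy equivalent to $\bigvee_{\binom{m-1}{k}} S^{k-1}$ (or contractible if $k \geq m$), and the variants $X_k(St_m,v,j)$ are likewise skeleta of simplices.

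For the inductive step, let $v_1$ be an endpoint of the spine of $G$ with spine-neighbor $u$, set $e = \{v_1,u\}$, and let $St_{v_1}$ denote the star of legs at $v_1$. Write $G' = G \smallsetminus (\{v_1\} \cup \mathrm{legs}(v_1))$, a strictly smaller caterpillar. Partitioning $k$-matchings of $G$ by whether $e$ is included---and observing that if $e \in H$ then $u$ can contribute at most $k-1$ further edges in $G'$ and $St_{v_1}$ at most $k-1$ legs---yields
\[
M_k(G) \;=\; \bigl(M_k(St_{v_1}) * M_k(G')\bigr) \,\cup\, \bigl(e * M_{k-1}(St_{v_1}) * X_k(G',u,1)\bigr),
\]
glued along $M_{k-1}(St_{v_1}) * X_k(G',u,1)$. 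The right-hand piece is a cone, hence contractible, so collapsing it gives
\[
M_k(G) \;\simeq\; \bigl(M_k(St_{v_1}) * M_k(G')\bigr) \Big/ \bigl(M_{k-1}(St_{v_1}) * X_k(G',u,1)\bigr).
\]

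The key topological input is that the inclusion $M_{k-1}(St_m) \hookrightarrow M_k(St_m)$ is null-homotopic: the target is the $(k-1)$-skeleton of $\Delta^{m-1}$, hence $(k-2)$-connected, while the source has dimension at most $k-2$, so obstruction theory provides the null-homotopy. Joining with the identity on $X_k(G',u,1)$ transports this to a null-homotopy of the join inclusion $M_{k-1}(St_{v_1}) * X_k(G',u,1) \hookrightarrow M_k(St_{v_1}) * M_k(G')$ (since $f * \mathrm{id}$ factors through a cone whenever $f$ is null-homotopic). The standard identification $X/A \simeq X \vee \Sigma A$ then yields
\[
M_k(G) \;\simeq\; \bigl(M_k(St_{v_1}) * M_k(G')\bigr) \,\vee\, \Sigma\bigl(M_{k-1}(St_{v_1}) * X_k(G',u,1)\bigr).
\]
By induction both pieces are wedges of spheres, and since $S^p * S^q \simeq S^{p+q+1}$, joins, suspensions, and wedges of wedges of spheres are again wedges of spheres. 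An analogous decomposition, with the degree constraint at $w$ propagated through both factors, handles the strengthened statement for each $X_k(G,w,j)$.

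The main obstacle will be managing the strengthened hypothesis cleanly. When the distinguished vertex $w$ in $X_k(G,w,j)$ lies strictly between $v_1$ and the other spine endpoint, the decomposition above goes through unchanged; but when $w \in \{v_1, u\}$ or $w$ is itself a leg vertex, several subcases arise, and one must check that the recursion terminates on strictly smaller instances of the same claim. I also need to verify the null-homotopy argument in all degenerate situations (e.g.\ $k \geq m$ where $M_k(St_m)$ is already contractible, $v_1$ having no legs so $St_{v_1}$ is empty, or spine of length one where only the base case applies). This bookkeeping across cases, rather than any new topological idea, is where I expect the bulk of the work to lie.
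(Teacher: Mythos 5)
First, a point of order: this statement is a \emph{conjecture} that the paper leaves open, so there is no proof in the paper to compare against; I can only assess your proposal on its own terms. Your strategy is a direct generalization of the paper's inductive machinery for $k=2$ and perfect caterpillars (Lemmas~\ref{lem:BD_homotopy} and~\ref{lem:M2_homotopy}): the deletion/star decomposition along the terminal spine edge, collapsing the contractible star, and converting the resulting quotient into a wedge via a null-homotopic inclusion is exactly what those lemmas do, with your $X_k(G',u,1)$ playing the role of $BD(G_{n-1})$. The genuinely new ingredient you supply --- that $M_{k-1}(St_m)\hookrightarrow M_k(St_m)$ is null-homotopic because the source has dimension $k-2$ while the target, the $(k-1)$-skeleton of a simplex, is $(k-2)$-connected, and that joining a null-homotopic map with an inclusion stays null-homotopic since $CA\ast B=C(A\ast B)$ --- is correct and is the right replacement for the paper's ad hoc observation that $BD(G_{n-1})$ is contractible inside $\Sigma_m(M_2(G_{n-1}))$.

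The one substantive gap is in the closure of your strengthened hypothesis under the recursion. If the distinguished vertex $w$ of $X_k(G,w,j)$ lies strictly in the interior of the spine and you peel off the endpoint $v_1$ with spine-neighbour $u\neq w$, then the link of $e$ is not of the form $M_{k-1}(St_{v_1})\ast X_k(G',w,j)$: it is $M_{k-1}(St_{v_1})$ joined with the complex of subgraphs of $G'$ carrying degree constraints at \emph{two} vertices, $u$ and $w$ simultaneously, which is outside the class your hypothesis covers. So the case you describe as going ``through unchanged'' is precisely the one that fails as stated. The repair is not hard but must be made explicit: either upgrade the hypothesis to general bounded-degree complexes $BD^{\lambda}$ in Jonsson's sense (with the reduced degrees confined to a terminal segment of the spine), or, more economically, note that if you always peel from the spine end carrying the degree constraint, the only complexes the recursion ever produces are $M_k(G')$ and $X_k(G',u,1)$ with $u$ a spine endpoint of $G'$, so it suffices to quantify over spine endpoints only. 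With that repair, together with the degenerate-case checks you already flag (in particular, a legless spine endpoint should be re-absorbed as a leg of its neighbour so that the star factor in the deletion is nonempty and the cone argument for the null-homotopy applies), the argument appears to go through and would settle the conjecture.
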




\bibliographystyle{plain}
\bibliography{Vega}
\end{document}